\newtheorem{theorem}{Theorem}[section]
\newtheorem{lemma}[theorem]{Lemma}
\newtheorem{conjecture}[theorem]{Conjecture}
\theoremstyle{definition}
\newtheorem{notation}[theorem]{Notation}
\newtheorem{remark}[theorem]{Remark}
\newtheorem{example}[theorem]{Example}
\newcommand{\Lc}{\mathcal{L}}
\newcommand{\OO}{\mathcal{O}}
\newcommand{\C}{\mathscr{C}}
\def\Os{\mathscr{O}}
\newcommand{\PG}{\mathrm{PG}}
\newcommand{\EnG}{\mathrm{En}\Gamma}
\newcommand{\MM}{\mathbf{M}}
\newcommand{\Pf}{\mathbf{P}}
\newcommand{\F}{\mathbb{F}}
\newcommand{\A}{\mathfrak{A}}
\newcommand{\db}{\displaybreak[3]}
\def\t{\text}
\begin{document}
\title{Orbits of the class $\mathcal{O}_6$ of lines external with respect to the twisted cubic in $\mathrm{PG}(3,q)$
\thanks{The research of S. Marcugini and F. Pambianco was supported in part by the Italian
National Group for Algebraic and Geometric Structures and their Applications (GNSAGA -
INDAM) (Contract No. U-UFMBAZ-2019-000160, 11.02.2019) and by University of Perugia
(Project No. 98751: Strutture Geometriche, Combinatoria e loro Applicazioni, Base Research
Fund 2017-2019).}
}
\date{\vspace{-5ex}}
\maketitle
\begin{center}
{\sc Alexander A. Davydov}\\
 \emph{E-mail address:} alexander.davydov121@gmail.com\medskip\\
 {\sc Stefano Marcugini and
 Fernanda Pambianco }\\
 {\sc\small Department of  Mathematics  and Computer Science,  Perugia University,}\\
 {\sc\small Perugia, 06123, Italy}\\
 \emph{E-mail address:} \{stefano.marcugini, fernanda.pambianco\}@unipg.it
\end{center}

\begin{abstract}
In the projective space $\mathrm{PG}(3,q)$, we consider orbits of
lines under the stabilizer group of the twisted cubic. In the
literature, lines of $\mathrm{PG}(3,q)$ are
partitioned into classes, each of which is a union of line
orbits. We propose an
approach to obtain orbits of the class named $\mathcal{O}_6$, whose
complete classification is an
open problem. For all even and odd $q$ we describe  a
family of orbits of $\mathcal{O}_6$ and their stabilizer groups.
The orbits of this family include an essential part of all $\mathcal{O}_6$ orbits. 



 \end{abstract}

\section{Introduction}

In the three-dimensional projective space $\PG(3,q)$ over a Galois field $\F_{q}$ with $q$ elements, the normal rational curve $\mathscr{C}$,  named twisted cubic, has as many as $q+1$ points. Up to a change of the projective frame of $\PG(3,q)$, these points are  $P_t=(t^3,t^2,t,1)$, $t\in \F_{q}$, together with $P_\infty=(1,0,0,0)$. In particular, they form a complete $(q+1)$-arc in $\PG(3,q)$. This is a well known, relevant property which has already proved to be useful not only from a theoretical point of view but also for practical applications in cryptography; see for instance \cite{BonPolvTwCub,BrHirsTwCub,CasseGlynn82,CasseGlynn84,CKS,CosHirsStTwCub,GiulVincTwCub,KorchLanzSon,ZanZuan2010}.
A novel application of twisted cubic aimed at the construction of covering codes has been the motivation for the study of certain submatrices of the point-plane incidence matrix of $PG(3,q)$ arising from the action of the stabilizer group $G_q\cong \mathrm{PGL}(2,q)$ of $\mathscr{C}$ in $\PG(3,q)$.
The investigation, based on the known classification of the point and plane orbits of $G_q$ given in \cite{Hirs_PG3q}, was initiated by D. Bartoli and the present authors in 2020 \cite{BDMP-TwCub} and produced optimal multiple covering codes. The results in \cite{BDMP-TwCub} were also an important ingredient to classify the cosets of the $[ q+1, q-3,5]_q 3$ generalized doubly-extended Reed-Solomon code of codimension $4$ by means of their weight distributions \cite{DMP_RSCoset}.

For the study of the plane-line and the point-line incidence matrices, an explicit description of line orbits is useful. In \cite{Hirs_PG3q}, a partition of the lines in $\PG(3,q)$ into classes is given, each of which is a union of line orbits under $G_q$; see also Section 2. Apart from one class denoted by $\OO_6$, the number and the structure of the orbits forming those unions are independently considered by distinct methods in
 \cite{DMP_OrbLine, DMP_OrbLine2, DMP_OrbLine_sub}  (for all $q\ge2$),  \cite[Section 7]{BlokPelSzo} (for all $q\ge23$), and \cite{GulLav}  (for finite fields of characteristic $> 3$); see also the references therein. The results on the line orbits from \cite{BlokPelSzo,DMP_OrbLine,GulLav} are in accordance with each other. They (together with ones from \cite{BrHirsTwCub,Hirs_PG3q}) are collected in \cite[Section~2.2, Table 1]{DMP_PointLineInc}. In  \cite{DMP_PointLineInc}   the relations between       \cite{DMP_OrbLine, DMP_PlLineInc, DMP_PointLineInc} and  \cite{GulLav} are described.

The class $\OO_6$  contains lines external to the twisted cubic such that they are not chords of the cubic and do not lie in its osculating planes. The complete classification of the line orbits in $\OO_6$ constitutes an open problem.

Using the representation of the line orbits in \cite{DMP_OrbLine}, for all $q\ge2$ and apart from the lines in class $\OO_6$, the \emph{plane-line} incidence matrix of $\PG(3,q)$ is given in \cite{DMP_PlLineInc} and  the  \emph{point-line} incidence matrix of $\PG(3,q)$ is obtained in \cite{DMP_PointLineInc}.
For $\OO_6$, in \cite{DMP_PointLineInc,DMP_PlLineInc}, the corresponding average and cumulative values are calculated.

In \cite{GulLav}, apart from the lines in class $\OO_6$, for odd $q\not\equiv0\pmod3$ the numbers of distinct planes through distinct lines (called ``the plane orbit distribution of a line") and the numbers of distinct points lying on distinct lines
(called ``the point orbit distribution of a line") in of $\PG(3,q)$ are obtained.
For finite fields of characteristic $> 3$, the results of \cite{GulLav} on ``the plane orbit distribution of a line" and ``the point orbit distribution of a line'' are in accordance with those from \cite{DMP_PointLineInc,DMP_PlLineInc} on the plane-line and the point-line incidence matrices.
  
In  $\PG(3,q)$, for even $q=2^n$, $n\ge3$
the $(q+1)$-arc    $\mathcal{A}=\{(1,t,t^{2^h},t^{2^h+1})|t\in\F_q\} \cup
{(0,0,0,1)}$ with $gcd(n,h)=1$ (twisted cubic for $h=1$), has been considered  in a recent paper \cite{CePa},
where it is shown that the orbits of points and of planes
under the projective stabilizer $G_h$ of $\mathcal{A}$ are similar to those under $G_1$ described in \cite{Hirs_PG3q}; moreover, the point-plane incidence matrix with
respect to $G_h$-orbits  mirrors the case h=1 described in  \cite{BDMP-TwCub}. In \cite{CePa}, it is also proved that for even $q$,  $q\equiv\xi\pmod3$, $\xi \in \{1,-1\}$, $G_h$ has $2q+7+\xi$  orbits on lines, providing a proof of a conjecture of ours \cite[Conjecture 8.2]{DMP_OrbLine_sub,DMP_OrbLine} in the case even $q$. 

In this paper, we propose an approach to obtain orbits of the class $\OO_6$. Our approach is based on the analysis of the stabilizer of a line from $\OO_6$.
For all even and odd q we describe a family of orbits of
$\OO_6$ and their stabilizer groups. The orbits of this family include an essential part of all
$\OO_6$ orbits.


The paper is organized as follows.
Section \ref{sec:prelim} contains background and preliminaries. In particular, in Theorem \ref{th2:MAGMA}, the known computer results on line orbits of the class $\OO_6$ are given. \\
In Section \ref{sec:orbit}, the stabilizer group and the orbit of the line through the points $\Pf(1,0,0,1)$, $\Pf(0,0,1,0)$ is described for all the values of $q$ for which such a line belongs to $\OO_6$. \\
In Section \ref{sec:linesLmu}, the family of lines  $\{\ell_{\mu} ~|  ~\mu\in\F_q \}$ such that the points $\Pf(0,\mu,0,1), \Pf(1,0,1,0) \in \ell_\mu$ is defined, and for all $q$ it is proven when $\ell_{\mu} \in \OO_6$. The stabilizer groups and the orbits of the lines $\ell_\mu$ are analyzed in the following sections. \\
In Section \ref{sec:orbitlinesLmuEven}, for even $q$, the stabilizer groups, and the size and the number of orbits of  $\ell_{\mu}$-lines is established. Also it is stated that for even $q$ no $\ell_{\mu}$-line belongs to the orbit described in Section \ref{sec:orbit}. \\
In Section \ref{sec:orbitlinesLmuChar3}, for $q \equiv0\pmod3$, the stabilizer groups and the size of orbits of  $\ell_{\mu}$-lines is established. The exact number of orbits is found for $q\equiv-1\pmod4$; for $q\equiv1\pmod4$ a lower and an upper bound are given.\\
In Section \ref{sec:orbitlinesLmuOdd}, for $q$ odd, $q\not \equiv0\pmod3$, the stabilizer groups and the size of orbits of  $\ell_{\mu}$-lines is established. Also it is stated when an $\ell_{\mu}$-line belongs to the orbit described in Section \ref{sec:orbit}. \\

\section{Preliminaries}\label{sec:prelim}

In this section, in the beginning we cite some results from \cite{Hirs_PG3q,Hirs_PGFF} useful in this paper. Then, in Theorem \ref{th2:MAGMA}  we give computational results from \cite{DMP_OrbLine}.

Let $\boldsymbol{\pi}(c_0,c_1,c_2,c_3)$ be the plane of $\PG(3,q)$ with equation
$c_0x_0+c_1x_1+c_2x_2+c_3x_3=0$, $c_i\in\F_q$.
We denote $\F_{q}^*=\F_{q}\setminus\{0\}$, $\F_q^+=\F_q\cup\{\infty\}$.

Let $\Pf(x_0,x_1,x_2,x_3)$ be a point of $\PG(3,q)$  with homogeneous coordinates $x_i\in\F_{q}$.

Let  $P(t)$ be a point of $\in\PG(3,q)$ with
\begin{equation}\label{eq1:Pt}
  t\in\F_q^+,~ P(t)=\Pf(t^3,t^2,t,1) \t{ if }t\in\F_q,~P(\infty)=\Pf(1,0,0,0).
\end{equation}

Let $\C\subset\PG(3,q)$ be the \emph{twisted cubic} consisting of $q+1$ points no four of which are coplanar.
We consider $\C$ in the canonical form
\begin{equation}\label{eq2:cubic}
\C=\{P(t)\,|\,t\in\F_q^+\}.
\end{equation}

A \emph{chord} of $\C$ through the points $P(t_1)$ and $P(t_2)$ is a line joining either a pair of real points of $\C$ or a pair of complex conjugate points. In the last  case, it is an \emph{imaginary chord}. If the real points are distinct, it is a \emph{real chord}; if they coincide with each other, it is a \emph{tangent.} The coordinate vector of a chord has the form
\begin{equation}\label{eq1:chord}
 L_{\t{ch}}=(a_2^2, a_1 a_2, a_1^2-a_2, a_2, -a_1, 1)
\end{equation}
where $a_1 =t_ 1 + t_ 2$, $a_2 =t_1 t_ 2$. If $x^2- a_1x + a_2$ has 2, 1, or 0  roots in $\F_q$ then \eqref{eq1:chord} gives, respectively, a real chord, a tangent, or an imaginary chord.

The \emph{osculating plane} $\pi_\t{osc}(t)$ in the  point $P(t)\in\C$ has the form
\begin{equation}\label{eq2:osc_plane}
\pi_\t{osc}(t)=\boldsymbol{\pi}(1,-3t,3t^2,-t^3)\t{ if }t\in\F_q; ~\pi_\t{osc}(\infty)=\boldsymbol{\pi}(0,0,0,1).
\end{equation}
 The $q+1$ osculating planes form the osculating developable $\Gamma$ to $\C$, that is a \emph{pencil of planes} for $q\equiv0\pmod3$ or a \emph{cubic developable} for $q\not\equiv0\pmod3$.

A line is an \emph{axis}, intersection of a pair of real or complex conjugate planes of $\Gamma$, say $\pi_\t{osc}(t_1)$ and $\pi_\t{osc}(t_2)$, if its coordinate vector has the form
 \begin{equation}\label{eq1:axis}
   L_{\t{ax}}=(\beta_2^2, \beta_1 \beta_2, 3\beta_2, (\beta_1^2-\beta_2)/3, -\beta_1,1)
 \end{equation}
 where $\beta_1 = t_1+t_2$, $\beta_2 = t_1t_2$. We call the line a \emph{real axis}, a \emph{generator (tangent)} or
an \emph{imaginary axis} of $\Gamma$ as $x^2- \beta_1 x + \beta_2$ has 2, 1, or 0 roots in $\F_q$.

The null polarity $\A$ \cite[Sections 2.1.5, 5.3]{Hirs_PGFF}, \cite[Theorem~21.1.2]{Hirs_PG3q} is given by
\begin{equation}\label{eq2:null_pol}
\Pf(x_0,x_1,x_2,x_3)\A=\boldsymbol{\pi}(x_3,-3x_2,3x_1,-x_0),~q\not\equiv0\pmod3.
\end{equation}
It interchanges $\C$ and $\Gamma$, and their corresponding chords and axes.

\begin{notation}
Throughout the paper, we consider $q\equiv\xi\pmod3$ with $\xi\in\{-1,0,1\}$. Many values depend of $\xi$ or make sense only for specific $\xi$.
If it is not clear by the context, we note this by remarks.
The following notation is used.
\begin{align*}
  &G_q && \t{the group of projectivities in } \PG(3,q) \t{ fixing }\C;\db  \\
&tr&&\t{the sign of transposition};\db \\
&\#S&&\t{the cardinality of a set }S;\db\\
&\overline{AB}&&\t{the line through the points $A$ and }B;\db\\
&\triangleq&&\t{the sign ``equality by definition"};\db\\
&\EnG\t{-line}&&\t{a line, external respect to the cubic $\C$, not in osculating planes,}\db\\
&&&\t{and other than a chord and an axis;}\db \\
&\OO_6=\OO_{\EnG}&&\t{the union (class) of all orbits of $\EnG$-lines under }G_q.
\end{align*}
\end{notation}

\begin{remark}
  The words ``and an axis'' are included to the definition of $\EnG$-line by context of \cite[Lemma 21.1.4]{Hirs_PG3q}. 
\end{remark}
\begin{theorem}\label{th2_Hirs}
\emph{\cite[Chapter 21]{Hirs_PG3q}} The following properties of the twisted cubic $\C$ of \eqref{eq2:cubic} hold:
\begin{description}
  \item[(i)] The group $G_q$ acts triply transitively on $\C$;  $G_q\cong PGL(2,q)$ for $q\ge5$.
    A matrix $\MM$ corresponding to a projectivity of $G_q$ has the general form
  \begin{align}\label{eq2_M}
& \mathbf{M}=\left[
 \begin{array}{cccc}
 a^3&a^2c&ac^2&c^3\\
 3a^2b&a^2d+2abc&bc^2+2acd&3c^2d\\
 3ab^2&b^2c+2abd&ad^2+2bcd&3cd^2\\
 b^3&b^2d&bd^2&d^3
 \end{array}
  \right],a,b,c,d\in\F_q, ad-bc\ne0.
\end{align}

  \item[(ii)]  The lines of $\PG(3,q)$ can be partitioned into classes called $\OO_i$ and $\OO'_i=\OO_i\A$, each of which is a union of orbits under $G_q$, see \emph{\cite[Lemma 21.1.4]{Hirs_PG3q}} for details. In particular, all orbits of $\EnG$-lines form the class $\OO_6$ (we call it also $\OO_{\EnG}$) of the size $\#\OO_6=\#\OO_{\EnG}=(q^2-q)(q^2-1).$ If $q\not\equiv0\pmod3,~ q\ge5$, then $\OO'_6=\OO_6\A=\OO_6$.
\end{description}
\end{theorem}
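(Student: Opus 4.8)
For part (i), I would realize $G_q$ as the image of $\mathrm{PGL}(2,q)$ under the third symmetric power of the standard representation. Concretely, a fractional‑linear substitution of the parameter, $t\mapsto (at+b)/(ct+d)$ with $ad-bc\neq0$, permutes the points $P(t)$ of \eqref{eq1:Pt}, and one checks by a direct substitution that the corresponding linear transformation of the coordinates $(x_0,x_1,x_2,x_3)=(t^3,t^2,t,1)$ is exactly the matrix $\MM$ of \eqref{eq2_M} (the matrix of $S^3$ of $\left[\begin{smallmatrix} a&b\\ c&d\end{smallmatrix}\right]$ in the monomial basis), so every such $\MM$ lies in $G_q$; this gives a homomorphism $\mathrm{PGL}(2,q)\to G_q$. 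For surjectivity, since $\C$ contains a frame of $\PG(3,q)$ (no four of its points are coplanar), a projectivity fixing $\C$ setwise is determined by the permutation it induces on the parameters, and that permutation respects the intrinsic $\PG(1,q)$‑structure of the normal rational curve $\C$ (e.g. it permutes chords, tangents, and osculating planes compatibly), hence is fractional‑linear. Injectivity, and the precise isomorphism type for $q\ge5$, follow from a short eigenvalue argument: if $S^3$ of a $2\times2$ matrix is scalar, then the matrix itself is scalar (small $q$ must be excluded, since there $G_q$ can be larger). Finally, triple transitivity of $G_q$ on $\C$ is inherited from the sharp triple transitivity of $\mathrm{PGL}(2,q)$ on $\F_q^+$.

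For part (ii), the classes are cut out by $G_q$‑invariant incidence data attached to a line. To a line $\ell$ I would associate the number of points of $\C$ on $\ell$ (and, when $\ell$ is a chord or an axis, the real/tangent/imaginary type read off from \eqref{eq1:chord}, \eqref{eq1:axis}), the number of osculating planes of $\Gamma$ containing $\ell$, the number of tangent lines of $\C$ (equivalently generators of $\Gamma$) meeting $\ell$, and so on. Each of these quantities is preserved by $G_q$, so partitioning the lines according to the admissible tuples of such invariants yields a refinement into $G_q$‑stable sets; the finite case analysis of \cite[Lemma~21.1.4]{Hirs_PG3q} shows these are precisely the classes $\OO_i$ and $\OO_i'=\OO_i\A$, with $\OO_6=\OO_{\EnG}$ consisting of the lines external to $\C$, not chords, not axes, and not contained in any osculating plane. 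Each class is automatically a union of $G_q$‑orbits, being $G_q$‑invariant by construction.

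The cardinality $\#\OO_6=(q^2-q)(q^2-1)$ I would obtain by elimination from the total of $(q^2+1)(q^2+q+1)$ lines of $\PG(3,q)$: remove the $q^2+q+1$ chords (the effective divisors of degree $2$ on $\C\cong\PG(1,q)$, i.e.\ split pairs, tangents, and conjugate pairs), the axes of $\Gamma$, the lines lying in the $q+1$ osculating planes, the transversal $1$‑secants of $\C$, and so on, carefully accounting for the overlaps among these families; the arithmetic collapses to $(q^2-q)(q^2-1)$, the characteristic‑$3$ case (where $\Gamma$ is a pencil of planes) treated separately. Lastly, for $q\not\equiv0\pmod3$, $q\ge5$, the identity $\OO_6\A=\OO_6$ follows from the properties of the null polarity $\A$ of \eqref{eq2:null_pol}: $\A$ interchanges $\C$ with $\Gamma$, a point $P(t)$ with its osculating plane $\pi_{\mathrm{osc}}(t)$, and real/tangent/imaginary chords with real/tangent/imaginary axes. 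Hence, for any line $\ell$, ``$\ell$ meets $\C$'' $\iff$ ``$\ell\A$ lies in an osculating plane'' and ``$\ell$ is a chord'' $\iff$ ``$\ell\A$ is an axis'' (and conversely); the four defining conditions of an $\EnG$‑line are thus permuted among themselves by $\A$, so $\ell\in\OO_6\iff\ell\A\in\OO_6$.

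The main obstacle is the case analysis and counting in part (ii): one must list every line class and correctly resolve all incidences and coincidences — in particular distinguishing tangents of $\C$, transversal $1$‑secants, and pencils of lines inside the osculating planes, and handling the degeneracies in characteristics $2$ and $3$ — so that the complementary count yields exactly $(q^2-q)(q^2-1)$. By contrast, part (i) is the standard symmetric‑cube description of $G_q$, and the last assertion is a one‑line consequence of how $\A$ acts on the configuration $(\C,\Gamma)$.
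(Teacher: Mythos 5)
The paper does not prove this theorem at all: it is quoted verbatim from \cite[Chapter 21]{Hirs_PG3q}, so there is no internal argument to compare against. Your outline reconstructs the classical proof along exactly the lines of that source: part (i) via the symmetric cube of $\mathrm{PGL}(2,q)$ (and your verification that $[t^3,t^2,t,1]\MM$ returns $((at+b)^3,(at+b)^2(ct+d),\ldots,(ct+d)^3)$ is the right computation behind \eqref{eq2_M}), part (ii) via $G_q$-invariant incidence data and a complementary count, and the closing identity $\OO_6\A=\OO_6$ via the fact that $\A$ swaps the condition ``$\ell$ meets $\C$ at $P(t)$'' with ``$\ell\A\subset\pi_{\mathrm{osc}}(t)$'' and chords with axes — that last argument is complete and correct as you state it. Two places in your sketch are asserted rather than proved, and they are exactly where the classical proof does its work. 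First, surjectivity in (i): the claim that a setwise stabilizer of $\C$ induces a fractional-linear permutation of the parameters needs an actual argument (the standard one composes with a symmetric-cube element so that $P(\infty),P(0),P(1)$ are fixed and then shows the stabilizer of three points of $\C$ inside $G_q$ is trivial for $q\ge5$, using that any five points of $\C$ form a frame; this is also where $q\le4$ genuinely fails, not merely ``can be larger''). Second, the count $\#\OO_6=(q^2-q)(q^2-1)$: saying ``the arithmetic collapses'' is not a computation — one must actually enumerate the $q^2+q+1$ chords, the $q^2+q+1$ axes (or the single axis in characteristic $3$), the unisecants, and the lines inside osculating planes, with all overlaps, and the answer is only believable once that bookkeeping is done. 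Neither gap is a wrong idea, but as written the proposal is a correct strategy rather than a proof of the quoted result.
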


\begin{lemma}\label{lem1:equation}
In $\F_q$, $q$ odd, the equation $3x^2+ 1= 0$  has:
 no roots, if $q\equiv-1\pmod3$, or 2 distinct roots, if $q\equiv 1\pmod3$.
\end{lemma}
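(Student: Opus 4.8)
The plan is to analyze the equation $3x^2+1=0$ over $\F_q$ with $q$ odd by reducing it to a question about quadratic residues, since $q$ odd guarantees $3$ and $-1$ are meaningful and $-\tfrac13$ makes sense. First I would note that $x=0$ is never a solution, so we may rewrite the equation as $x^2=-3^{-1}$; hence the number of roots in $\F_q$ equals $1+\chi(-3^{-1})$ where $\chi$ is the quadratic character of $\F_q$ (with the convention $\chi(0)=0$, which does not arise here). Since $\chi(-3^{-1})=\chi(-1)\chi(3)^{-1}=\chi(-1)\chi(3)$ (as $\chi$ takes values $\pm1$), the equation has $2$ distinct roots when $\chi(-3)=1$ and no roots when $\chi(-3)=-1$; the case of a repeated root cannot occur because $-3^{-1}\neq0$.

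Next I would evaluate $\chi(-3)$ and relate it to the congruence class of $q$ modulo $3$. The cleanest route is to invoke quadratic reciprocity in the form that $-3$ is a square modulo an odd prime $p\neq3$ if and only if $p\equiv1\pmod3$; more intrinsically, $\F_q$ contains a primitive cube root of unity (equivalently a square root of $-3$) precisely when $3\mid q-1$, i.e.\ when $q\equiv1\pmod3$. Writing $\omega$ for a primitive cube root of unity, one has $(2\omega+1)^2=4\omega^2+4\omega+1=4(\omega^2+\omega)+1=-4+1=-3$, so $-3$ is a square in $\F_q$ as soon as $\omega\in\F_q$; conversely if $-3=\delta^2$ then $\tfrac{-1+\delta}{2}$ is a primitive cube root of unity. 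Thus $\chi(-3)=1\iff q\equiv1\pmod3$ and $\chi(-3)=-1\iff q\equiv-1\pmod3$, and combining with the previous paragraph yields the claim: no roots if $q\equiv-1\pmod3$, and two distinct roots if $q\equiv1\pmod3$.

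The only mild subtlety — the part I would be most careful about — is the passage from "$-3$ is a square" to "$q\equiv1\pmod3$", since for a non-prime $q=p^n$ one should not blindly quote the integer Legendre-symbol computation; the cube-root-of-unity argument above handles all odd prime powers uniformly and avoids case analysis on $n$. Everything else is routine: the reduction to $x^2=-3^{-1}$, the count $1+\chi(\cdot)$, and the observation that the discriminant of $3x^2+1$ is $-12=-3\cdot4$, so the two roots are distinct whenever they exist. I would present the argument in the order: (1) rewrite and reduce to a character value; (2) compute $\chi(-3)$ via cube roots of unity; (3) conclude, noting distinctness.
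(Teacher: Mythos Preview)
Your proposal is correct and follows essentially the same approach as the paper: both reduce the question to whether $-3$ is a square in $\F_q$ and then use that this holds precisely when $q\equiv1\pmod3$. The only difference is that the paper simply cites \cite[Section~1.5(xi)(xii)]{Hirs_PGFF} for this last fact, whereas you supply a self-contained argument via primitive cube roots of unity; your treatment is thus slightly more detailed but not conceptually different.
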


\begin{proof}
By \cite[Section 1.5(xi)(xii)]{Hirs_PGFF}, $-3$ is a non-zero square (resp. a non-square) in $\F_q$ if $q\equiv1\pmod3$ (resp. $q\equiv-1\pmod3$).
\end{proof}

The following theorem states computational results regarding the orbits of $\EnG$-lines for $5\le q\le 37$ and $q=64$.

\begin{theorem} \label{th2:MAGMA}
\emph{\cite[Section 8]{DMP_OrbLine}}
Let $q\equiv\xi\pmod3$, $\xi\in\{1,-1,0\}$.
\begin{description}
  \item[(i)] Let $5\le q\le 37$ and $q=64$. Then

$\mathbf{(b)}$ The total number of line orbits in $\PG(3,q)$ is $2q+7+\xi$.

$\mathbf{(a)}$  For the total number $L_{\EnG\Sigma}$ of orbits of $\EnG$-lines we have\\
$L_{\EnG\Sigma}=2q-3+\xi\t{ for }q\t{ odd},~L_{\EnG\Sigma}=2q-2+\xi\t{ for }q\t{ even}$.

   \item[(ii)] Let $q$ be odd, $5\le q\le 37$.
   Then under $G_q$, for $\EnG$-lines,  there are\\
   $
   \begin{array}{lcl}
     (q-\xi)/3 & \t{orbits of length} & q^3-q, \\
     q-1 & \t{orbits of length} & (q^3-q)/2, \\
     n_q^{(\xi)} & \t{orbits of length} & (q^3-q)/4,
   \end{array}
   $\\
   where $n_q^{(1)}=(2q-11)/3,~n_q^{(-1)}=(2q-10)/3,~
    n_q^{(0)}=(2q-6)/3$.\\
     In addition, for $q\in\{7,13,19,25,31,37\}$ where $q\equiv1\pmod3$, there are\\
     $
     \begin{array}{lcl}
       1 &\t{orbit of length}&(q^3-q)/12, \\
       2&\t{orbits of length}&(q^3-q)/3.
     \end{array}
     $

  \item[(iii)] Let $q=8,16,32,64$. Then under $G_q$, for $\EnG$-lines, there are\\
     $2+\xi$  orbits of length $(q^3-q)/(2+\xi)$;
     $2q-4$ orbits of length $(q^3-q)/2$.
\end{description}
\end{theorem}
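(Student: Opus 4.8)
The statement collects a finite amount of data — one orbit distribution for each $q$ in the list $5\le q\le37$ together with $q=64$ — so the plan is to describe an explicit and verifiable computation producing it. First I would fix $\C$ in the canonical form \eqref{eq1:Pt}--\eqref{eq2:cubic} and realise $G_q$ through the matrices $\MM$ of \eqref{eq2_M}; since $G_q\cong\mathrm{PGL}(2,q)$ for $q\ge5$, it is cheaper to take the three standard generators $t\mapsto t+1$, $t\mapsto\lambda t$ (with $\lambda$ a primitive element of $\F_q$) and $t\mapsto 1/t$ of $\mathrm{PGL}(2,q)$ acting on $\F_q^+$ and to lift them via \eqref{eq2_M}. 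The lines of $\PG(3,q)$ are then enumerated by their Plücker (Grassmann) coordinate vectors, of which there are $(q^2+1)(q^2+q+1)$.

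The second step is to isolate the $\EnG$-lines among them. A line $\ell$ lies in $\OO_6$ precisely when it is external to $\C$, is neither a real nor an imaginary chord, is neither a real axis nor an imaginary axis nor a generator of $\Gamma$, and is contained in no osculating plane; concretely one checks that $\ell$ meets no point $P(t)$, that its coordinate vector has neither the shape \eqref{eq1:chord} nor the shape \eqref{eq1:axis}, and that $\ell$ lies in no $\pi_\t{osc}(t)$ of \eqref{eq2:osc_plane}, or equivalently one applies the full membership test of \cite[Lemma 21.1.4]{Hirs_PG3q}. By Theorem \ref{th2_Hirs}(ii) the resulting set must have cardinality $\#\OO_6=(q^2-q)(q^2-1)=(q^3-q)(q-1)$, which is the first built-in check.

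Third, on this set I would run a breadth-first orbit computation under the generators of $G_q$, recording the length of each orbit. Since $|G_q|=q^3-q$ for $q\ge5$, every orbit length equals $(q^3-q)/k_\ell$ with $k_\ell=\#\mathrm{Stab}_{G_q}(\ell)$, so the output is a multiset of divisors of $q^3-q$; its multiplicities give the tables in (ii) and (iii), their sum gives $L_{\EnG\Sigma}$ in (i)(a), and adding the already classified orbit counts of the remaining classes $\OO_i$, $\OO'_i$ yields the total $2q+7+\xi$ in (i)(b). A further internal check is the identity $\sum_\ell 1/k_\ell=\#\OO_6/(q^3-q)=q-1$; one verifies directly that the tabulated data satisfy it for each $\xi\in\{1,-1,0\}$ — for instance, for odd $q$ with $\xi=1$ one gets $\tfrac{q-1}{3}+\tfrac{q-1}{2}+\tfrac{2q-11}{12}+\tfrac{1}{12}+\tfrac23=q-1$ — and similarly the orbit counts in (ii) sum to $2q-3+\xi$ and those in (iii) to $2q-2+\xi$.

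The only real obstacle is computational effort, concentrated at $q=64$: there are about $1.7\times10^{7}$ lines and $|G_q|\approx2.6\times10^{5}$, so a naive implementation is heavy. It becomes tractable by storing Plücker vectors in a fixed normal form via hashing, by precomputing the action of the generators on $\F_q^+$ once, and — since $q\equiv1\pmod3$ there — by using $\OO_6\A=\OO_6$ from Theorem \ref{th2_Hirs}(ii) to pair orbits and roughly halve the work. The one point that needs genuine mathematical care is the $\OO_6$-membership test: separating $\EnG$-lines from axes and from lines lying in osculating planes requires the complete case analysis of \cite[Lemma 21.1.4]{Hirs_PG3q}, and the degenerate case $q\equiv0\pmod3$, where $\Gamma$ collapses to a pencil and the null polarity \eqref{eq2:null_pol} is unavailable (so the cases $q=9,27$ must be handled on their own), is exactly where the subtlety of \cite[Section 8]{DMP_OrbLine} lies.
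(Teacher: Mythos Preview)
Your plan is sound and, in spirit, exactly what lies behind the statement: Theorem~\ref{th2:MAGMA} is not proved in this paper at all but is quoted verbatim from \cite[Section~8]{DMP_OrbLine} as a computer result, and the paper's own later computations (Lemma~\ref{lem3:stabil0infqm1}(ii), Lemma~\ref{lem7:infTOgamqodd}, Example~\ref{examp6}) are carried out with Magma~\cite{Magma}. So there is no ``paper's proof'' to compare against beyond an exhaustive orbit enumeration of the kind you describe; your use of Pl\"ucker coordinates, the three standard generators of $\mathrm{PGL}(2,q)$ lifted through \eqref{eq2_M}, the $\OO_6$-membership test via \eqref{eq1:chord}, \eqref{eq1:axis}, \eqref{eq2:osc_plane}, and the consistency check $\sum_\ell 1/k_\ell=q-1$ are all appropriate and your arithmetic verification for $\xi=1$ is correct.

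Two small remarks. First, the trick of using $\OO_6\A=\OO_6$ at $q=64$ does not straightforwardly halve the work: the null polarity permutes the $G_q$-orbits inside $\OO_6$, but an orbit may well be $\A$-invariant rather than paired with a distinct one, so you only save on the orbits that genuinely come in $\A$-pairs. Second, your caution about $q\equiv0\pmod3$ is well placed for the $\OO_6$-membership test (axes degenerate), but it does not affect the orbit computation itself, which only needs the generators of $G_q$; the cases $q=9,27$ are no harder algorithmically than the others once the correct line classes of \cite[Lemma~21.1.4]{Hirs_PG3q} for $\xi=0$ are used.
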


\begin{conjecture} \label{conj2:orbEnG} \emph{\cite{DMP_OrbLine}}
The results of Theorem \ref{th2:MAGMA} hold for all $q\ge5$ with the corresponding parity and $\xi$ value.
\end{conjecture}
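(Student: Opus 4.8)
The plan is to reduce Conjecture~\ref{conj2:orbEnG} to a complete description of the $G_q$-orbits lying inside the single class $\OO_6=\OO_{\EnG}$. All the remaining line classes $\OO_i$, $\OO'_i$ and their orbit structure are already known from \cite{Hirs_PG3q,DMP_OrbLine,BlokPelSzo,GulLav}; subtracting those (ten orbits for odd $q$, nine for even $q$) shows that part \textbf{(i)(b)} is equivalent to part \textbf{(i)(a)}, while \textbf{(ii)} and \textbf{(iii)} only refine \textbf{(i)(a)} by recording the exact multiset of orbit lengths inside $\OO_6$. Since $\#G_q=q^3-q$, each $\EnG$-orbit has length $(q^3-q)/d$ with $d=\#\mathrm{Stab}_{G_q}$, and the conjectured values force $d\in\{1,2,3,4,12\}$; moreover $\#\OO_6=(q^2-q)(q^2-1)=(q-1)(q^3-q)$, so any correct list of orbits must satisfy the single closing identity that the lengths of all $\EnG$-orbits add up to $(q-1)(q^3-q)$. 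Thus the whole task becomes: (1) produce a transversal of the $G_q$-orbits in $\OO_6$; (2) compute each stabilizer order $d$; (3) read off orbit lengths by orbit--stabilizer; (4) check the closing identity, i.e.\ that no orbit is missed.

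For (1) I would use that $G_q\cong\mathrm{PGL}(2,q)$ acts triply transitively on $\C$ via the matrices $\MM$ of \eqref{eq2_M}, so a line may be normalized by its intersection pattern with $\C$, with the tangents, and with the osculating planes of $\Gamma$; for an $\EnG$-line (external, not a chord, not an axis, not in an osculating plane) this still leaves a one-parameter residual family. The normal forms already set up in the paper — $\overline{\Pf(1,0,0,1)\,\Pf(0,0,1,0)}$ of Section~\ref{sec:orbit} and $\ell_{\mu}=\overline{\Pf(0,\mu,0,1)\,\Pf(1,0,1,0)}$ of Section~\ref{sec:linesLmu} — are the natural representatives. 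I would first determine, case by case in $q\bmod3$, $q\bmod4$ and the parity of $q$, exactly which $\mu$ give $\ell_{\mu}\in\OO_6$ (a finite list of polynomial (non-)vanishing conditions obtained from \eqref{eq1:chord}, \eqref{eq1:axis}, \eqref{eq2:osc_plane}), then compute $\mathrm{Stab}_{G_q}(\ell_{\mu})$ by imposing $\MM\cdot L(\ell_{\mu})=\lambda L(\ell_{\mu})$ on the Pl\"ucker coordinates; this reduces to algebraic equations in $a,b,c,d$ whose solution set is the stabilizer, of order $1,2,3,4$ or $12$, with the branching governed by whether certain elements exist in $\F_q$ (roots of $3x^2+1$ as in Lemma~\ref{lem1:equation}, nontrivial cube roots of unity, squareness of specific constants) — which is exactly why the answer depends on $\xi=q\bmod3$ and on $q\bmod4$. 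Step (3) is then immediate, and the one further subtlety is internal bookkeeping: distinct parameters $\mu\ne\mu'$ may give lines in the same orbit, so one has to solve $\MM\cdot L(\ell_{\mu})=\lambda L(\ell_{\mu'})$ — i.e.\ find the action induced on the parameter $\mu$ — in order to count the distinct orbits represented by the family and sum their lengths.

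The main obstacle is completeness. As the abstract itself signals, $\{\ell_{\mu}\}$ captures only an essential part of $\OO_6$, so after summing the lengths of the $\ell_{\mu}$-orbits and of the Section~\ref{sec:orbit} orbit one must still account for the residual $\EnG$-lines and show that their orbits bring the total up to exactly $(q-1)(q^3-q)$. I would organize this by fixing, for the residual lines, the intersection type with $\C$ and with the osculating developable $\Gamma$ (a real pair, a conjugate pair, or neither; a tangent or a generator), treating each type as its own low-dimensional algebraic family, and rerunning the same stabilizer computation. For even $q$ the picture is rigid enough — compare the parallel analysis for the arc $\mathcal{A}$ in \cite{CePa} — that this should close the count, and \textbf{(iii)} indeed already has the simplest shape. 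For odd $q$, and especially for $q\equiv1\pmod3$ where the sporadic orbits of lengths $(q^3-q)/12$ and $(q^3-q)/3$ occur, the delicate point is proving there are exactly $n_q^{(\xi)}$ orbits of length $(q^3-q)/4$ — i.e.\ controlling, as a function of $q$, how many admissible $\mu$ yield a stabilizer of order $4$ rather than $2$ (and similarly order $12$ versus $3$). I expect that pinning this down uniformly for all $q\ge5$ — ruling out, in particular, a finer arithmetic dependence on $q$ than just $\xi$ and $q\bmod4$ — is precisely the gap that keeps the statement a conjecture rather than a theorem.
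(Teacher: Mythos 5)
The statement you were asked to prove is labelled a \emph{conjecture} in the paper, and the paper contains no proof of it: Theorem~\ref{th2:MAGMA} records computer results for $5\le q\le 37$ and $q=64$, and Conjecture~\ref{conj2:orbEnG} simply asserts that the same pattern persists for all $q\ge5$. Your submission is likewise not a proof but a research programme, and you say so yourself in the final sentence. So the honest verdict is that there is a genuine gap --- the completeness step --- and it is the same gap that the authors leave open.

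That said, the programme you outline is essentially the one the paper actually executes for its partial results. Your steps (1)--(3) (choose normal forms, compute stabilizers by solving polynomial conditions on $a,b,c,d$ in the matrix \eqref{eq2_M}, apply orbit--stabilizer via \cite[Lemma 2.44(ii)]{Hirs_PGFF}) are precisely Sections~\ref{sec:orbit}--\ref{sec:orbitlinesLmuOdd}: the line $\Lc$ of \eqref{eq3:ell0infdef} and the family $\ell_\mu$ of \eqref{eq2:ellmu} are exactly the representatives you propose, the admissibility conditions on $\mu$ are Lemmas~\ref{lem2:EllMu}--\ref{lem4:EnGEllMu}, the stabilizer orders $1,2,3,4,12$ and their dependence on $\xi$, $q\bmod 4$, and the squareness/cube/fourth-power conditions appear in Theorems~\ref{th3:main}, \ref{S5:stab}, \ref{th:stab_l_mu_q=0}, \ref{th7:stab_l_mu_qodd}, and the ``internal bookkeeping'' of when $\mu\ne\mu'$ give the same orbit is Theorem~\ref{S5:th_numberorbits}, Lemma~\ref{lem6:one orbit}, and Theorem~\ref{th7:l0inf_equiv_l_mu}. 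Your step (4), the closing identity that the orbit lengths sum to $\#\OO_6=(q^2-q)(q^2-1)$, is where everything stops: the families constructed account for only about half of $\OO_6$ for even $q$ and roughly $\frac{11}{32}$ to $\frac{35}{96}$ of it for $q\equiv0\pmod3$ (Theorem~\ref{th6:equiv_l_muq=0}), and no transversal of the residual $\EnG$-lines is produced. Even within the $\ell_\mu$ family the count is not fully closed: for $q\equiv0\pmod3$, $q\equiv1\pmod4$ the number $\mathfrak{t}_q$ of triples is only bounded, not determined (Example~\ref{examp6} is itself conjectural). Since you cannot supply the missing transversal or the uniform count of orbits of each length, your proposal does not establish the conjecture; it correctly reproduces the known partial picture and correctly locates the open problem.
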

For odd $q\not\equiv0\pmod3$, the conjecture on the case (i) of Theorem \ref{th2:MAGMA} is given also in \cite{GulLav}.

\section{An orbit $\Os_\Lc$ of the class $\OO_6=\OO_{\EnG}$ for $q\not\equiv0\pmod3$}\label{sec:orbit}
In this section $q\not\equiv0\pmod3$.

Let $Q_\beta$ and $Q_\infty$ be the points such that $Q_\beta=\Pf(1,0,\beta,1), ~\beta\in\F_q$; $Q_\infty=\Pf(0,0,1,0)$. We consider the line $\Lc = \overline{Q_0Q_\infty}$ through the points $Q_0$ and $Q_\infty$. We have
\begin{equation}\label{eq3:ell0infdef}
 \Lc =\overline{\Pf(1,0,0,1)\Pf(0,0,1,0)}= \{\Pf(0,0,1,0), \Pf(1,0,\beta,1)|\beta\in\F_q\}.
\end{equation}

\begin{lemma}\label{lem3:l0infInEnG}
The line  $\Lc$ is an $\EnG$-line.
\end{lemma}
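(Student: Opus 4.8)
To prove that $\Lc = \overline{\Pf(1,0,0,1)\Pf(0,0,1,0)}$ is an $\EnG$-line, I must verify that $\Lc$ is (1) external to $\C$, i.e. contains no point $P(t)$; (2) not contained in any osculating plane $\pi_\t{osc}(t)$; (3) not a chord of $\C$; and (4) not an axis of $\Gamma$. By Theorem \ref{th2_Hirs}(ii) these four conditions together are exactly the definition of membership in $\OO_6 = \OO_{\EnG}$. The most economical route is to compute the Pl\"ucker (line) coordinates of $\Lc$ and compare with the normal forms given in the excerpt for chords \eqref{eq1:chord} and axes \eqref{eq1:axis}, since those have a rigid shape (last coordinate $1$, second-to-last coordinate $-a_1$ or $-\beta_1$, etc.); meanwhile externality and the osculating-plane condition can be checked directly from the parametrization \eqref{eq3:ell0infdef}.

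First I would dispose of (1) and (2) by elementary substitution. A general point of $\Lc$ other than $Q_\infty$ is $\Pf(1,0,\beta,1)$; setting this equal to $P(t) = \Pf(t^3,t^2,t,1)$ forces $t^2 = 0$, hence $t=0$, but then the third coordinate is $0 \ne \beta$ in general, and more importantly $P(0) = \Pf(0,0,0,1) \ne \Pf(1,0,0,1) = Q_0$; also $Q_\infty = \Pf(0,0,1,0) \ne P(\infty) = \Pf(1,0,0,0)$. So no point of $\Lc$ lies on $\C$: condition (1) holds. For (2), a point $\Pf(1,0,\beta,1)$ lies on $\pi_\t{osc}(t) = \boldsymbol{\pi}(1,-3t,3t^2,-t^3)$ iff $1 + 3t^2\beta - t^3 = 0$; if all points of $\Lc\setminus\{Q_\infty\}$ lay on a single osculating plane this would have to hold for all $\beta\in\F_q$, forcing $t = 0$ (from the coefficient of $\beta$) and then $1 = 0$, impossible; and $Q_\infty$ lies on $\pi_\t{osc}(\infty) = \boldsymbol\pi(0,0,0,1)$ only if its last coordinate vanishes, which it does, but $Q_0$ does not lie on $\boldsymbol\pi(0,0,0,1)$. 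Hence $\Lc$ meets each osculating plane in at most one point and is contained in none: condition (2) holds.

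Next I would compute the line coordinates of $\Lc$. Using the two spanning points $\Pf(1,0,0,1)$ and $\Pf(0,0,1,0)$ and the usual convention (matching the ordering implicit in \eqref{eq1:chord}, \eqref{eq1:axis}), the $2\times 2$ minors give a coordinate vector proportional to $(0,1,0,0,0,1)$ (up to the paper's sign/ordering convention — I would pin down the exact normalization from how \eqref{eq1:chord} is derived). Now compare with the chord form \eqref{eq1:chord}: $L_{\t{ch}} = (a_2^2, a_1a_2, a_1^2 - a_2, a_2, -a_1, 1)$. Matching the last entry fixes the scale; then the fourth entry forces $a_2 = 0$, the fifth forces $a_1 = 0$, and then the third entry of $L_{\t{ch}}$ would be $0$, but the third entry of $\Lc$'s vector is... (here I need the precise coordinate — whichever it is, the system is overdetermined and inconsistent). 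A parallel comparison with the axis form \eqref{eq1:axis} $L_{\t{ax}} = (\beta_2^2, \beta_1\beta_2, 3\beta_2, (\beta_1^2 - \beta_2)/3, -\beta_1, 1)$ similarly forces $\beta_1 = \beta_2 = 0$ and then a contradiction among the remaining coordinates. So $\Lc$ is neither a chord nor an axis: conditions (3) and (4) hold. Combining all four, $\Lc\in\OO_6$, i.e. $\Lc$ is an $\EnG$-line.

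**Main obstacle.** The only real care point is bookkeeping of the Pl\"ucker-coordinate conventions: the normal forms \eqref{eq1:chord} and \eqref{eq1:axis} are written in a specific ordering of the six coordinates and with a specific normalization (leading/trailing entry $1$), so I must express the line coordinates of $\Lc$ in the \emph{same} convention before comparing, or the matching arguments in steps (3)–(4) will be vacuous or wrong. Once that convention is fixed, every step is a short finite computation; there is no genuine difficulty, only the risk of a sign or index slip. An alternative, perhaps cleaner, route for (3) and (4) avoiding coordinates entirely: a real/imaginary chord meets $\C$ in two (possibly complex-conjugate) points, a tangent in one, so a chord is never external — but step (1) already shows $\Lc$ is external, hence not a real chord or tangent; ruling out imaginary chords and (for $q\not\equiv 0\pmod 3$) axes still seems to need the coordinate comparison or the null polarity $\A$ of \eqref{eq2:null_pol}, since $\A$ interchanges chords and axes, so once $\Lc$ is shown not to be a chord, $\Lc\A$ is automatically not an axis — but $\Lc\ne\Lc\A$ in general, so one would instead argue that an imaginary chord, being external, would have to be detected by the coordinate form \eqref{eq1:chord} having an irreducible quadratic, which is again the computation above. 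I would therefore just do the coordinate comparison carefully and be done.
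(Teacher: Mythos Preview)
Your proposal is correct and follows essentially the same approach as the paper: verify externality directly, check the osculating-plane condition by substitution, and compare the line-coordinate vector of $\Lc$ (which the paper records as $(0,-1,0,0,0,1)$) against the chord and axis normal forms \eqref{eq1:chord}, \eqref{eq1:axis}. The paper's only streamlining is to observe that externality and the osculating-plane check already dispose of real chords, tangents, and real axes, so the coordinate comparison is invoked only for the imaginary chord and imaginary axis cases.
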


\begin{proof}
For every $q$, the points $Q_\infty$ and $Q_\beta$, $\beta\in\F_q$, do not belong to the cubic $\C$, so  $\Lc$ is an external line.

  We show that no osculating plane $\pi_\t{osc}(t)$ of \eqref{eq2:osc_plane} contains two distinct points of $\Lc$. We have
$Q_\infty\in\pi_\t{osc}(\infty)$, $Q_\infty\in\pi_\t{osc}(0)$, $Q_\beta\notin\pi_\t{osc}(\infty)\cup\pi_\t{osc}(0),~\beta\in\F_q$.
Let $t\in\F_q^*$. Obviously, $Q_\infty\notin\pi_\t{osc}(t)$.
Points $Q_i,Q_j$ simultaneously belong to $\pi_\t{osc}(t)$ if and only if $1+3it^2-t^3=1+3jt^2-t^3=0$ that implies $it^2=jt^2$. As $t\ne0$ we have $i=j$.

Finally, the coordinate vector of $\Lc$ is $(0,-1,0,0,0,1)$.
Comparing it with \eqref{eq1:chord} and \eqref{eq1:axis}, we see that $\Lc$  cannot be an imaginary chord or an imaginary axis.
\end{proof}

Note that for $q\equiv0\pmod3$, $\Lc$ is not an $\EnG$-line. In fact, $\Lc$ is contained in the plane of equation $x_0-x_3=0$ that, if $q\equiv0\pmod3$, is the osculating plane $\pi_\t{osc}(1)$, see \eqref{eq2:osc_plane}.

We denote by $G_q^\infty$ the subgroup of  $G_q$  fixing the point $Q_\infty=\Pf(0,0,1,0)$. Let $\MM^\infty$ be a matrix corresponding to a projectivity of $G_q^\infty$.

\begin{lemma}\label{lem3:stabilQinf}
 The general form of the matrix $\MM^\infty$  is as follows:
 \begin{equation}\label{eq3:MM_Qinf}
   \MM^\infty=\left[
 \begin{array}{cccc}
 1&0&0&0\\
 0&d&0&0\\
 0&0&d^2&0\\
 0&0&0&d^3
 \end{array}
  \right],~d\in\F_q^*.
\end{equation}
\end{lemma}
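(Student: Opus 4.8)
The plan is to determine which projectivities of $G_q$ fix the point $Q_\infty = \Pf(0,0,1,0)$ by directly computing the image of $Q_\infty$ under the general matrix $\MM$ of \eqref{eq2_M} and imposing that it be a scalar multiple of $Q_\infty$. First I would multiply the column vector $(0,0,1,0)^{\mathrm{T}}$ on the left by $\MM$; reading off the third column of $\MM$, the image has homogeneous coordinates $(ac^2,\, bc^2+2acd,\, ad^2+2bcd,\, bd^2)$. For this to represent the same point as $Q_\infty = \Pf(0,0,1,0)$, we need the first, second, and fourth coordinates to vanish while the third does not: that is, $ac^2 = 0$, $bc^2 + 2acd = 0$, $bd^2 = 0$, and $ad^2 + 2bcd \neq 0$.

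Next I would solve this system inside $\F_q$, using the constraint $ad - bc \neq 0$ throughout. From $bd^2 = 0$ either $b = 0$ or $d = 0$; if $d = 0$ then $ad - bc = -bc \neq 0$ forces $b \neq 0$ and $c \neq 0$, but then the third coordinate $ad^2 + 2bcd = 0$, contradicting that $Q_\infty$'s image is nonzero. Hence $d \neq 0$, and therefore $b = 0$. With $b = 0$, the nondegeneracy becomes $ad \neq 0$, so $a \neq 0$; then $ac^2 = 0$ gives $c = 0$. Conversely, $b = c = 0$ and $a, d \neq 0$ make the image $(0,0,ad^2,0) = \Pf(0,0,1,0)$, so these are exactly the solutions. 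Substituting $b = c = 0$ into \eqref{eq2_M} yields the diagonal matrix $\mathrm{diag}(a^3, a^2 d, a d^2, d^3)$, and since $a \neq 0$ we may rescale the projectivity by $a^{-3}$ (a nonzero scalar) to obtain $\mathrm{diag}(1, d/a, (d/a)^2, (d/a)^3)$; renaming $d/a$ as $d \in \F_q^*$ gives precisely the matrix $\MM^\infty$ displayed in \eqref{eq3:MM_Qinf}.

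The argument is essentially a short linear-algebra computation, so there is no serious obstacle; the only point requiring a little care is organizing the case analysis on the vanishing of $b$, $c$, $d$ so as to use the determinant condition $ad - bc \neq 0$ efficiently and to verify that the candidate $d = 0$ is genuinely excluded by the requirement that the image point be well defined. I would also remark, for completeness, that the map $d \mapsto \MM^\infty$ is a group homomorphism from $\F_q^*$ onto $G_q^\infty$, so that $G_q^\infty \cong \F_q^*$ is cyclic of order $q - 1$; this will presumably be used in the subsequent analysis of the stabilizer and orbit of $\Lc$, though it is not strictly part of the stated lemma.
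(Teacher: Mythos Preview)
Your approach is the same as the paper's---a direct computation of the image of $Q_\infty$ under the general matrix $\MM$---but you have used the wrong action convention. Throughout this paper (and in Hirschfeld's reference \cite{Hirs_PG3q}) projectivities act on the \emph{right} of row vectors: the image of $Q_\infty$ is $[0,0,1,0]\MM$, i.e.\ the third \emph{row} of $\MM$, namely $(3ab^2,\; b^2c+2abd,\; ad^2+2bcd,\; 3cd^2)$. You instead computed $\MM\cdot(0,0,1,0)^{tr}$ and read off the third \emph{column} $(ac^2,\; bc^2+2acd,\; ad^2+2bcd,\; bd^2)$. That the column convention is not the one intended can be seen immediately: under it $\MM$ does not preserve the cubic, since $\MM\cdot(0,0,0,1)^{tr}=(c^3,3c^2d,3cd^2,d^3)^{tr}$ is not of the form $P(s)$ unless $3=1$ in $\F_q$.

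By a lucky accident your incorrect system still forces $b=c=0$, so the final matrix you obtain is right; but the argument as written does not prove the lemma. With the correct row convention the conditions are $3ab^2=0$, $b^2c+2abd=0$, $3cd^2=0$, and $ad^2+2bcd\neq 0$. Since $q\not\equiv 0\pmod 3$ in this section, $3\neq 0$, and the case split runs on $a,b$ first: $a=b=0$ kills the third entry; $a=0$, $b\neq 0$ forces $c=0$ from the second equation and again kills the third entry; hence $a\neq 0$, $b=0$, and then $d\neq 0$, $c=0$ follow. You should redo the computation with the row convention, here and in all subsequent lemmas, or you risk genuine errors downstream where no such symmetry will rescue you.
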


 \begin{proof}
We find the version of the matrix $\MM$ of \eqref{eq2_M} fixing the point $\Pf(0,0,1,0)$. For $\delta\in\F_q^*$,  $\Pf(0,0,1,0)$ and $\Pf(0,0,\delta,0)$ represent the same point. We have
$[0,0,1,0]\MM=0,0,\delta,0],~\delta\in\F_q^*$,
that implies $3ab^2=0$, $b^2c+2abd=0,~ad^2+2bcd=\delta,~3cd^2=0$. If $a=b=0$ then $\delta = ad^2+2bcd=0$, contradiction. If $a=0,~b\ne0$ then $b^2c=0$ and $\delta = 2bcd=0$, contradiction. So, $a\ne0$, $b=0$. As $\MM$ is defined up to a factor of proportionality, we can put $a=1$. Now we have, $d^2=\delta\ne0,~3cd^2=0$, so $c=0$ and the assertion follows from \eqref{eq2_M} .
 \end{proof}

We want to determine the subgroup $G_q^{\Lc}$ of  $G_q$  fixing $\Lc$  and its orbit $\Os_\Lc$ under $G_q$. Let $\MM^{\Lc}$ be a matrix corresponding to a projectivity of $G_q^{\Lc}$.

\begin{lemma}\label{lem3:stabil0inf}
Let $q$ be even or let $-1/2$ be a non-cube in $\F_q$. Then  the general form of a matrix $\MM^{\Lc}$ corresponding to a projectivity of $G_q^{\Lc}$ is as follows:
 \begin{equation}\label{eq3:MM ell_0infty}
   \MM^{\Lc}=\left[
 \begin{array}{cccc}
 1&0&0&0\\
 0&d&0&0\\
 0&0&d^2&0\\
 0&0&0&d^3
 \end{array}
  \right],~d\in\F_q^*,~ d  \t{ is a cubic root of unity}.
\end{equation}
\end{lemma}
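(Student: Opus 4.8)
The plan is to determine all matrices $\MM$ of the form \eqref{eq2_M} that stabilize the line $\Lc$ as a set, and then to show that under the stated hypothesis (even $q$, or $-1/2$ a non-cube) the only such matrices are the diagonal ones in \eqref{eq3:MM ell_0infty} with $d^3=1$. The starting observation is that $\Lc$ contains exactly two ``special'' points of a type that a projectivity of $G_q$ must respect, so I would first look for geometric invariants that pin down the behaviour of $\MM$ on $\Lc$. Since $\Lc=\overline{Q_0Q_\infty}$ with $Q_\infty=\Pf(0,0,1,0)$ and $Q_0=\Pf(1,0,0,1)$, a natural first step is to understand which points of $\Lc$ are distinguished: from the proof of Lemma \ref{lem3:l0infInEnG} we see that $Q_\infty$ lies in two osculating planes ($\pi_\t{osc}(\infty)$ and $\pi_\t{osc}(0)$) while $Q_0$ lies on none, and more generally one should check how many osculating planes pass through each point of $\Lc$. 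If $Q_\infty$ is the unique point of $\Lc$ lying on two osculating planes, then any projectivity fixing $\Lc$ must fix $Q_\infty$, hence lies in $G_q^\infty$ and has the diagonal form \eqref{eq3:MM_Qinf} of Lemma \ref{lem3:stabilQinf}.

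Having reduced to matrices $\MM^\infty$ of the form \eqref{eq3:MM_Qinf}, the task becomes purely computational: impose that $\MM^\infty$ sends $\Lc$ to itself. Using the parametrisation \eqref{eq3:ell0infdef}, I would compute the image of a generic point $\Pf(1,0,\beta,1)$ under \eqref{eq3:MM_Qinf}, obtaining $\Pf(1,0,d^2\beta,d^3)$, and require this to be a point of $\Lc$. Rescaling by $d^{-3}$ gives $\Pf(d^{-3},0,d^{-1}\beta,1)$, which lies in $\Lc$ — i.e.\ has the shape $\Pf(1,0,\beta',1)$ — if and only if $d^{-3}=1$, that is $d^3=1$. (One should also separately check the image of $Q_\infty$, but that is automatic since $\MM^\infty$ fixes it.) This already forces $d$ to be a cube root of unity and yields exactly the matrices claimed in \eqref{eq3:MM ell_0infty}; conversely every such matrix clearly stabilises $\Lc$, so the description is sharp.

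The point where the hypothesis ``$q$ even or $-1/2$ a non-cube'' must enter is precisely the claim that the stabiliser of $\Lc$ is contained in $G_q^\infty$, i.e.\ that no projectivity of $G_q$ fixing $\Lc$ can move $Q_\infty$. The honest way to see this is to run the full computation with a general $\MM$ from \eqref{eq2_M}: demand that $\MM$ maps $\{Q_\infty\}\cup\{Q_\beta:\beta\in\F_q\}$ onto itself, and analyse the resulting polynomial system in $a,b,c,d$. Generically this system has only the solutions with $b=c=0$ (giving the diagonal matrices above), but for special $q$ there can be extra solutions in which $\MM$ swaps $Q_\infty$ with some $Q_{\beta_0}$ or otherwise permutes the points non-trivially; chasing the constraints, such a solution exists precisely when a certain cube (coming from the $a^3,d^3$ entries and the normalisation of $\Lc$) forces $-1/2$, or an associated scalar, to be a cube in $\F_q$. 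Under the stated hypothesis this obstruction vanishes and only the diagonal solutions survive. I expect this case analysis — showing that the ``off-diagonal'' branches of the system are incompatible with $-1/2$ being a non-cube (and vacuous when $q$ is even, where cube-freeness questions behave differently and the extra branch collapses for parity reasons) — to be the main obstacle; the rest is bookkeeping with \eqref{eq2_M} and \eqref{eq3:ell0infdef}.
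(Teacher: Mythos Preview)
Your overall outline is right, and your third paragraph is essentially the paper's argument. The geometric approach in your first paragraph --- characterising $Q_\infty$ as the unique point of $\Lc$ lying on two osculating planes --- is a detour the paper does not take. You do not verify it, and verifying it would mean showing that the cubic $t^3-3\beta t^2-1=0$ never has exactly two $\F_q$-roots for any $\beta$, which is at least as much work as the direct computation. Moreover this uniqueness cannot hold hypothesis-free: by Lemma~\ref{lem3:stabil0infqm1} the stabiliser \emph{does} move $Q_\infty$ when $q$ is odd and $-1/2$ is a cube, so in that case some other $Q_\beta$ also lies on exactly two osculating planes.

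The paper goes straight to the computation you postpone to your third paragraph, and it is far shorter than you anticipate --- two lines, not a ``main obstacle''. Suppose $\psi\in G_q^{\Lc}$ sends $Q_\infty$ to $Q_\beta$ for some $\beta\in\F_q$. From \eqref{eq2_M} one has
\[
[0,0,1,0]\MM=[3ab^2,\ b^2c+2abd,\ ad^2+2bcd,\ 3cd^2],
\]
and requiring this proportional to $[1,0,\beta,1]$ yields $b^2c+2abd=0$, $ab^2=cd^2$, with $a,b,c,d\ne0$. If $q$ is even the first relation becomes $b^2c=0$, a contradiction. If $q$ is odd, normalise $b=1$; then $c=-2ad$ and $a=cd^2=-2ad^3$ force $d^3=-1/2$, contradicting the hypothesis that $-1/2$ is a non-cube. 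Hence $Q_\infty\psi=Q_\infty$, Lemma~\ref{lem3:stabilQinf} applies, and your second-paragraph computation (which matches the paper exactly) gives $d^3=1$. There is no need to set up or analyse the full polynomial system in $a,b,c,d$.
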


\begin{proof}
Let a projectivity $\psi\in G_q^{\Lc}$. We consider the case $Q_\infty \psi  =  Q_\beta$ for some $\beta\in\F_q$. The general form of a matrix $\MM$ corresponding to $\psi$ is given by \eqref{eq2_M}. We have $[0,0,1,0]\MM=[1,0,\beta,1]$
that implies $b^2c+2abd=0$, $ab^2=cd^2$, and $a,b,c,d \neq 0$. If $q$ is even, we have also $b^2c = 0$, contradiction.
Now consider the case $q$ odd. As $\MM$ is defined up to a factor of proportionality, we can put $b=1$.  From  $a=cd^2$ and $c+2ad=0$ we obtain $d^3=-1/2$, contradiction if $-1/2$ is not a cube in $\F_q$.

Thus, $Q_\infty \psi\ne Q_\beta$ with $\beta\in\F_q$. The only possible case is $Q_\infty \psi  =  Q_\infty$, see Lemma~\ref{lem3:stabilQinf}.
The matrix $\MM^{\Lc}$ must be the same form as $\MM^\infty$ \eqref{eq3:MM_Qinf} but the set of possible values of $d$ can be a proper subset of $\F_q^*$.
We should provide  $Q_0 \psi  =  Q_\beta$ for some $\beta\in\F_q$.
As $ [1,0,0,1]\MM^\infty = [1,0,0,d^3]$, it can happen only if $d^3=1$. 
\end{proof}

Remind that over $\F_q$, the equation $x^3=c$ has a unique solution, if $q\equiv  -1 \pmod3$, and 3 or $0$ solutions, if $q\equiv  1 \pmod3$, see\cite[Section 1.5(iv),(v)]{Hirs_PGFF}.

\begin{lemma}\label{lem3:stabil0infqm1}
\begin{description}
  \item[(i)] Let $q\equiv  -1 \pmod3$, $q$ odd. Then $G_q^{\Lc}$ has order $2$  and a matrix $\MM^{\Lc}$ corresponding to the non-trivial projectivity of $G_q^{\Lc}$ has the form \eqref{eq2_M} with
 \begin{equation*}
   a = \sqrt[3]{1/2},   ~b = 1,  ~ c = \sqrt[3]{2},   ~ d = -\sqrt[3]{1/2}.
 \end{equation*}
  \item[(ii)] Let $q\equiv  1 \pmod3$, $q$ odd and let  $-1/2$ be a cube in $\F_q$. Then $G_q^{\Lc}$ has order $12$  and is isomorphic to the group $A_4$. A matrix $\MM^\Lc$  of $G_q^\Lc$ has either the form \eqref{eq3:MM ell_0infty}
or the form \eqref{eq2_M} with
\begin{equation} \label{eq3:generalMatrix}
  a = \t{a cubic root of } 1/2,   ~ b = 1,  ~ c = -d/a^2,   ~ d = \t{a cubic root of } -1/2.
\end{equation}
\end{description}

\end{lemma}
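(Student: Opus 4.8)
The plan is to refine the analysis of Lemma \ref{lem3:stabil0inf} without assuming that $-1/2$ is a non-cube, and then to track exactly how many projectivities arise. First I would revisit the two cases of the argument in Lemma \ref{lem3:stabil0inf}: either $Q_\infty\psi = Q_\infty$ or $Q_\infty\psi = Q_\beta$ for some $\beta\in\F_q$. The ``fixing'' case still yields, as before, matrices of the form \eqref{eq3:MM ell_0infty} with $d^3 = 1$; for $q\equiv -1\pmod 3$ this gives only $d=1$ (the identity), while for $q\equiv 1\pmod 3$ it gives three projectivities forming a cyclic group of order $3$. For the ``swapping'' case $Q_\infty\psi = Q_\beta$, the equations $b^2c + 2abd = 0$ and $ab^2 = cd^2$ (with $a,b,c,d\ne 0$), after normalizing $b=1$, force $d^3 = -1/2$ and $c = -d/a^2$ together with $a^3 = 1/2$; this is solvable precisely when $-1/2$ (equivalently $1/2$) is a cube in $\F_q$, which is automatic when $q\equiv -1\pmod 3$ and is the standing hypothesis in part (ii) when $q\equiv 1\pmod 3$.

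For part (i), $q\equiv -1\pmod 3$: here every element of $\F_q^*$ is a cube and has a unique cube root, so $a = \sqrt[3]{1/2}$, $d = \sqrt[3]{-1/2} = -\sqrt[3]{1/2}$ (using $q$ odd so $-1$ is a cube uniquely and $\sqrt[3]{-1/2} = -\sqrt[3]{1/2}$), and $c = -d/a^2 = \sqrt[3]{1/2}\cdot\sqrt[3]{1/2}^{-2}\cdot\! \ldots$; a short computation gives $c = \sqrt[3]{2}$. I would then check $ad - bc \ne 0$ to confirm this is a genuine projectivity. Together with the identity from the fixing case, $\#G_q^{\Lc} = 2$. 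I should also verify directly that this matrix squares to (a scalar multiple of) the identity, confirming the group is $\cong \Z/2\Z$ — or simply note that a group of order $2$ has no other option.

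For part (ii), $q\equiv 1\pmod 3$ with $-1/2$ a cube: the fixing case contributes the cyclic group $C_3$ of diagonal matrices \eqref{eq3:MM ell_0infty}. The swapping case: since $q\equiv 1\pmod 3$, each of $1/2$ and $-1/2$ has exactly three cube roots, and for each choice of $a\in\sqrt[3]{1/2}$ and $d\in\sqrt[3]{-1/2}$ the value $c = -d/a^2$ is determined, giving $3\times 3 = 9$ matrices — but I expect these to collapse: fixing $a$ and letting $d$ range over its three values (the three $\omega^i d_0$ with $\omega$ a primitive cube root of unity) should, up to proportionality, give distinct projectivities, and varying $a$ by a cube root of unity rescales consistently, so I anticipate exactly $9$ distinct swapping projectivities, for a total of $3 + 9 = 12$. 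The main obstacle — and the step needing genuine care — is the isomorphism $G_q^{\Lc}\cong A_4$: I would identify the normal Sylow $2$-subgroup (the Klein four-group $V_4$) among the order-$12$ set, show the $C_3$ acts on it by cyclic permutation of its three involutions, and conclude $G_q^{\Lc} = V_4\rtimes C_3 \cong A_4$. Concretely, I expect the three involutions to be the three matrices of swapping type with $a^3 = 1/2$ and $d$ chosen so that the matrix is an involution (i.e.\ $d = -a$, paralleling part (i)), and the remaining six swapping matrices to be the order-$3$ and order-$6$\,—\,no, order-$3$\,—\,elements; verifying the multiplication closes and matches $A_4$'s presentation is the crux. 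Counting element orders ($1$ identity, $3$ involutions, $8$ elements of order $3$) and invoking the fact that $A_4$ is the unique group of order $12$ with this order profile would be the cleanest finish.
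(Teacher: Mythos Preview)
Your approach matches the paper's: split on whether $Q_\infty\psi=Q_\infty$ or $Q_\infty\psi=Q_\beta$, extract the constraints on $(a,b,c,d)$, count the resulting projectivities, and in part~(ii) identify $A_4$ via its element-order profile. The paper finishes part~(ii) exactly by your ``cleanest finish'': it verifies (by Magma) that the two nontrivial diagonal matrices have order~$3$, that among the nine swapping matrices three have order~$2$ and six have order~$3$, and then cites that $A_4$ is the unique group of order~$12$ with that profile. Your alternative route via an explicit $V_4\rtimes C_3$ would also work but is more labour than needed.

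One point needs tightening. The two equations you quote from $[0,0,1,0]\MM\sim[1,0,\beta,1]$, namely $b^2c+2abd=0$ and $ab^2=cd^2$, after setting $b=1$ give only $c=-2ad$ and $d^3=-1/2$; they do \emph{not} by themselves force $c=-d/a^2$ or $a^3=1/2$. Those come from the further requirement that $Q_0\psi\in\Lc$. The paper makes this explicit: it computes $[1,0,0,1]\MM=[a^3+1,\,a^2c+d,\,ac^2+d^2,\,c^3-1/2]$ and then treats two subcases. If $Q_0\psi=Q_\infty$ one gets $a^3=-1$ and $a^2c+d=0$, whence $ad-bc=0$, a contradiction. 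If $Q_0\psi=Q_{\beta'}$ one gets $a^2c+d=0$, i.e.\ $c=-d/a^2$, which combined with $c=-2ad$ yields $a^3=1/2$ (the paper reaches the same conclusion via the factorisation $(t+1)^2(2t-1)=0$ with $t=a^3$). You should insert the $Q_0$-image analysis and dispose of the $Q_0\psi=Q_\infty$ subcase explicitly; once that is done your argument is complete and in line with the paper's.
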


\begin{proof}
\begin{description}
  \item[(i)] Let a projectivity $\psi\in G_q^{\Lc}$ and let  $\MM^\Lc$ be a matrix corresponding to $\psi$.
\begin{description}
  \item[(a)] Let $Q_\infty \psi  =  Q_\infty$. We have  $\MM^\Lc=\MM^\infty$, see Lemma \ref{lem3:stabilQinf} and \eqref{eq3:MM_Qinf}.  In~\eqref{eq3:MM_Qinf} we have $d=1$, so $\psi$ is the identity projectivity.

  \item[(b)] Let $Q_\infty \psi  =  Q_\beta$, $\beta\in\F_q$. The general form of  $\MM$ is given by \eqref{eq2_M}. We have
$[0,0,1,0]\MM=[1,0,\beta,1]$
that implies $b^2c+2abd=0$, $ab^2=cd^2$, and $a,b,c,d \neq 0$. As $\MM$ is defined up to a factor of proportionality, we can put $b=1$.  From $c+2ad=0$, $a=cd^2$,  we obtain $d^3=-1/2$.

Now for $b=1$ and $d^3=-1/2$, we consider  $Q_0 \psi$. The following holds
\begin{equation*}
[1,0,0,1]\MM = [a^3+1,~a^2c+d,~ac^2+d^2,~c^3-1/2].
\end{equation*}
\begin{description}
  \item[(b1)] Let $Q_0 \psi = Q_\infty$. Then $a^3+1=0$, $a^2c+d=0$, $ad-bc=ad-(-d/a^2)=0$, contradiction.
  \item[(b2)] Let   $Q_0 \psi = Q_\beta$, $\beta\in\F_q$. Then $a^3+1=c^3-1/2$ and $a^2c+d=0$ that implies $c = -d/a^2$, $2a^9+3a^6-1=0$. Putting $t=a^3$ we obtain $(t+1)^2(2t-1)=0$.
    \begin{description}
  \item[(b21)] Let $t+1=0$. Then $a^3= -1$  So, $c = ad$ and $ad-bc=0$, contradiction.
  \item[(b22)] Let $2t-1=0$. Then $t=1/2$ and
  $a=\sqrt[3]{1/2}$. So, $c = \sqrt[3]{2}$ and $ad-bc=-1- \sqrt[3]{2} \neq 0$.
\end{description}
\end{description}
\end{description}
  \item[(ii)] The assertions can be proved similarly to case (i) taking into account that if $-1/2$ is a cube then $1/2$ also is a cube. By direct computation using the computer algebra system Magma \cite{Magma}, a non trivial matrix of the form  \eqref{eq3:MM ell_0infty} has order three, whereas of the nine matrices of the form \eqref{eq3:generalMatrix}, three have order two and the other six have order three.
The only group of order $12$ having  three elements of order two and eight elements of order three is $A_4$, see \cite{groupbook}. \qedhere
 \end{description}
\end{proof}

\begin{theorem}\label{th3:main}
Let the $\EnG$-line $\Lc$  be as in \eqref{eq3:ell0infdef}, $q\equiv \xi\pmod3$. Let $G_q^{\Lc}$ be the subgroup of $G_q$ fixing  $\Lc$ and let $\Os_\Lc$ be the orbit of $\Lc$ under $G_q$. Then the sizes of $G_q^{\Lc}$ and $\Os_\Lc$ are as follows:
\begin{align*}\label{eq3:main}
  &(i)~ \xi=1,~q\text{ is even or }-\frac{1}{2}\text{ is a non-cube in }\F_q.
  && \#G_q^{\Lc}=3, ~ \#\Os_\Lc=\frac{1}{3}(q^3-q).\db\\
  &(ii)~ \xi=1,~q\text{ is odd and }-\frac{1}{2}\text{ is a cube in }\F_q. && \#G_q^{\Lc}=12,~\#\Os_\Lc=\frac{1}{12}(q^3-q),\db\\
  &~ \; && \#G_q^{\Lc} \cong A_4.\db\\
  &(iii)~ \xi=-1,~q\text{ is even}. && \#G_q^{\Lc}=1,~ \#\Os_\Lc=q^3-q.\db\\
  &(iv)~ \xi=-1,~q\text{ is odd}.  && \#G_q^{\Lc}=2 ,~ \#\Os_\Lc=\frac{1}{2}(q^3-q).
  \end{align*}
\end{theorem}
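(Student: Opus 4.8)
The plan is to assemble Theorem~\ref{th3:main} directly from the structural lemmas already proved, splitting into cases according to the parity of $q$ and the value of $\xi$, and in each case first pinning down $\#G_q^{\Lc}$ and then obtaining $\#\Os_\Lc$ by the orbit-stabilizer theorem. Recall from Theorem~\ref{th2_Hirs}(i) that $\#G_q = \#\mathrm{PGL}(2,q) = q^3-q$ for $q\ge5$, so the orbit-stabilizer relation gives $\#\Os_\Lc = (q^3-q)/\#G_q^{\Lc}$ in every case; thus the whole theorem reduces to computing the order of the stabilizer.

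First I would handle the cases where $-1/2$ is a non-cube in $\F_q$ or $q$ is even (cases (i) and (iii)). Here Lemma~\ref{lem3:stabil0inf} applies verbatim: a matrix $\MM^{\Lc}$ must have the diagonal form \eqref{eq3:MM ell_0infty} with $d$ a cube root of unity in $\F_q^*$. The number of such $d$ is exactly $\gcd(3,q-1)$: if $q\equiv1\pmod3$ there are $3$ cube roots of unity, giving $\#G_q^{\Lc}=3$ and $\#\Os_\Lc=\tfrac13(q^3-q)$ (case (i), noting that $q$ even forces $q\equiv1\pmod3$ is \emph{not} automatic, so one must observe that in case (iii), $\xi=-1$, only $d=1$ survives); if $q\equiv-1\pmod3$ the only cube root of unity is $d=1$, so $\#G_q^{\Lc}=1$ and $\#\Os_\Lc=q^3-q$ (case (iii)). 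One subtlety to flag: Lemma~\ref{lem3:stabil0inf} is hypothesized for ``$q$ even or $-1/2$ a non-cube,'' and when $q\equiv-1\pmod3$ is odd, $-1/2$ is automatically a non-cube (every element is a unique cube when $\gcd(3,q-1)=1$ only forces cubes to be bijective — here one needs that $-1/2$ has a cube root, which it does, so one should instead invoke Lemma~\ref{lem3:stabil0infqm1}(i) directly for case (iv)).

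For case (iv), $\xi=-1$ and $q$ odd, Lemma~\ref{lem3:stabil0infqm1}(i) states exactly that $G_q^{\Lc}$ has order $2$; orbit-stabilizer then gives $\#\Os_\Lc=\tfrac12(q^3-q)$. For case (ii), $\xi=1$, $q$ odd, and $-1/2$ a cube, Lemma~\ref{lem3:stabil0infqm1}(ii) states that $G_q^{\Lc}$ has order $12$ and is isomorphic to $A_4$; orbit-stabilizer gives $\#\Os_\Lc=\tfrac{1}{12}(q^3-q)$. So cases (ii) and (iv) are immediate corollaries of Lemma~\ref{lem3:stabil0infqm1}, and cases (i) and (iii) follow from Lemma~\ref{lem3:stabil0inf} together with the count of cube roots of unity. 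I would also remark at the outset that $\Lc$ is genuinely an $\EnG$-line by Lemma~\ref{lem3:l0infInEnG}, so that $\Os_\Lc\subseteq\OO_6$ and the statement is about a legitimate $\OO_6$-orbit.

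There is essentially no hard step here: the theorem is a bookkeeping consolidation of the preceding lemmas via orbit-stabilizer. The only point demanding care is the case analysis in case (i) versus (iii): one must be sure that when $q$ is even one correctly determines $\gcd(3,q-1)$ (which is $3$ iff $q\equiv1\pmod3$, i.e. iff $\xi=1$), so that the ``$q$ even'' hypothesis of Lemma~\ref{lem3:stabil0inf} is being used to rule out the off-diagonal projectivities while the value of $\xi$ governs how many diagonal ones remain. I would write the proof as four short paragraphs mirroring the four items, citing Theorem~\ref{th2_Hirs}(i) for $\#G_q=q^3-q$ and the relevant stabilizer lemma in each.
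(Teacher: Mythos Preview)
Your proposal is correct and mirrors the paper's own proof: both derive the stabilizer orders from Lemmas~\ref{lem3:stabil0inf} and~\ref{lem3:stabil0infqm1} together with the count of cube roots of unity in $\F_q^*$, and then read off the orbit lengths via orbit--stabilizer with $\#G_q=q^3-q$. One small wording slip to clean up: when $q\equiv-1\pmod3$ is odd, $-1/2$ is automatically a \emph{cube} (cubing is a bijection), not a non-cube, so Lemma~\ref{lem3:stabil0inf} does not apply there --- but you already catch this and correctly route case~(iv) through Lemma~\ref{lem3:stabil0infqm1}(i).
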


\begin{proof}
The sizes of  $G_q^{\Lc}$ follow from Lemmas \ref{lem3:stabil0inf}, \ref{lem3:stabil0infqm1} and the results of \cite[Section 1.5 (ii),(iii)]{Hirs_PGFF}. By \cite[Lemma 2.44(ii)]{Hirs_PGFF}, the size of  $\#\Os_\Lc$ is $\#G_q/\#G_q^{\Lc}$.
\end{proof}



\section{A family of $\EnG$-lines $\ell_\mu$, $\mu\in\F_q^{*} \setminus\{1, 1/9\}$}\label{sec:linesLmu}

Let $\mu\in\F_q$. Let $R_{\mu,\gamma}$  be the point such that
\begin{equation}\label{eq2:R}
  R_{\mu,\gamma}=\Pf(\gamma,\mu,\gamma,1),~\gamma\in\F_q^+;~R_{\mu,\infty}=\Pf(1,0,1,0).
\end{equation}
We consider the line $\ell_{\mu} = \overline{R_{\mu,0}R_{\mu,\infty}}$ through $R_{\mu,0}$ and $R_{\mu,\infty}$.
\begin{equation}\label{eq2:ellmu}
\ell_{\mu} = \overline{\Pf(0,\mu,0,1)\Pf(1,0,1,0)}= \{\Pf(\gamma,\mu,\gamma,1)|\gamma\in\F_q^+,~\mu\t{ is fixed},~\mu\in\F_q\}.
\end{equation}
The points of the line $\ell_{\mu}$ satisfy the equations
\begin{equation}\label{eq2:ellmueq}
x_0=x_2,~ x_1=\mu x_3,~\mu\t{ is fixed},~\mu\in\F_q.
\end{equation}
By  \eqref{eq2:ellmu}, the coordinate vector  $L_\mu$ of $\ell_{\mu}$  is
\begin{equation}\label{eq2:coordVectElMu}
  L_\mu = (\mu,0,1,-\mu,0,1).
\end{equation}

\begin{lemma}\label{lem2:L0L1}
Let $q\ge5$. The line $\ell_0$ is an unisecant of the cubic $\C$ not in an osculating plane. The line $\ell_1$ is a tangent if $q$ is even, a real chord if $q$ is odd.
\end{lemma}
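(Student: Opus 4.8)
The plan is to analyze the line $\ell_\mu$ for the two special parameter values $\mu=0$ and $\mu=1$ separately, in each case computing where $\ell_\mu$ meets the cubic $\C$ and, when it is a unisecant, checking whether the unique common point's osculating plane contains the line.

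First I would substitute the parametrization $P(t)=\Pf(t^3,t^2,t,1)$ into the defining equations \eqref{eq2:ellmueq} of $\ell_\mu$. The condition $x_0=x_2$ gives $t^3=t$, i.e. $t(t^2-1)=0$, so the only candidates for points of $\C\cap\ell_\mu$ among the affine points are $t\in\{0,1,-1\}$; the point $P(\infty)=\Pf(1,0,0,0)$ fails $x_0=x_2$ (it would force $1=0$), so it is never on $\ell_\mu$. For $\mu=0$: the condition $x_1=0$ on the line forces $t^2=0$, hence only $t=0$ survives, giving $P(0)=\Pf(0,0,0,1)=R_{0,0}$; thus $\ell_0$ is a unisecant. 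For $\mu=1$: the condition $x_1=x_3$ becomes $t^2=1$, so $t=1$ and $t=-1$ both qualify when they are distinct, i.e. when $q$ is odd (two real points, a real chord), while for $q$ even $t=1$ and $t=-1$ coincide, giving a single point with multiplicity two — a tangent. One should double-check via \eqref{eq2:coordVectElMu} that $L_1=(1,0,1,-1,0,1)$ indeed matches the chord form \eqref{eq1:chord} with $a_1=t_1+t_2=0$, $a_2=t_1t_2=-1$: that gives $(a_2^2,a_1a_2,a_1^2-a_2,a_2,-a_1,1)=(1,0,1,-1,0,1)$, confirming the claim and, for even $q$ where $-1=1$, that $x^2-a_1x+a_2=x^2-1=(x-1)^2$ has the single root $1$, a tangent.

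The remaining point is to show $\ell_0$ does not lie in the osculating plane at its unique contact point $P(0)$. By \eqref{eq2:osc_plane}, $\pi_{\mathrm{osc}}(0)=\boldsymbol{\pi}(1,0,0,0)$, the plane $x_0=0$. Every point $\Pf(\gamma,0,\gamma,1)$ of $\ell_0$ with $\gamma\ne0$ has $x_0=\gamma\ne0$, so $\ell_0\not\subset\pi_{\mathrm{osc}}(0)$; equivalently, $R_{0,\infty}=\Pf(1,0,1,0)$ is not on $x_0=0$. Hence $\ell_0$ is a unisecant not contained in an osculating plane, as asserted. I do not anticipate a genuine obstacle here — the argument is a short substitution calculation — but the one place to be careful is the even-versus-odd dichotomy for $\mu=1$: one must argue not merely that $\ell_1$ passes through $P(1)$ (and through $P(-1)$) but that in characteristic $2$ the intersection multiplicity at $P(1)$ is exactly two so that $\ell_1$ is genuinely a tangent rather than some other unisecant, which is precisely what the match with the chord coordinate vector \eqref{eq1:chord} and the factorization $x^2-1=(x-1)^2$ certifies.
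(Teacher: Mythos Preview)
Your proposal is correct and follows the same substitution approach the paper intends: the paper's own proof is the single line ``the assertions follow from \eqref{eq1:Pt}, \eqref{eq2:osc_plane}, \eqref{eq2:ellmu}'', and you have simply written out those computations explicitly. One small remark: the lemma asserts $\ell_0$ lies in \emph{no} osculating plane, while you only rule out $\pi_{\mathrm{osc}}(0)$; this is nonetheless sufficient because $P(0)\in\ell_0$, and from \eqref{eq2:osc_plane} the condition $P(0)\in\pi_{\mathrm{osc}}(t)$ forces $t^3=0$ (and $P(0)\notin\pi_{\mathrm{osc}}(\infty)$), so $\pi_{\mathrm{osc}}(0)$ is the only candidate.
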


\begin{proof}
The assertions follow from \eqref{eq1:Pt}, \eqref{eq2:osc_plane}, \eqref{eq2:ellmu}.
\end{proof}


\begin{lemma}\label{lem2:EllMu}
Let  $\mu\in\F_q^*\setminus\{1\}$. For all $q\ge5$, the line $\ell_\mu$ \eqref{eq2:ellmu} has the following properties:
\begin{description}
  \item[(i)]
The line $\ell_{\mu}$ is an external line to the twisted cubic $\C$.

  \item[(ii)]
The line $\ell_{\mu}$ is not a chord of the twisted cubic $\C$.

  \item[(iii)] The line $\ell_{\mu}$ does not lie in the osculating planes $\pi_\t{osc}(\infty)$, $\pi_\t{osc}(0)$. Moreover,
\begin{equation}\label{eq2:R_in notin}
   R_{\mu,\gamma}\notin\pi_\t{osc}(\infty),~\gamma\in\F_q;~ R_{\mu,\infty}\notin\pi_\t{osc}(0),~R_{\mu,\infty}\in\pi_\t{osc}(\infty).
\end{equation}

  \item[(iv)]
 Let $R_{\mu,\infty}\notin\pi_\t{osc}(t)$, $\forall t\in\F_q$. Then the line $\ell_{\mu}$ is not contained in any osculating plane.

  \item[(v)]  We have $R_{\mu,\infty}\in\pi_\t{osc}(t)$, $t\in\F_q$, if and only if $3t^2+1=0$.
\end{description}
\end{lemma}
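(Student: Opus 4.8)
The plan is to prove the five assertions essentially in order, since each builds on computations with the parametrization $R_{\mu,\gamma}=\Pf(\gamma,\mu,\gamma,1)$ from \eqref{eq2:R} and the osculating plane formula \eqref{eq2:osc_plane}. For (i), I would substitute a generic cubic point $P(t)=\Pf(t^3,t^2,t,1)$ into the line equations \eqref{eq2:ellmueq}: $x_0=x_2$ forces $t^3=t$, i.e.\ $t\in\{0,1,-1\}$, and then $x_1=\mu x_3$ forces $t^2=\mu$; since $\mu\notin\{0,1\}$ this has no solution in $\{0,1,-1\}$. One must also check $P(\infty)=\Pf(1,0,0,0)$ does not satisfy $x_0=x_2$ unless $\gamma=0$, which then contradicts the first coordinate; so $\ell_\mu\cap\C=\emptyset$. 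For (ii), a chord of $\C$ has coordinate vector of the form \eqref{eq1:chord}; comparing with $L_\mu=(\mu,0,1,-\mu,0,1)$ from \eqref{eq2:coordVectElMu}, the last coordinate $1$ fixes the normalization, so we would need $a_2^2=\mu$, $a_1a_2=0$, $a_1^2-a_2=1$, $a_2=-\mu$, $-a_1=0$; then $a_1=0$ gives $a_2=-\mu$ and $a_2^2=\mu$ gives $\mu^2=\mu$, so $\mu\in\{0,1\}$, excluded. (Equivalently one may note a non-degenerate external line meeting $\C$ in no real point could still be an imaginary chord, but the same coordinate comparison rules this out.)

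For (iii), I would directly evaluate: $\pi_\t{osc}(\infty)=\boldsymbol\pi(0,0,0,1)$ has equation $x_3=0$, and a point $R_{\mu,\gamma}$ with $\gamma\in\F_q$ has last coordinate $1\neq0$, so $R_{\mu,\gamma}\notin\pi_\t{osc}(\infty)$; while $R_{\mu,\infty}=\Pf(1,0,1,0)$ has $x_3=0$, so $R_{\mu,\infty}\in\pi_\t{osc}(\infty)$. For $\pi_\t{osc}(0)=\boldsymbol\pi(1,0,0,0)$ with equation $x_0=0$: $R_{\mu,\infty}$ has $x_0=1\neq0$, so $R_{\mu,\infty}\notin\pi_\t{osc}(0)$. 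Since $\ell_\mu$ contains $R_{\mu,\infty}$ and some $R_{\mu,\gamma}$ with $\gamma\in\F_q$, and the former lies outside $\pi_\t{osc}(0)$ while (taking $\gamma\neq0$, say $\gamma=0$) $R_{\mu,0}=\Pf(0,\mu,0,1)$ lies outside $\pi_\t{osc}(\infty)$, the line lies in neither of these two osculating planes. This gives \eqref{eq2:R_in notin}.

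For (iv), the key observation is that a line lying in an osculating plane $\pi_\t{osc}(t)$, $t\in\F_q$, must have all its points in that plane, in particular $R_{\mu,\infty}$; contrapositively, if $R_{\mu,\infty}\notin\pi_\t{osc}(t)$ for every $t\in\F_q$, then (combined with (iii), which handles $t=\infty$) $\ell_\mu$ lies in no osculating plane at all. This is a one-line deduction once (iii) is in hand. Finally for (v), I substitute $R_{\mu,\infty}=\Pf(1,0,1,0)$ into $\pi_\t{osc}(t)=\boldsymbol\pi(1,-3t,3t^2,-t^3)$ for $t\in\F_q$: the incidence condition is $1\cdot1+(-3t)\cdot0+3t^2\cdot1+(-t^3)\cdot0=0$, i.e.\ $3t^2+1=0$, as claimed. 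The computations are all routine substitutions; the only place requiring a moment's care is keeping track, in (i) and (ii), of the exclusion $\mu\in\{0,1\}$ (and noting that the statement is only claimed for $\mu\in\F_q^*\setminus\{1\}$), and in (iv) making explicit that membership of a single well-chosen point of $\ell_\mu$ in $\pi_\t{osc}(t)$ is necessary for containment — so there is no real obstacle beyond bookkeeping.
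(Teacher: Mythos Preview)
Your proposal is correct and follows essentially the same approach as the paper: direct substitution of cubic points into the line equations for (i), comparison of the chord coordinate vector \eqref{eq1:chord} with $L_\mu$ for (ii), and straightforward incidence checks with \eqref{eq2:osc_plane} for (iii)--(v). Aside from a harmless slip in (iii) (``taking $\gamma\neq0$, say $\gamma=0$''), your write-up simply spells out in more detail what the paper compresses into one-line references.
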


\begin{proof}
\begin{description}
  \item[(i)]
By \eqref{eq1:Pt} and \eqref{eq2:ellmu}, no point of $\ell_{\mu}$ belongs to $\C$.

  \item[(ii)] Comparing the coordinate vectors \eqref{eq1:chord} and \eqref{eq2:coordVectElMu} one sees $a_2^2=\mu$ and $a_2=-\mu$ that implies $\mu=\mu^2$, contradiction as $\mu\in\F_q^*\setminus\{1\}$.

  \item[(iii),(v)]
The assertions follow from \eqref{eq2:osc_plane}, \eqref{eq2:R}.

  \item[(iv)]
 Together with \eqref{eq2:R_in notin}, the hypothesis means that for any osculating plane there is a point of  $\ell_\mu$  not belonging to the plane. The assertion follows. \qedhere
\end{description}
\end{proof}

\begin{lemma}\label{lem4:axis}
  Let $q\not\equiv0\pmod3$. Then for the line $\ell_\mu$ \eqref{eq2:ellmu}
the following holds:
  \begin{description}
    \item[(i)] A line $\ell_\mu$ is an imaginary axis if and only if  $q$ is odd, $q\equiv-1\pmod3$, $\mu={1/9}$;
    \item[(ii)] A line $\ell_\mu$ is a real axis if and only  if $q$ is odd, $q\equiv 1\pmod3$, $\mu={1/9}$.
\end{description}
\end{lemma}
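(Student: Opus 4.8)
The plan is to compare the coordinate vector $L_\mu=(\mu,0,1,-\mu,0,1)$ of $\ell_\mu$, given in \eqref{eq2:coordVectElMu}, with the coordinate vector $L_{\t{ax}}$ of an axis, given in \eqref{eq1:axis}, namely $L_{\t{ax}}=(\beta_2^2,\beta_1\beta_2,3\beta_2,(\beta_1^2-\beta_2)/3,-\beta_1,1)$, where $\beta_1=t_1+t_2$ and $\beta_2=t_1t_2$ and the type of the axis is governed by the number of roots in $\F_q$ of $x^2-\beta_1x+\beta_2$. Since both vectors are normalized to have last coordinate $1$, the line $\ell_\mu$ is an axis if and only if there exist $\beta_1,\beta_2\in\F_q$ with
\begin{equation*}
\beta_2^2=\mu,\quad \beta_1\beta_2=0,\quad 3\beta_2=1,\quad (\beta_1^2-\beta_2)/3=-\mu,\quad -\beta_1=0.
\end{equation*}
First I would read off $\beta_1=0$ from the fifth equation; then $\beta_1\beta_2=0$ is automatic, and $3\beta_2=1$ forces $\beta_2=1/3$ (here $q\not\equiv0\pmod3$ is used so that $3\ne0$). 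Substituting into $\beta_2^2=\mu$ gives $\mu=1/9$, and one checks the remaining equation $(\beta_1^2-\beta_2)/3=-\mu$ reads $-1/9=-\mu$, i.e. $\mu=1/9$, consistent. So $\ell_\mu$ is an axis precisely when $\mu=1/9$, and in that case $\beta_1=0$, $\beta_2=1/3$.

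The second step is to determine the type of this axis, i.e. to count the roots of $x^2-\beta_1x+\beta_2=x^2+1/3$ in $\F_q$. Multiplying by $3$, this is equivalent to counting roots of $3x^2+1=0$, which is exactly the content of Lemma~\ref{lem1:equation}: for $q$ odd it has no roots when $q\equiv-1\pmod3$ and two distinct roots when $q\equiv1\pmod3$. Hence for $\mu=1/9$ and $q$ odd, $\ell_\mu$ is an imaginary axis when $q\equiv-1\pmod3$ and a real axis when $q\equiv1\pmod3$, which gives parts (i) and (ii). It remains to dispose of even $q$: when $q$ is even, $1/9=1$ in $\F_q$ (since $9\equiv1\pmod 2$, indeed $3=1$ so $1/9 = 1$), so the only candidate value $\mu=1/9$ coincides with $\mu=1$, and by Lemma~\ref{lem2:L0L1} the line $\ell_1$ is a tangent, which is a generator of $\Gamma$, not a real or imaginary axis; alternatively one notes directly that $3x^2+1=x^2+1=(x+1)^2$ has the repeated root $x=1$ in characteristic $2$, corresponding to a generator (tangent), not an axis. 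Either way, no $\ell_\mu$ is a real or imaginary axis for even $q$, matching the hypotheses ``$q$ odd'' in both (i) and (ii).

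I do not anticipate a genuine obstacle here: the argument is a direct comparison of two six-dimensional coordinate vectors followed by an invocation of Lemma~\ref{lem1:equation}. The only point requiring a little care is the ``only if'' direction — one must use that the Plücker coordinate vector of a line is determined up to a scalar, so that matching $L_\mu$ against $L_{\t{ax}}$ after normalizing the last coordinate to $1$ is legitimate and loses nothing; and one must not forget the even-$q$ bookkeeping so that the stated equivalences are biconditional with the parity hypothesis included. A secondary subtlety is making sure that a ``generator (tangent)'' of $\Gamma$ is correctly excluded from the notions ``real axis'' and ``imaginary axis'' as defined after \eqref{eq1:axis}; this is immediate from that definition, since a tangent corresponds to the case of exactly one root.
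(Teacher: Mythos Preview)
Your proposal is correct and follows essentially the same approach as the paper: compare the coordinate vectors \eqref{eq1:axis} and \eqref{eq2:coordVectElMu} to force $\beta_1=0$, $\beta_2=1/3$, $\mu=1/9$, and then invoke Lemma~\ref{lem1:equation} on $x^2+1/3$. Your treatment is in fact slightly more complete than the paper's, since you explicitly dispose of the even-$q$ case (where $1/9=1$ and $x^2+1/3=(x+1)^2$ yields a tangent/generator rather than a real or imaginary axis), which the paper's proof leaves implicit.
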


\begin{proof}
 Considering   the vectors \eqref{eq1:axis} and \eqref{eq2:coordVectElMu}, we obtain $\beta_2^2 =\mu$, $\beta_2 = 1/3$, $\beta_1=0$. This implies $\mu = 1/9$. By Lemma \ref{lem1:equation}, the equation $x^2-\beta_1 x + \beta_2= x^2+ 1/3= 0$ has 2 distinct roots if $q$ is odd, $q\equiv 1\pmod3$, or 0 roots if $q$ is odd, $q\equiv-1\pmod3$. The assertions follow.
\end{proof}

\begin{lemma}\label{lem4:EnGEllMu}
  Let  $\mu\in\F_q^*\setminus\{1\}$. Let $q\ge5$. The line $\ell_{\mu}$ \eqref{eq2:ellmu} is an $\EnG$-line in the following cases:
  \begin{description}
  \item[(i)] $q$ is even;
  \item[(ii)] $q\equiv0\pmod3$;
 \item[(iii)]  $q\not\equiv0\pmod3$, $q$ is odd,  $\mu\ne1/9$.
\end{description}
\end{lemma}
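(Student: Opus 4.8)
\textbf{Proof plan for Lemma \ref{lem4:EnGEllMu}.}
The plan is to verify, in each of the three cases, the three defining requirements for an $\EnG$-line: that $\ell_\mu$ is external to $\C$, that it is not a chord, and that it lies in no osculating plane (and, in the cases where a null polarity exists, that it is not an axis). The external and non-chord properties are already handled uniformly in Lemma \ref{lem2:EllMu}(i),(ii) for all $\mu\in\F_q^*\setminus\{1\}$ and all $q\ge5$, so in every case the only work is to rule out containment in an osculating plane and, when $q\not\equiv0\pmod3$, to rule out that $\ell_\mu$ is an axis.

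For case (iii), $q$ odd with $q\not\equiv0\pmod3$ and $\mu\ne1/9$: by Lemma \ref{lem2:EllMu}(iv),(v), $\ell_\mu$ lies in some osculating plane only if $3t^2+1=0$ has a root $t$ with $R_{\mu,\infty}\in\pi_\t{osc}(t)$; one checks from \eqref{eq2:osc_plane} and \eqref{eq2:R} that $R_{\mu,\infty}\in\pi_\t{osc}(t)$ also forces, via the remaining coordinates of $\pi_\t{osc}(t)$, the condition $\mu=1/9$ (indeed $3t^2+1=0$ gives $t^2=-1/3$, and substituting into the constraint that all points $R_{\mu,\gamma}$ lie in $\pi_\t{osc}(t)$ yields $\mu=1/9$). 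Since $\mu\ne1/9$ is assumed, no such plane exists, so by Lemma \ref{lem2:EllMu}(iv) the line is in no osculating plane. Finally, Lemma \ref{lem4:axis}(i),(ii) shows $\ell_\mu$ is an axis (real or imaginary) only when $\mu=1/9$, which is excluded; hence $\ell_\mu$ is an $\EnG$-line.

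For case (i), $q$ even: here the osculating developable $\Gamma$ is a cubic developable (since $q\not\equiv0\pmod3$ for $q$ even as well, unless $q=2$, but $q\ge5$), and there is no null polarity of the form \eqref{eq2:null_pol}, so ``axis'' is not among the forbidden types via $\A$; nonetheless I must still exclude containment in an osculating plane directly. By Lemma \ref{lem2:EllMu}(iii) the planes $\pi_\t{osc}(\infty)$ and $\pi_\t{osc}(0)$ are already excluded, and for $t\in\F_q^*$ one checks using \eqref{eq2:osc_plane} and \eqref{eq2:R} whether two distinct points of $\ell_\mu$ can lie in $\pi_\t{osc}(t)$; in characteristic $2$ the coefficient $-3t=t$ and $3t^2=t^2$, and the resulting linear equations in the coordinates of $R_{\mu,\gamma}$ force contradictory conditions on $\gamma$ (the argument parallels the one for $\Lc$ in Lemma \ref{lem3:l0infInEnG}). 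Thus $\ell_\mu$ is external, not a chord, and in no osculating plane, hence an $\EnG$-line.

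For case (ii), $q\equiv0\pmod3$: now $\Gamma$ is a pencil of planes, all passing through a common line, and from \eqref{eq2:osc_plane} with $3\equiv0$ we have $\pi_\t{osc}(t)=\boldsymbol\pi(1,0,0,-t^3)$ for $t\in\F_q$ and $\pi_\t{osc}(\infty)=\boldsymbol\pi(0,0,0,1)$. A point $\Pf(x_0,x_1,x_2,x_3)$ lies in $\pi_\t{osc}(t)$ iff $x_0=t^3x_3$; since $t\mapsto t^3$ is a bijection on $\F_q$ when $q\equiv0\pmod3$, the point $R_{\mu,\gamma}=\Pf(\gamma,\mu,\gamma,1)$ lies in $\pi_\t{osc}(t)$ iff $t^3=\gamma$, and for two distinct points $R_{\mu,\gamma_1},R_{\mu,\gamma_2}$ of $\ell_\mu$ with $\gamma_1\ne\gamma_2$ this cannot hold for the same $t$; the point $R_{\mu,\infty}=\Pf(1,0,1,0)$ lies in $\pi_\t{osc}(t)$ only if $1=0$, and is not in $\pi_\t{osc}(\infty)$, while $R_{\mu,\gamma}\in\pi_\t{osc}(\infty)$ forces $x_3=0$, impossible. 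So $\ell_\mu$ meets no osculating plane in two points, i.e. lies in none; being external and not a chord, it is an $\EnG$-line. (No axis condition arises here since \eqref{eq2:null_pol} requires $q\not\equiv0\pmod3$, and the class partition in Theorem \ref{th2_Hirs}(ii) treats $\OO_6$ directly.)

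\textbf{Main obstacle.} The only genuinely non-routine point is, in case (iii), showing that $R_{\mu,\infty}\in\pi_\t{osc}(t)$ together with $3t^2+1=0$ actually forces $\mu=1/9$ rather than merely being consistent with it — i.e. coupling the constraint from the single point $R_{\mu,\infty}$ with the constraint that the whole line $\ell_\mu$ would then have to lie in that plane; once one observes (via Lemma \ref{lem2:EllMu}(iv)) that containment in an osculating plane is equivalent to $R_{\mu,\infty}$ together with all $R_{\mu,\gamma}$ lying in a common $\pi_\t{osc}(t)$, the algebra collapses quickly. Everything else is a direct substitution into \eqref{eq2:osc_plane}, \eqref{eq1:axis} and \eqref{eq2:coordVectElMu}.
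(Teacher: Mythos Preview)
Your overall approach matches the paper's: use Lemma~\ref{lem2:EllMu} to reduce everything to the osculating-plane question (and the axis question), then handle each case by analyzing $3t^2+1=0$. Case~(iii) is essentially the paper's argument. However, there are two concrete errors.

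\textbf{Case (i), even $q$.} Your claim that ``there is no null polarity of the form \eqref{eq2:null_pol}'' is false: \eqref{eq2:null_pol} is stated for $q\not\equiv0\pmod3$, which includes all even $q\ge8$, and axes in the sense of \eqref{eq1:axis} do exist. So you cannot dismiss the axis condition. The fix is short: either invoke Lemma~\ref{lem4:axis} (whose ``only if'' direction says $\ell_\mu$ is an axis only when $q$ is odd), or note directly that comparing \eqref{eq1:axis} with \eqref{eq2:coordVectElMu} forces $\beta_2=1/3=1$ and hence $\mu=\beta_2^2=1$ in characteristic~$2$, which is excluded. The paper's own treatment of (i) is also terse, but it relies on Lemma~\ref{lem4:axis} being available; you instead asserted a false reason for skipping it. Also, for the osculating-plane part the paper does not run a generic argument ``parallel to Lemma~\ref{lem3:l0infInEnG}'': it uses that $3t^2+1=0$ forces $t=1$, then checks that $R_{\mu,\gamma}\in\pi_\t{osc}(1)$ gives $\gamma+\mu+\gamma+1=0$, i.e.\ $\mu=1$, contradiction. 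Your sketch gestures at this but does not pin it down.

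\textbf{Case (ii), $q\equiv0\pmod3$.} You assert $R_{\mu,\infty}\notin\pi_\t{osc}(\infty)$, but this is wrong: $R_{\mu,\infty}=\Pf(1,0,1,0)$ has last coordinate $0$, so it \emph{does} lie in $\pi_\t{osc}(\infty)=\boldsymbol\pi(0,0,0,1)$; this is even recorded in \eqref{eq2:R_in notin}. The correct statement (which suffices) is that $R_{\mu,\gamma}\notin\pi_\t{osc}(\infty)$ for $\gamma\in\F_q$, so $\ell_\mu\not\subset\pi_\t{osc}(\infty)$. The paper avoids your direct computation entirely: it simply observes that $3t^2+1=0$ becomes $1=0$ in characteristic~$3$, so Lemma~\ref{lem2:EllMu}(v) gives $R_{\mu,\infty}\notin\pi_\t{osc}(t)$ for all $t\in\F_q$, and Lemma~\ref{lem2:EllMu}(iv) finishes. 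The paper then separately notes that the axis of the pencil $\Gamma$ is the line $x_0=x_3=0$, visibly not an $\ell_\mu$; your dismissal of the axis condition here is acceptable in spirit (once $\ell_\mu$ lies in no osculating plane it cannot be the pencil's axis), but you should say that rather than appeal to Theorem~\ref{th2_Hirs}(ii).
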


  \begin{proof}
  We prove some properties of $\ell_{\mu}$ that are not included in Lemma \ref{lem2:EllMu}.
  By Lemma~\ref{lem2:EllMu}, it is sufficient to consider only the case $3t^2+1=0$.
 \begin{description}

   \item[(i)]  For even $q$, the equality $3t^2+1=0$ gives $t=1$.  Assume that $R_{\mu,\gamma}\in\pi_\t{osc}(1)$. Then, for even $q$,  by \eqref{eq2:osc_plane}, \eqref{eq2:R},   $1+\gamma +\mu+\gamma=0$ that implies  $\mu=1$, contradiction.
   \item[(ii)] The equality $3t^2+1=0$ comes to $0=1$, contradiction. The axis of $\Gamma$ has equations $x_0=0, x_3=0$, so by \eqref{eq2:ellmueq} it is not a line of type $\ell_{\mu}$ and it intersects no $\ell_{\mu}$ line.

   \item[(iii)]
   We apply Lemma \ref{lem1:equation}.
If $q\equiv-1\pmod3$, then $3t^2+1\ne0$, $\forall t\in\F_q$. However,  by Lemma \ref{lem4:axis}, $\ell_{1/9}$ is an imaginary axis.

   If $q\equiv1\pmod3$, let $3t^2+1=0$. Then $t^2=-1/3\ne0$. Assume   $R_{\mu,\gamma}\in\pi_\t{osc}(t)$,  $t\in\F_q^*$, see Lemma \ref{lem2:EllMu}(iii). Then, by \eqref{eq2:osc_plane}, \eqref{eq2:R},  $t^3-3\gamma t^2+3\mu t-\gamma=0$ that implies $t^3+3\mu t=0$, $\mu=1/9$, contradiction. \qedhere
 \end{description}
\end{proof}

Let $G_q^{\ell_\mu}$ be the subgroup of $G_q$ fixing  $\ell_\mu$. For $q$ odd,  $G_q^{\ell_{\mu}}$ has order at least two.

\begin{lemma}\label{lemS5:P0notfixedodd}
Let $q \ge5$ be odd. Let a projectivity $\psi\in G_q^{\ell_{\mu}}$ fix $R_{\mu,\infty}$. Then $\psi$  has a matrix of the form
 \begin{equation}\label{eq:M_Rinfodd}
   \MM(\psi)=
   \left[
 \begin{array}{cccc}
 1&0&0&0\\
 0&d&0&0\\
 0&0&d^2&0\\
 0&0&0&d^3
 \end{array}
  \right],~d\in\{1,-1\}.
\end{equation}
Moreover, $(\psi)^2=\psi$ and $R_{\mu,\gamma}\psi=R_{\mu,d\gamma},~\gamma\in\F_q$.
\end{lemma}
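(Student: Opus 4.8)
The plan is to determine all projectivities $\psi\in G_q^{\ell_\mu}$ that fix the point $R_{\mu,\infty}=\Pf(1,0,1,0)$ by first reducing to the stabilizer of this point in $G_q$, then imposing that the line $\ell_\mu$ be preserved. First I would compute the general form of a matrix $\MM$ of \eqref{eq2_M} fixing $R_{\mu,\infty}$: imposing $[1,0,1,0]\MM=[\rho,0,\rho,0]$ for some $\rho\in\F_q^*$ gives the system of four equations $a^3+ac^2=\rho$, $3a^2b+bc^2+2acd=0$, $3ab^2+ad^2+2bcd=\rho$, $b^3+bd^2=0$. From the last equation $b(b^2+d^2)=0$; since $q$ is odd and $-1$ need not be a square one expects to push toward $b=0$, but in any case the second and third equations then have to be combined with $ad-bc\neq0$ to force $b=c=0$, after which the first equation gives $a^3=\rho=a$ up to proportionality (normalising $a=1$) and the matrix collapses to the diagonal form $\mathrm{diag}(1,d,d^2,d^3)$, which is exactly the shape of $\MM^\infty$ in \eqref{eq3:MM_Qinf}. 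This is unsurprising because $R_{\mu,\infty}$ and $Q_\infty$ play the same structural role; in fact one can likely invoke Lemma~\ref{lem3:stabilQinf} almost verbatim, noting that fixing $\Pf(1,0,1,0)$ rather than $\Pf(0,0,1,0)$ changes the bookkeeping but not the conclusion that only the diagonal matrices survive.

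Next I would impose that such a diagonal projectivity actually stabilises $\ell_\mu$ as a set. Using the coordinate vector $L_\mu=(\mu,0,1,-\mu,0,1)$ from \eqref{eq2:coordVectElMu}, or more directly the description \eqref{eq2:ellmu} that $\ell_\mu=\{\Pf(\gamma,\mu,\gamma,1)\mid\gamma\in\F_q^+\}$, I compute the image of a general point under $\mathrm{diag}(1,d,d^2,d^3)$: $[\gamma,\mu,\gamma,1]\mapsto[\gamma,d\mu,d^2\gamma,d^3]$, which after rescaling by $d^3$ is $\Pf(\gamma/d^3,\mu/d^2,\gamma/d,1)$. For this to lie on $\ell_\mu$, the defining equations \eqref{eq2:ellmueq} $x_0=x_2$, $x_1=\mu x_3$ must hold, giving $\gamma/d^3=\gamma/d$ for all $\gamma$ and $\mu/d^2=\mu$. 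The first (taking any $\gamma\neq0$) yields $d^2=1$, i.e. $d\in\{1,-1\}$, and then $\mu/d^2=\mu$ is automatic; one should also separately check $R_{\mu,\infty}$ and $R_{\mu,0}$ are handled, but $R_{\mu,\infty}$ is fixed by hypothesis and $R_{\mu,0}=\Pf(0,\mu,0,1)\mapsto\Pf(0,d\mu,0,d^3)=\Pf(0,\mu/d^2,0,1)=R_{\mu,0}$. Hence the only admissible values are $d=\pm1$, which establishes \eqref{eq:M_Rinfodd}.

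Finally the two ``Moreover'' claims follow immediately: for $d=1$ the matrix is the identity, and for $d=-1$ the matrix $\mathrm{diag}(1,-1,1,-1)$ squares to the identity, so in either case $\psi^2$ is the identity projectivity (the statement $(\psi)^2=\psi$ is presumably a typo for $(\psi)^2=\mathrm{id}$, which I would write as $\psi^2=\mathrm{id}$, or equivalently $\psi=\psi^{-1}$); and the action on the generic point computed above, $[\gamma,\mu,\gamma,1]\mapsto[\gamma,d\mu,d^2\gamma,d^3]=[\gamma/d^3,\mu/d^2,\gamma/d,1]$, simplifies using $d^2=1$ to $\Pf(d\gamma,\mu,d\gamma,1)=R_{\mu,d\gamma}$, giving $R_{\mu,\gamma}\psi=R_{\mu,d\gamma}$ for all $\gamma\in\F_q$.

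The main obstacle is the first step: proving that fixing $R_{\mu,\infty}$ really forces the matrix into diagonal form, since a priori the projectivity could move $R_{\mu,\infty}$ to a scalar multiple of itself via an off-diagonal matrix. The case analysis on $b(b^2+d^2)=0$ and the middle two equations needs care — one must genuinely use $q$ odd (so that the factor $2$ in the matrix entries is invertible and the argument mirrors Lemma~\ref{lem3:stabilQinf}) and the non-degeneracy condition $ad-bc\neq0$ to eliminate the spurious branches. Once the point stabiliser is pinned down to the one-parameter diagonal torus, the remaining verifications are short linear computations.
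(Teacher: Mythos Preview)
Your overall strategy is sound, but there is a genuine gap in the first step, compounded by a computational slip.

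\textbf{Computational slip.} Your four equations come from computing $\MM\cdot[1,0,1,0]^{tr}$ (columns) rather than $[1,0,1,0]\cdot\MM$ (rows). With the paper's convention (points as row vectors acting on the left), the correct system from $[1,0,1,0]\MM=\rho[1,0,1,0]$ is
\[
a^3+3ab^2=\rho,\quad a^2c+b^2c+2abd=0,\quad ac^2+ad^2+2bcd=\rho,\quad c^3+3cd^2=0,
\]
so the last equation is $c(c^2+3d^2)=0$, not $b(b^2+d^2)=0$.

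\textbf{Conceptual gap.} More seriously, the analogy with Lemma~\ref{lem3:stabilQinf} fails: the stabiliser of $R_{\mu,\infty}=\Pf(1,0,1,0)$ in $G_q$ is \emph{not} the diagonal torus when $q\equiv1\pmod3$. The branch $c\neq0$, $c^2=-3d^2$ of the (correct) last equation is live whenever $-3$ is a square. Normalising $d=1$, the first two equations then admit the non-diagonal solution $a=b/c$ with $b^2=c^2=-3$, and one checks directly that $ad-bc=4b/c\neq0$. So there exist genuine non-diagonal elements of $G_q$ fixing $\Pf(1,0,1,0)$; your step~1 cannot conclude ``the matrix collapses to diagonal form'' from the point-stabiliser condition and $ad-bc\neq0$ alone.

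What rescues the lemma is the \emph{line} condition: these extra elements send $R_{\mu,0}$ into $\ell_\mu$ only when $\mu=1$, which is excluded. This is exactly what the paper does (case~(b22) in its proof): it carries the constraint $R_{\mu,0}\psi\in\ell_\mu$ alongside the point-fixing condition from the start, and it is the relation coming from $R_{\mu,0}$ that forces $\mu=1$ and kills the non-diagonal branch. Your two-step plan can be repaired, but you must keep the non-diagonal case alive through step~1 and eliminate it in step~2 using $\mu\neq1$; the shortcut ``invoke Lemma~\ref{lem3:stabilQinf} almost verbatim'' does not work here.
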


\begin{proof}
The matrix $\MM(\psi)$ is a version of $\MM$ of \eqref{eq2_M}.
By hypothesis,  $R_{\mu,\infty} \psi = R_{\mu,\infty} $ and $R_{\mu,0} \psi = R_{\mu,\gamma}$, $\gamma\in\F_q$, i.e. $[1,0,1,0]\MM = [1,0,1,0]$ and $[0,\mu,0,1]\MM=[\gamma,\mu,\gamma,1]$, $\gamma\in\F_q$,
that implies
\begin{align}\label{lemS5:P0notfixedeq1odd}
&a^2c+b^2c+2abd=0,~c^3+3cd^2=0,~ a^3+3ab^2 = ac^2+ad^2+2bcd;\db\\
&\label{lemS5:P0notfixedeq2odd}
3\mu a^2b+b^3=\mu (bc^2+2acd)+bd^2,~
\mu (a^2d+2abc)+b^2d = 3\mu^2 c^2d+\mu d^3.
\end{align}
\begin{description}
  \item[(a)] Let $c = 0$. As  $ad-bc=ad\ne0$, $a,d \neq 0$. From the first relation of \eqref{lemS5:P0notfixedeq1odd} we obtain $b=0$. Then the last relation of \eqref{lemS5:P0notfixedeq1odd} becomes $a(a-d)(a+d)=0$. As $\MM$ is defined up to proportionality, we can fix $a=1$, then $d = \pm 1$. For $c=b=0$, $a=1$, $d = \pm 1$, \eqref{lemS5:P0notfixedeq2odd} is also satisfied and  \eqref{eq2_M} becomes \eqref{eq:M_Rinfodd}.

  \item[(b)]  Let $c \neq 0$. The second relation of \eqref{lemS5:P0notfixedeq1odd} becomes $c^2+3d^2=0$.
      \begin{description}
        \item[(b1)] Let $q\equiv 0 \pmod 3$. Then $c^2=0$, contradiction.
        \item[(b2)]  Let $q\not \equiv 0 \pmod 3$.
      Then $d \neq 0$, otherwise again $c^2=0$, contradiction. As $\MM$ is defined up to proportionality, we can fix $d=1$. Then $q \equiv 1 \pmod 3$, as $c = \pm \sqrt{-3}$.
      From the first relation of \eqref{lemS5:P0notfixedeq1odd} we obtain
 $a = -3b/c$ or $a = b/c$.
 \begin{description}
   \item[(b21)] Let $a = -3b/c$. Then $ad-bc =0$, contradiction.
   \item[(b22)] Let $a = b/c$.  The third relation of \eqref{lemS5:P0notfixedeq1odd}  becomes
  $8b(b^2+3)/c^3=0$ that implies $b=0$ or $b^2 = -3$, $q \equiv 1 \pmod 3$, $b = \pm \sqrt{-3}$.

  If $b=0$ also $a=0$ and $ad-bc=0$, contradiction.

  If $b^2 = -3$, the  first relation of \eqref{lemS5:P0notfixedeq2odd}  implies $\mu = 1$, contradiction.
 \end{description}
      \end{description}
\end{description}
The last assertion follows  by direct computation.
\end{proof}

\section{The $q-2$ distinct   orbits of $\ell_\mu$-lines, $\mu\in\F_q^{*} \setminus\{1\}$, for even $q$}\label{sec:orbitlinesLmuEven}
In this section we consider the orbits of  the lines $\ell_\mu$ of type \eqref{eq2:ellmu}, $\mu\in\F_q^{*} \setminus\{1\}$,  for even  $q \geq 8$. In this case the matrix of \eqref{eq2_M}  corresponding to a projectivity of $G_q$  reduces to
  \begin{align}
 & \label{eq2:Meven} \mathbf{M}=\left[
 \begin{array}{cccc}
 a^3&a^2c&ac^2&c^3\\
 a^2b&a^2d&bc^2&c^2d\\
ab^2&b^2c&ad^2&cd^2\\
 b^3&b^2d&bd^2&d^3
 \end{array}
  \right],~a,b,c,d\in\F_q,~ ad-bc\ne0.
\end{align}

\begin{lemma}\label{lemS5:P0notfixed}
Let $q \ge 8$, $q$ even. Let $\psi\in G_q^{\ell_{\mu}}$ fix $R_{\mu,\infty}$. Then  $\psi = I$, the identity element of $G_q^{\ell_{\mu}}$.
\end{lemma}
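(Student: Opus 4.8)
The plan is to mimic the structure of Lemma \ref{lemS5:P0notfixedodd}, but now using the reduced matrix form \eqref{eq2:Meven} valid for even $q$, and to show that the only projectivity in $G_q^{\ell_\mu}$ fixing $R_{\mu,\infty}$ is the identity. First I would write down the defining conditions: a projectivity $\psi$ with matrix $\MM = \MM(\psi)$ of the form \eqref{eq2:Meven} fixes $R_{\mu,\infty}=\Pf(1,0,1,0)$ and maps $R_{\mu,0}=\Pf(0,\mu,0,1)$ to some point $R_{\mu,\gamma}=\Pf(\gamma,\mu,\gamma,1)$ of $\ell_\mu$ (it must stay on $\ell_\mu$, and the only points of $\ell_\mu$ are the $R_{\mu,\gamma}$). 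Translating $[1,0,1,0]\MM = [1,0,1,0]$ and $[0,\mu,0,1]\MM = [\gamma,\mu,\gamma,1]$ into coordinates gives a system of polynomial equations in $a,b,c,d$ (and $\gamma$); in characteristic $2$ several cross-terms that were present in the odd case vanish, so the system should be somewhat simpler. From $[1,0,1,0]\MM$ I expect, comparing the first and third coordinates and using the second/fourth, relations like $a^2c+ac^2 = a^3+ad^2$ (after eliminating $\gamma$-type scalars), $b^2c = b^2d$ type conditions, etc.

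Next I would do the case analysis on whether $c=0$ or $c\ne0$, exactly parallel to the odd-$q$ proof. In the case $c=0$: since $ad-bc = ad \ne 0$ we get $a,d\ne0$; the condition coming from the second coordinate of $[1,0,1,0]\MM=[1,0,1,0]$ should force $b=0$ (in even characteristic $b^2d=0$ forces $b=0$); then the remaining relation from the first/third coordinates should reduce to something like $a(a-d)^2 = 0$ or $(a+d)$-type factor — but in characteristic $2$, $a+d = a-d$, so I expect to land on $a=d$, and after normalizing $a=1$ we get $d=1$, $b=c=0$, i.e. $\psi = I$. In the case $c\ne0$: I would use the analogue of the ``$c^2+3d^2=0$'' step, which in characteristic $2$ becomes $c^2+d^2=0$, i.e. $c=d$ (Frobenius injectivity), and then push this through the first and third relations of the $[1,0,1,0]$-system together with the two relations coming from $[0,\mu,0,1]\MM=[\gamma,\mu,\gamma,1]$; I expect one branch to give $ad-bc=0$ (a contradiction) and the other branch to force $\mu=1$ (contradiction, since $\mu\in\F_q^*\setminus\{1\}$), exactly as in Lemma \ref{lemS5:P0notfixedodd}.

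The main obstacle I anticipate is the bookkeeping in the $c\ne0$ case: after substituting $c=d$ one still has $a,b$ free subject to $ad-bc\ne0$, and the equations from the image of $R_{\mu,0}$ bring in $\mu$ and $\gamma$, so I need to eliminate $\gamma$ cleanly (it is determined by, say, the fourth coordinate) and then show the surviving polynomial relations in $a,b,\mu$ have no solution with $ad-bc\ne0$ and $\mu\ne1$. This is a finite elimination that in principle could be handed to Magma as in the rest of the paper, but I would try to present it by hand: factor the key relation (I expect a factor forcing $ad-bc=0$ and a complementary factor forcing $b^2 = $ something that, fed back, forces $\mu=1$). Everything else — the $c=0$ case and the translation of the fixing conditions into equations — should be routine, just with fewer terms than the odd case because the characteristic-$2$ matrix \eqref{eq2:Meven} has already collapsed the mixed monomials. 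Finally I would note that once $\psi$ has matrix the identity up to scalar, $\psi = I$ in $G_q^{\ell_\mu}$, completing the proof.
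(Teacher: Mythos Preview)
Your overall strategy---translate the fixing conditions into a polynomial system using the even-$q$ matrix \eqref{eq2:Meven} and then split on $c=0$ versus $c\ne 0$---is exactly the paper's. But your execution of the $c=0$ branch contains a concrete error. With $c=0$, the second coordinate of $[1,0,1,0]\MM$ is $(a^2+b^2)c$, which vanishes identically; there is no ``$b^2d=0$'' condition, and you cannot conclude $b=0$ from the $R_{\mu,\infty}$-fixing equations alone. What those equations actually give (with $c=0$, $a\ne0$) is $a^2+b^2=d^2$. You must then split into $b=0$, which with $a^2=d^2$ yields $a=d$ and the identity, and $b\ne 0$. In the latter subcase the $R_{\mu,0}$-mapping condition supplies $\mu a^2+b^2=d^2$; comparing with $a^2+b^2=d^2$ gives $(\mu+1)a^2=0$, hence $\mu=1$, contradiction. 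So the $\mu=1$ contradiction lives in the $c=0$ branch, not in the $c\ne0$ branch as you anticipated.

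Conversely, the $c\ne 0$ case is simpler than you expect and does not need the $R_{\mu,0}$-conditions at all: from $(a^2+b^2)c=0$ and $c(c^2+d^2)=0$ you get (using Frobenius injectivity) $a=b$ and $c=d$, or else $a=b=0$; either way $ad-bc=0$, a contradiction. Once you swap the roles of the two branches as above, your sketch becomes the paper's proof.
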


\begin{proof}
A matrix corresponding to $\psi$ is a version of $\MM$ of \eqref{eq2:Meven}.
By hypothesis,  $R_{\mu,\infty} \psi = R_{\mu,\infty} $ and $R_{\mu,0} \psi = R_{\mu,\gamma}$, $\gamma\in\F_q$, i.e. $[1,0,1,0]\MM = [1,0,1,0]$ and $[0,\mu,0,1]\MM=[\gamma,\mu,\gamma,1]$,
that implies
\begin{align}\label{lemS5:P0notfixedeq1}
 &a^2c+b^2c=0,~c^3+cd^2=0,~ a^3+ab^2 = ac^2+ad^2;\db\\
 &\mu a^2b+b^3=\mu bc^2+bd^2,~ \mu a^2d+b^2d = \mu,~ \mu c^2d+d^3\ne0.\label{lemS5:P0notfixedeq2}
 \end{align}
\begin{description}
  \item[(a)] Let $c \neq 0$.
  \begin{description}
    \item[(a1)] Let $a = 0$. By \eqref{lemS5:P0notfixedeq1}, $b^2=0$ that implies  $ad-bc= 0$, contradiction.
    \item[(a2)]  Let $a \neq 0$.  By \eqref{lemS5:P0notfixedeq1},
 $(a+b)^2=0,~ (c+d)^2=0$, that implies,  $a=b$, $c=d$, and $ad-bc=0$, contradiction.
  \end{description}

  \item[(b)]  Let $c = 0$. As  $ad-bc=ad\ne0$, $a,d \neq 0$ and \eqref{lemS5:P0notfixedeq1} and \eqref{lemS5:P0notfixedeq2} become, respectively,
 \begin{align}\label{lemS5:P0notfixedeq3}
&a^2+b^2 = d^2;\db\\
&\label{lemS5:P0notfixedeq4}
\mu ba^2+b^3=bd^2, \mu a^2d+b^2d= \mu.
\end{align}
\begin{description}
  \item[(b1)] Let $b=0$. By \eqref{lemS5:P0notfixedeq3}, $a^2+ d^2=(a+d)^2=0$, i.e.  $a=d$. Then from \eqref{lemS5:P0notfixedeq4} $\mu d^3= \mu$, so $d^3= 1$ i.e. d is a cubic root of 1 and by \eqref{eq2:Meven} $\MM$ is the identity matrix. Therefore $\psi = I$, the identity element of $G_q^{\ell_{\mu}}$.
  \item[(b2)] Let $b\neq0$. From \eqref{lemS5:P0notfixedeq4} we obtain $\mu a^2 +b^2=d^2$, and from \eqref{lemS5:P0notfixedeq3} $\mu a^2+a^2+ d^2=d^2$, so $\mu a^2+a^2=0$  that implies  $\mu = 1$, contradiction. \qedhere
\end{description}
\end{description}
\end{proof}

\begin{theorem}\label{S5:stab}
Let $q$ be even. Then the size of the subgroup $G_q^{\ell_{\mu}}$  of $G_q$ fixing the $\EnG$-line $\ell_{\mu}$ is $\#G_q^{\ell_{\mu}}=2$. The size of the orbit $\Os_\mu$ of $\ell_{\mu}$ under $G_q$ is equal to $\#\Os_\mu=(q^3-q)/2.$
The non-trivial element $\psi$ of $G_q^{\ell_{\mu}}$ has a matrix of the form:
 \begin{equation}\label{S5:stab_matqeven}
   \MM^{\ell_{\mu}}=\left[
 \begin{array}{cccc}
 0&0&0&1\\
 0&0&\sqrt{\mu}&0\\
 0&\mu&0&0\\
 \sqrt{\mu^3}&0&0&0
 \end{array}
  \right].
\end{equation}
Moreover  $R_{\mu,\infty} \psi = R_{\mu,0}$ and vice versa $ R_{\mu,0}\psi =R_{\mu,\infty}$.
\end{theorem}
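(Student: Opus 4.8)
The plan is to combine the orbit--stabilizer relation with Lemma~\ref{lemS5:P0notfixed}. Since $G_q\cong\mathrm{PGL}(2,q)$ has order $q^3-q$, once we show $\#G_q^{\ell_{\mu}}=2$ the orbit length $\#\Os_\mu=(q^3-q)/2$ is immediate, so the whole task reduces to pinning down the stabilizer and its non-trivial element.

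\emph{Exhibiting the involution.} First I would check that the matrix $\MM^{\ell_{\mu}}$ in \eqref{S5:stab_matqeven} is a genuine element of $G_q$: it is the specialization of \eqref{eq2:Meven} with $a=0$, $b=\sqrt{\mu}$, $c=1$, $d=0$ (the square root exists and is unique as $q$ is even), and $ad-bc=\sqrt{\mu}\neq0$ since $\mu\neq0$, so by Theorem~\ref{th2_Hirs}(i) it corresponds to a projectivity $\psi\in G_q$. A direct computation gives $[1,0,1,0]\,\MM^{\ell_{\mu}}=(0,\mu,0,1)$ and $[0,\mu,0,1]\,\MM^{\ell_{\mu}}=\sqrt{\mu^3}\,(1,0,1,0)$, i.e. $R_{\mu,\infty}\psi=R_{\mu,0}$ and $R_{\mu,0}\psi=R_{\mu,\infty}$. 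As $\ell_{\mu}$ is spanned by $R_{\mu,\infty}$ and $R_{\mu,0}$, the projectivity $\psi$ fixes $\ell_{\mu}$ setwise, and $\psi\neq I$ because it moves $R_{\mu,\infty}$; hence $\#G_q^{\ell_{\mu}}\geq2$.

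\emph{Uniqueness of the non-trivial element.} Next I would take an arbitrary $\psi\in G_q^{\ell_{\mu}}$ with matrix $\MM$ of the form \eqref{eq2:Meven}. If $\psi$ fixes $R_{\mu,\infty}$, then $\psi=I$ by Lemma~\ref{lemS5:P0notfixed}. Otherwise $R_{\mu,\infty}\psi$ is another point of $\ell_{\mu}$, hence $R_{\mu,\gamma_0}$ for some $\gamma_0\in\F_q$, so, using $x^2+y^2=(x+y)^2$ in characteristic $2$, the vector
\begin{equation*}
[1,0,1,0]\,\MM=\bigl(a(a+b)^2,\, c(a+b)^2,\, a(c+d)^2,\, c(c+d)^2\bigr)
\end{equation*}
must be proportional to $(\gamma_0,\mu,\gamma_0,1)$. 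Comparing the second entry with the fourth forces $c\neq0$, $a+b\neq0$, $c+d\neq0$; comparing the first entry with the third forces $a=0$ or $a+b=c+d$. The branch $a+b=c+d$ makes the vector proportional to $(a/c,1,a/c,1)$, which is incompatible with $(\gamma_0,\mu,\gamma_0,1)$ because $\mu\neq1$. In the branch $a=0$ one obtains $\gamma_0=0$, and $b,c\neq0$ from $ad-bc\neq0$; I would then inspect $R_{\mu,0}\psi$. If $d\neq0$, the first and third entries of $[0,\mu,0,1]\,\MM$ give $b^2=\mu c^2+d^2$, which together with the relation $b^2=\mu(c+d)^2$ obtained above yields $\mu=1$, a contradiction; hence $d=0$, and the surviving equations force $b=\sqrt{\mu}\,c$, so $\MM$ is a scalar multiple of \eqref{S5:stab_matqeven} and $R_{\mu,0}\psi=R_{\mu,\infty}$. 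Thus the non-trivial element of $G_q^{\ell_{\mu}}$ is unique, equal to the $\psi$ of the first part, $\#G_q^{\ell_{\mu}}=2$, and $\#\Os_\mu=(q^3-q)/2$ follows from orbit--stabilizer; moreover $\psi^2$ fixes $R_{\mu,\infty}$, so $\psi^2=I$ by Lemma~\ref{lemS5:P0notfixed}.

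\emph{Main obstacle.} The only delicate point is the bookkeeping in the uniqueness step: one must track carefully which of $a,b,c,d$ are allowed to vanish and the two possible shapes of a point of $\ell_{\mu}$ (the finite points $R_{\mu,\gamma}$, all with nonzero second coordinate $\mu$, versus $R_{\mu,\infty}$). Each branch nevertheless collapses almost at once once $\mu\notin\{0,1\}$ and $ad-bc\neq0$ are fed in, so beyond this I expect no genuine difficulty.
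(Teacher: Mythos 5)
Your proposal is correct and follows essentially the same route as the paper: reduce via Lemma~\ref{lemS5:P0notfixed} to the case $R_{\mu,\infty}\psi=R_{\mu,\gamma}$, solve the resulting equations from \eqref{eq2:Meven} to force $a=d=0$, $b=\sqrt{\mu}\,c$, and conclude with orbit--stabilizer. The only (harmless) differences are organizational: you verify the candidate involution explicitly up front and extract $d=0$ from the image of $R_{\mu,0}$, whereas the paper gets it directly from the normalization $c=1$ in $c^{3}+cd^{2}=1$.
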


\begin{proof}
Let  $\psi\in G_q^{\ell_{\mu}}$, $\psi \neq I$,  the identity element of $G_q^{\ell_{\mu}}$ and let  $\MM^{\ell_{\mu}}$ be a matrix corresponding to $\psi$. 
We find it as is a version of $\MM$ of \eqref{eq2:Meven}.
By Lemma \ref{lemS5:P0notfixed},   $R_{\mu,\infty} \psi = R_{\mu,\gamma}$, $\gamma \in \F_q$,  i.e. $[1,0,1,0]\MM=[\gamma,\mu,\gamma,1]$
that implies
\begin{equation}\label{ThS5:stab_eq2}
a^3+ab^2=ac^2+ad^2, \;\;  a^2c+b^2c = \mu, \;\;  c^3+cd^2=1.
 \end{equation}
The last relation of \eqref{ThS5:stab_eq2} gives $c \neq 0$. As $\MM$ is defined up to proportionality, we can fix $c=1$.
\begin{description}
  \item[(a)]Let $a=0$. By \eqref{ThS5:stab_eq2},  $b^2 =\mu$,  $d = 0$. As $q$ is even, every element of $\F_q$ is a square and has exactly one square root, so $\MM$ has the form \eqref{S5:stab_matqeven} and  $R_{\mu,\infty} \psi = R_{\mu,0}$. By direct computation, $\MM$ has order 2. This implies $R_{\mu,0} \psi = R_{\mu,\infty} (\psi)^2  =R_{\mu,0}$, so $\ell_{\mu} \psi = \ell_{\mu}$. For the size of the orbit we apply \cite[Lemma 2.44(ii)]{Hirs_PGFF}.
  \item[(b)] Let $a \neq 0$. Then  \eqref{ThS5:stab_eq2} becomes
$a^2+b^2=1+d^2=1,~ a^2+b^2 = \mu $ that implies $\mu=1$, contradiction. \qedhere
\end{description}
\end{proof}

\begin{theorem}\label{S5:th_numberorbits}
Let $q$ be even. Then any two lines $\ell_{\mu'}$, $\ell_{\mu''}$ of type \eqref{eq2:ellmu}, $\mu', \mu'' \in \F_q^{*} \setminus\{1\}, \mu' \neq \mu''$, belong to different orbits of $G_q.$ No  $\ell_{\mu}$-line of type \eqref{eq2:ellmu} belongs to the orbit $\Os_\Lc$ of the line $\Lc$.
\end{theorem}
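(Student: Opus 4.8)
The plan is to prove the two statements separately, both by exploiting the coordinate-vector description of lines in \eqref{eq2:coordVectElMu} together with the explicit matrix form \eqref{eq2:Meven} of $G_q$-projectivities for even $q$.

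First I would handle the claim that distinct $\mu$ give distinct orbits. Suppose $\ell_{\mu'}\psi=\ell_{\mu''}$ for some $\psi\in G_q$ with matrix $\MM$ of the form \eqref{eq2:Meven}. Since $R_{\mu',\infty}=\Pf(1,0,1,0)$ and $R_{\mu',0}=\Pf(0,\mu',0,1)$ generate $\ell_{\mu'}$, their images must be two points of $\ell_{\mu''}$, i.e. of the form $\Pf(\gamma,\mu'',\gamma,1)$ or $\Pf(1,0,1,0)$. By Theorem \ref{S5:stab}, $G_q^{\ell_{\mu''}}$ has order $2$, so up to composing with the known involution \eqref{S5:stab_matqeven} I may assume $R_{\mu',\infty}\psi=R_{\mu'',\infty}$; then the argument of Lemma \ref{lemS5:P0notfixed} (with $\mu$ replaced by the pair $\mu',\mu''$) applies almost verbatim. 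Tracking the algebra: $[1,0,1,0]\MM=[1,0,1,0]$ forces $c=0$ (the even-$q$ case analysis there gives $a=b$, $c=d$ leading to $ad-bc=0$ unless $c=0$), hence $a,d\neq0$ and $a^2+b^2=d^2$; then $[0,\mu',0,1]\MM=[\gamma,\mu'',\gamma,1]$ with $c=0$ forces $a^2c+b^2c=0=\gamma$ and the two relations $\mu'ba^2+b^3=bd^2$, $\mu'a^2d+b^2d=\mu''$. If $b=0$ then $a=d$ (a cubic root of unity) and $\mu'=\mu''$; if $b\neq0$ then from the first, $\mu'a^2+b^2=d^2$, and combined with $a^2+b^2=d^2$ this gives $\mu'a^2=a^2$, so $\mu'=1$, contradiction. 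Hence $\mu'=\mu''$.

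Second I would show no $\ell_\mu$ lies in $\Os_\Lc$. The cleanest route is via the stabilizer orders: by Theorem \ref{S5:stab}, $\#G_q^{\ell_\mu}=2$ for every even $q$, while by Theorem \ref{th3:main}(iii)--(i), $\#G_q^{\Lc}$ equals $1$ when $\xi=-1$ and $3$ when $\xi=1$ (for even $q$, $-1/2=1/2$ is always a cube since cubing is a bijection when $q\equiv-1\pmod 3$, and for $q\equiv1\pmod3$ one checks $2$ is a cube in characteristic $2$ iff... — actually simpler: by Theorem \ref{th3:main} the even-$q$ cases are only $\xi=1$ giving order $3$ and $\xi=-1$ giving order $1$). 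Since conjugate subgroups have equal order and $2\notin\{1,3\}$, the line $\Lc$ and any $\ell_\mu$ cannot lie in the same $G_q$-orbit. For $q\equiv0\pmod3$ there is nothing to prove, as then $\Lc$ is not even an $\EnG$-line (noted after Lemma \ref{lem3:l0infInEnG}), and anyway $\Os_\Lc$ was only defined for $q\not\equiv0\pmod3$; I would add a one-line remark to that effect.

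The main obstacle I anticipate is purely bookkeeping in the first part: making the ``up to the involution'' reduction rigorous, i.e. justifying that if $\psi$ maps $R_{\mu',\infty}$ to the point $R_{\mu'',0}$ rather than to $R_{\mu'',\infty}$, then composing $\psi$ with the involution of \eqref{S5:stab_matqeven} for $\mu''$ returns to the fixed-point case, so that Lemma \ref{lemS5:P0notfixed}'s computation can be reused. This is routine but needs to be stated carefully; everything else is a short stabilizer-order comparison. Alternatively, one can avoid the reduction entirely and just run the full case split on the image of $R_{\mu',\infty}$ (either $R_{\mu'',\infty}$ or some $R_{\mu'',\gamma}$ with $\gamma\in\F_q$), which duplicates the computations of Lemma \ref{lemS5:P0notfixed} and Theorem \ref{S5:stab} but is completely elementary.
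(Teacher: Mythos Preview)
Your proposal is correct, and for the second claim (no $\ell_\mu$ lies in $\Os_\Lc$) it coincides with the paper's argument: both simply compare stabilizer orders via Theorems~\ref{th3:main} and~\ref{S5:stab} and observe $2\notin\{1,3\}$.

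For the first claim, your ``alternative'' route is precisely what the paper does. The paper argues that any $\psi$ with $\ell_{\mu'}\psi=\ell_{\mu''}$ must send $R_{\mu',\infty}$ to $R_{\mu'',0}$ (reusing the computations of Lemma~\ref{lemS5:P0notfixed} and Theorem~\ref{S5:stab}), so $\psi$ has the antidiagonal form \eqref{S5:stab_matqeven} with parameter $\mu''$; it then computes $R_{\mu',0}\psi=[\sqrt{\mu''^3},0,\mu'\sqrt{\mu''},0]$, which forces $\mu'=\sqrt{\mu''^2}=\mu''$. Your primary plan---reducing via the involution to the case $R_{\mu',\infty}\psi=R_{\mu'',\infty}$---needs one step more than you acknowledge: the involution of \eqref{S5:stab_matqeven} only swaps $R_{\mu'',\infty}\leftrightarrow R_{\mu'',0}$ and permutes the remaining $R_{\mu'',\gamma}$ among themselves, so composing with it does not by itself rule out $R_{\mu',\infty}\psi=R_{\mu'',\gamma}$ with $\gamma\in\F_q^*$. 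That possibility is eliminated exactly by case~(b) of the proof of Theorem~\ref{S5:stab} (which yields $\mu''=1$); once this is in hand, your diagonal-case computation and the paper's antidiagonal one are interchangeable.
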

\begin{proof}
Let $\ell_{\mu'}\psi\ = \ell_{\mu''}$, $\psi\in G_q$. Then by Lemma \ref{lemS5:P0notfixed}, $R_{\mu',\infty} \psi \neq R_{\mu',\infty}=R_{\mu'',\infty}$. Reasoning as in the proof of Theorem \ref{S5:stab}, we have that  $R_{\mu',\infty} \psi = R_{\mu'',0}$ and a matrix  corresponding to $\psi$ is
 \begin{equation*}
   \MM(\psi)=\left[
 \begin{array}{cccc}
 0&0&0&1\\
 0&0&\sqrt{\mu''}&0\\
 0&\mu''&0&0\\
 \sqrt{\mu''^3}&0&0&0
 \end{array}
  \right].
\end{equation*}

Then $[0, \mu', 0, 1]\MM(\psi)=[\sqrt{\mu''^3},0,\mu'\sqrt{\mu''},0]$ that is equal to $R_{\mu'',\infty}$. This implies $\sqrt{\mu''^3} = \mu'\sqrt{\mu''}$, so $\mu' = \sqrt{\mu''^2}$. As $q$ is even, by \cite[Lemma 2.44(viii)]{Hirs_PGFF}. $\mu' = \mu''$, contradiction. Finally, by Theorems \ref{th3:main} and \ref{S5:stab}, the lines $\Lc$ and  $\ell_{\mu}$ have different stabilizer groups.
\end{proof}

\begin{remark}
  By Theorems \ref{th3:main}(i)(iii), \ref{S5:stab}, and \ref{S5:th_numberorbits}, for even $q$, we obtained sizes and structures of $q-1$ orbits,
  which contain in total $(q-2)(q^3-q)/2+\#\Os_{\Lc}$ $\EnG$-lines where $\#\Os_{\Lc}=(q^3-q)/3$ if $q\equiv1\pmod3$ and $\#\Os_{\Lc}=q^3-q$ if $q\equiv-1\pmod3$. This is  approximately   half of all $\EnG$-lines, see $\#\OO_6=\#\OO_{\EnG}=(q^2-q)(q^2-1)$ in Theorem~\ref{th2_Hirs}(ii).
\end{remark}

\section{On the orbits of lines $\ell_\mu$, $\mu\in\F_q^*\setminus\{1\}$, $q\equiv0\pmod3$} \label{sec:orbitlinesLmuChar3}
For $q\equiv0\pmod3$, a matrix $\MM$ corresponding to a projectivity of $G_q$ has the general form
\begin{align}\label{eq:M_q=0}
\mathbf{M}=\left[
 \begin{array}{cccc}
 a^3&a^2c&ac^2&c^3\\
 0&a^2d-abc&bc^2-acd&0\\
 0&b^2c-abd&ad^2-bcd&0\\
 b^3&b^2d&bd^2&d^3
 \end{array}
  \right],~a,b,c,d\in\F_q,~ ad-bc\ne0.
\end{align}

By \eqref{eq:M_q=0}, \eqref{eq2:R},
  we have
\begin{align}\label{eq:infM}
&R_{\mu,\infty}\MM=[1,0,1,0]\MM=[a^3,a^2c+b^2c-abd,ac^2+ad^2-bcd,c^3];\db\\\
\label{eq:0xM}
&R_{\mu,0}\MM=[0,\mu,0,1]\MM=[b^3,\mu(a^2d-abc)+b^2d,\mu(bc^2-acd)+bd^2,d^3].
\end{align}

\begin{lemma}\label{lem:infTOinf_q=0}
Let $q\equiv0\pmod3$.   Let  $\psi\in G_q$  fix $ R_{\mu,\infty}$. 
Then a matrix $\MM(\psi)$ corresponding to $\psi$ has the general form
 \begin{equation}\label{eq:M_Rinf_q=0}
   \MM(\psi)=
   \left[
 \begin{array}{cccc}
 1&0&0&0\\
 0&d&0&0\\
 0&0&d^2&0\\
 0&0&0&d^3
 \end{array}
  \right],~d\in\{1,-1\}.
\end{equation}
Moreover, for any point $P\in\PG(3,q)$, we have $P\psi=P$ if $d=1$ and $P(\psi)^2=P$ if $d=-1$
. Also,  $R_{\mu,\gamma}\psi=R_{\mu,d\gamma},~\gamma\in\F_q$.
\end{lemma}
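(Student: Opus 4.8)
The plan is to determine the general form of a matrix $\MM(\psi)$ of \eqref{eq:M_q=0} subject to the condition $R_{\mu,\infty}\psi = R_{\mu,\infty}$, i.e. $[1,0,1,0]\MM(\psi) = \rho[1,0,1,0]$ for some $\rho\in\F_q^*$, and then to impose $\ell_\mu\psi = \ell_\mu$ (equivalently $R_{\mu,0}\psi \in \ell_\mu$) to pin down $d$. First I would use the explicit expression \eqref{eq:infM}: the image of $R_{\mu,\infty}$ is $[a^3,\,a^2c+b^2c-abd,\,ac^2+ad^2-bcd,\,c^3]$. Fixing $R_{\mu,\infty}$ forces the second and fourth coordinates to vanish, $c^3 = 0$ and $a^2c+b^2c-abd = 0$, and also forces $a^3 = ac^2+ad^2-bcd$ (from the proportionality of the first and third coordinates, after noting $a^3\neq 0$ since $\MM(\psi)$ is nonsingular — if $a=0$ then $c^3=0$ gives $c=0$ and the first column vanishes, contradiction). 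From $c^3=0$ we get $c=0$; then $a^2c+b^2c-abd=0$ becomes $abd=0$, and since $a\neq 0$, $d\neq 0$ (as $ad-bc=ad\neq 0$), we obtain $b=0$. The remaining relation $a^3 = ad^2$ gives $a^2 = d^2$, and since $\MM(\psi)$ is defined up to a scalar we may normalize $a=1$, whence $d = \pm 1$, and with $b=c=0$ the matrix \eqref{eq:M_q=0} collapses to the diagonal form \eqref{eq:M_Rinf_q=0}.

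Next I would verify the auxiliary claims. For the action on an arbitrary point $P=\Pf(x_0,x_1,x_2,x_3)$: the diagonal matrix sends it to $\Pf(x_0,dx_1,d^2x_2,d^3x_3)$. If $d=1$ this is $P$ itself; if $d=-1$ it is $\Pf(x_0,-x_1,x_2,-x_3)$, and applying it twice yields $\Pf(x_0,x_1,x_2,x_3)=P$, so $P(\psi)^2=P$. Applying this to $R_{\mu,\gamma}=\Pf(\gamma,\mu,\gamma,1)$ directly gives the image $\Pf(d\gamma,d\mu,d\gamma,d^3)$; dividing the homogeneous coordinates by $d$ (legitimate since $d\in\{1,-1\}$) and using $d^2=1$ so $d^3 = d$, this is $\Pf(d\gamma,\mu,d\gamma,1) = R_{\mu,d\gamma}$, as asserted. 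In particular both $R_{\mu,0}$ and $R_{\mu,\infty}$ are fixed in both cases, so no further restriction on $d$ arises from requiring $\ell_\mu\psi=\ell_\mu$ — unlike the odd case, where Lemma \ref{lemS5:P0notfixedodd} already had this diagonal shape and both signs survive here as well.

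The main obstacle, such as it is, is purely bookkeeping: one must be careful that in characteristic $3$ the middle $2\times 2$ block of \eqref{eq:M_q=0} has the modified entries $a^2d-abc$, etc., and that the vanishing conditions really do come only from the first and last coordinates of \eqref{eq:infM} (the middle two coordinates are what the fixed point's zero entries force). There is no genuine case analysis branching as in the even-$q$ proof, because $c^3=0$ has the unique solution $c=0$ over any field, which immediately kills the off-diagonal structure; the only subtlety is the preliminary observation that $a\neq 0$, which I would dispatch first exactly as above by noting the first column of \eqref{eq:M_q=0} is $(a^3,0,0,b^3)^{\mathrm{tr}}$ and cannot vanish. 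I would then conclude by reading off \eqref{eq:M_Rinf_q=0} and stating the order-two / fixed-point consequences, mirroring the concluding sentence of Lemma \ref{lemS5:P0notfixedodd}.
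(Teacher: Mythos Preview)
Your approach is essentially identical to the paper's: use \eqref{eq:infM} to read off $c=0$, then $b=0$, then $a^2=d^2$, normalize $a=1$. Two small slips to fix: the justification for $a\neq 0$ should be either that the first \emph{row} of \eqref{eq:M_q=0} vanishes when $a=c=0$, or simply that $c=0$ forces $ad=ad-bc\neq 0$ (your ``first column'' still contains $b^3$); and the image of $R_{\mu,\gamma}$ under $\mathrm{diag}(1,d,d^2,d^3)$ is $[\gamma,\,d\mu,\,d^2\gamma,\,d^3]$, not $[d\gamma,d\mu,d\gamma,d^3]$ --- with $d^2=1$ this is $[\gamma,d\mu,\gamma,d]$, and dividing by $d$ gives $R_{\mu,d\gamma}$ as claimed. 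The remark about imposing $\ell_\mu\psi=\ell_\mu$ is unnecessary since the lemma only assumes $\psi\in G_q$ fixes $R_{\mu,\infty}$, but as you note it adds no constraint anyway.
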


\begin{proof}
Let $\MM$  be given by \eqref{eq:M_q=0}. By hypothesis, $[1,0,1,0]\MM=[1,0,1,0]$. Comparing components of the  vectors $[1,0,1,0]\MM$ and $[1,0,1,0]$ we consequently obtain
 $c=0$, $a\ne0$, $abd=0$, $ad^2\ne0$, $d\ne0$, $b=0$, $a^3=ad^2$,
 $d^2=a^2$. As $\MM$ is defined by the factor of proportionality, we can put $a=1$. Then $d=\sqrt{1}\in\{1,-1\}$, and from $\MM$ we obtain $\MM(\psi)$.

The remaining assertions are obvious.
\end{proof}

\begin{lemma}\label{lem:infTOgam_q=0}
Let $q\equiv0\pmod3$. Let $\varphi \in G_q^{\ell_\mu}$ be such that $ R_{\mu,\infty}\varphi=R_{\mu,\gamma}$, $\gamma\in\F_q$. If $\mu$ is not a square in $\F_q$ then $\varphi$ does not exist. If $\mu$ is a square in $\F_q$ then a matrix $\MM(\varphi)$ corresponding to $\varphi$ has the general form
 \begin{equation}\label{eq:M_Rinf gam_q=0}
   \MM(\varphi)=
   \left[
 \begin{array}{cccc}
 0&0&0&1\\
 0&0&b&0\\
 0&b^2&0&0\\
 b^3&0&0&0
 \end{array}
  \right],~b\in\{\sqrt{\mu},-\sqrt{\mu}\}.
\end{equation}
Moreover, for any point $P\in\PG(3,q)$, we have $P(\varphi)^2=P$. Also,
\begin{equation*}
R_{\mu,\infty}\MM(\varphi)=R_{\mu,0},~R_{\mu,0}\MM(\varphi)=R_{\mu,\infty},~R_{\mu,\gamma}\MM(\varphi)=R_{\mu,\gamma'},~\gamma,\gamma'\in\F_q^*,~\gamma'=
\frac{\pm\sqrt{\mu^3}}{\gamma}.
\end{equation*}
\end{lemma}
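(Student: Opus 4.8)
The plan is to mimic closely the structure of the proof of Lemma~\ref{lem:infTOinf_q=0}, but now tracking the consequences of $R_{\mu,\infty}\varphi=R_{\mu,\gamma}$ for a general $\gamma\in\F_q$ rather than $\gamma=\infty$. First I would take a matrix $\MM=\MM(\varphi)$ in the form \eqref{eq:M_q=0} and use \eqref{eq:infM}, writing the condition $[1,0,1,0]\MM=[a^3,\,a^2c+b^2c-abd,\,ac^2+ad^2-bcd,\,c^3]=[\gamma,\mu,\gamma,1]$ (up to a common nonzero scalar). The last coordinate being nonzero forces $c\ne0$, so, as $\MM$ is defined up to proportionality, I would normalize $c=1$; then the first and third coordinates give $a^3=\gamma=a d^2-bd+a$ (after dividing the third relation by $c=1$), i.e. $a^3-a = d(ad-b)$, and the second coordinate gives $a^2+b^2-abd=\mu$.

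Next I would exploit the companion relations coming from $R_{\mu,0}\varphi\in\ell_\mu$. Since $\varphi$ fixes $\ell_\mu$ and sends $R_{\mu,\infty}$ to $R_{\mu,\gamma}$ with $\gamma\in\F_q$, the point $R_{\mu,0}$ must go to some $R_{\mu,\gamma'}$ with $\gamma'\in\F_q^+$; using \eqref{eq:0xM}, $[0,\mu,0,1]\MM=[b^3,\ \mu(a^2d-abc)+b^2d,\ \mu(bc^2-acd)+bd^2,\ d^3]$. Comparing first and third coordinates (again with $c=1$) yields $b^3/d^3=\mu(b-ad)+bd^2)/d^3$ proportional to the ratio defining $\gamma'$, and the requirement that this be a valid point of $\ell_\mu$ (first $=$ third coordinate) gives $b^3 = \mu(b-ad)+bd^2$, i.e. $b^3-bd^2 = \mu(b-ad)$. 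Together with the relations from the previous paragraph this is a small polynomial system in $a,b,d$ with the parameter $\mu$. The strategy is to eliminate variables: from $a^3-a=d(ad-b)$ and $b^3-bd^2=\mu(b-ad)$ and $a^2+b^2-abd=\mu$, I expect that the case $ad-bc=ad-b\ne0$ (which must hold) forces $a=0$; substituting $a=0$ back makes the system collapse to $b^2=\mu$, $d^2 = b^2=\mu$ up to scaling — after renormalizing so that the $(1,4)$ entry is $1$ one gets exactly the claimed form \eqref{eq:M_Rinf gam_q=0} with $b\in\{\sqrt\mu,-\sqrt\mu\}$, and in particular $\varphi$ exists only when $\mu$ is a square.

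The main obstacle is organizing this elimination cleanly: there will be several branches (whether $a=0$ or $a\ne 0$, whether $b=0$, whether $ad-b=0$), and in each the relations $a^3-a=d(ad-b)$, $a^2+b^2-abd=\mu$, $b^3-bd^2=\mu(b-ad)$ must be combined to reach a contradiction or the desired normal form. I would handle the branch $a\ne0$ first: from the third relation, if $b-ad\ne0$ then $\mu=(b^3-bd^2)/(b-ad)$; substituting into $a^2+b^2-abd=\mu$ gives, after clearing denominators, a relation that combined with $a^3-a=d(ad-b)$ should be incompatible with $ad-b\ne0$ (this is where the characteristic-$3$ simplifications of \eqref{eq:M_q=0} — the vanishing of the corner off-diagonal blocks and $3=0$ — are used), forcing $a=0$. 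Once $a=0$ is established the rest is immediate: $ad-bc = -b\ne 0$ so $b\ne0$, the first relation gives $\gamma = 0$ so $R_{\mu,\infty}\varphi=R_{\mu,0}$, the relation $a^2+b^2-abd=\mu$ becomes $b^2=\mu$, and comparing the surviving entries of \eqref{eq:infM} and \eqref{eq:0xM} pins down $d^2=b^2$ and the scalar normalization, yielding \eqref{eq:M_Rinf gam_q=0}.

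Finally I would verify the ``moreover'' claims by direct computation with the normal form \eqref{eq:M_Rinf gam_q=0}: squaring that matrix gives $b^3$ times the identity (so $\varphi^2$ is the identity projectivity, hence $P(\varphi)^2=P$ for all $P$), and applying it to $R_{\mu,\infty}=\Pf(1,0,1,0)$, $R_{\mu,0}=\Pf(0,\mu,0,1)$, and the general $R_{\mu,\gamma}=\Pf(\gamma,\mu,\gamma,1)$ with $\gamma\in\F_q^*$ gives respectively $\Pf(b^3,0,b,0)=\Pf(0,\mu,0,1)$ after scaling (using $b^2=\mu$), $\Pf(1,0,b^2,0)=\Pf(\gamma,\mu,\gamma,1)$ type computations — more precisely one reads off $\gamma' = b^3/\gamma = \pm\sqrt{\mu^3}/\gamma$, which is the stated formula. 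None of this last part is delicate; the whole weight of the lemma is in the elimination argument of the preceding paragraphs.
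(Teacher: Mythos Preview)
Your overall plan is sound and close in spirit to the paper's argument: the paper also normalizes $c=1$, extracts the two relations $a^3-a=d(ad-b)$ and $a^2+b^2-abd=\mu$ from $R_{\mu,\infty}\varphi\in\ell_\mu$, and then brings in further equations to force $a=d=0$. The one structural difference is that the paper uses the \emph{preimage} of $R_{\mu,\infty}$ (writing $[\zeta,\mu,\zeta,1]\MM=[1,0,1,0]$ for an unknown $\zeta$) rather than the image of $R_{\mu,0}$ as you do; either choice works, and yours has the mild advantage of not introducing the extra parameter $\zeta$.

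There is, however, a genuine gap in your elimination step. Membership of $R_{\mu,0}\varphi$ in $\ell_\mu$ imposes \emph{two} scalar conditions, coming from \eqref{eq2:ellmueq}: $x_0=x_2$ \emph{and} $x_1=\mu x_3$. You only record the first, obtaining $b^3-bd^2=\mu(b-ad)$; you never use the second, which (with $c=1$) reads $\mu(a^2d-ab)+b^2d=\mu d^3$. With only (E1), (E2) and your single relation (E3), the system does \emph{not} force $a=0$: substituting $b=a(1+d^2-a^2)/d$ from (E1) and eliminating $\mu$ via (E2) one finds that the surviving case splits as $(a^2-d^2)(a^2+d^2+1)=0$, and while $a^2=d^2$ leads to $\mu=1$, the branch $a^2+d^2+1=0$ is perfectly consistent with (E1)--(E3). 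It is precisely the missing condition $x_1=\mu x_3$ that kills this branch (it reduces, after the same substitutions, to $a^6=1$, hence $a=\pm1$ in characteristic~$3$, which was already excluded). The paper runs into the exact analogue of this branch in its case~(b) and needs both of its equations \eqref{eq:Im_mu_infty_3} and \eqref{eq:Im_mu_infty_4} to dispose of it.

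Two small slips in your wrap-up: once $a=0$, relation (E1) gives $bd=0$, whence $d=0$ (not $d^2=b^2$); and your computation $R_{\mu,\infty}\MM(\varphi)=\Pf(b^3,0,b,0)$ is miscopied --- with the matrix \eqref{eq:M_Rinf gam_q=0} one gets $[1,0,1,0]\MM(\varphi)=[0,b^2,0,1]=R_{\mu,0}$ directly. Neither affects the argument once the missing equation is restored.
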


\begin{proof}
Let $\MM$  be given by \eqref{eq2_M}. By  hypothesis, there exist $\gamma,\zeta\in\F_q$ such that $[1,0,1,0]\MM=[\gamma,\mu,\gamma,1]$ and $[\zeta, \mu ,\zeta ,1]\MM=[1,0,1,0]$. The first equality implies $c^3\neq 0$, see \eqref{eq:infM}. As $\MM$ is defined up to a factor of proportionality, we can put $c=1$. Now,
comparing components of the  vectors $[1,0,1,0]\MM$ with $[1,\gamma,1,\gamma]$ and $[\zeta, \mu ,\zeta ,1]\MM$ with $[1,0,1,0]$, we obtain
\begin{align}\label{eq:Im_mu_infty_1}
&a^3 = a + ad^2 - bd;\db\\
&\label{eq:Im_mu_infty_2}
\mu = a^2+ b^2 -abd.\db\\
&\label{eq:Im_mu_infty_3}
\zeta a^3+ b^3 =  -\mu ad + \mu b + \zeta a + \zeta ad^2 -\zeta bd + bd^2\db\\
&\label{eq:Im_mu_infty_4}
\mu a^2d -\mu ab + \zeta a^2 -\zeta abd + \zeta b^2 + b^2d = 0;\db\\
&\label{eq:Im_mu_infty_5}
 \zeta +  d^3=0.
\end{align}
\begin{description}
  \item[(a)]
 Let $\zeta = 0$. Then, by \eqref{eq:Im_mu_infty_5},  $d=0$. By  \eqref{eq:Im_mu_infty_1}, $a(a-1)(a+1)=0.$
 \begin{description}
   \item[(a1)] Let $a=0$. By \eqref{eq:Im_mu_infty_2}, $\mu$ is a square and $b = \pm \sqrt{\mu}$. In this case, i.e. for $\zeta =a=0,$ $b = \pm \sqrt{\mu}$, also \eqref{eq:Im_mu_infty_3}, \eqref{eq:Im_mu_infty_4}, are satisfied.
   \item[(a2)] Let $a = \pm 1$. Then  \eqref{eq:Im_mu_infty_4}
becomes $\mp \mu b= 0$ that implies $b= 0$ and $ad-bc=0$, contradiction.
 \end{description}

  \item[(b) ]Let  $\zeta \neq 0$. By \eqref{eq:Im_mu_infty_5},  $\zeta=-d^3$, $d=\sqrt[3]{-\zeta}\neq0$. From \eqref{eq:Im_mu_infty_1}, $b=(a+ad^2-a^3)/d$ that implies
$ad-bc=  (a^3-a)/d.$  If $a=0, \pm1$  then $ad-bc=0$, contradiction.
 For the obtained $b$, by \eqref{eq:Im_mu_infty_2}, $\mu=a^2((a^2-d^2)(a^2+1)+1)/d^2$. If $a=\pm d$ then $\mu =  1$, contradiction. For the obtained $\zeta$, $b$, and $\mu$, by \eqref{eq:Im_mu_infty_3} and \eqref{eq:Im_mu_infty_4}, we have
 \begin{align*}
& -a^2(a+1)(a-1)(a+d)(a-d)(a^2+d^2+1)/d=0;\notag\db\\
&a^2(a+1)(a-1)(a+d)(a-d)(a^4 + a^2 - d^4 + d^2 + 1)/d^3=0.
\end{align*}
By above, $a \neq 0,\pm 1, \pm d$. Therefore $a^2+d^2+1=0$ and
$a^4 + a^2 - d^4 + d^2 + 1 = 0$ that implies $a^4 - d^4 = (a+d)(a-d)(a^2+d^2)=0$, $a^2+d^2=0$, contradiction.

The remaining assertions can be obtained by direct computation. \qedhere
\end{description}
\end{proof}

\begin{theorem}\label{th:stab_l_mu_q=0}
Let $q\equiv0\pmod3$.
\begin{description}
  \item[(i) ] Let $\mu$ be a non-square in $\F_q$. Then the size of the subgroup $G_q^{\ell_\mu}$
 of $G_q$ fixing the $\EnG$-line $\ell_\mu$ is $\#G_q^{\ell_\mu}=2$. The length of the orbit of $\ell_\mu$ under $G_q$ is equal to $(q^3-q)/2$. The
elements of $G_q^{\ell_\mu}$ have the matrix form \eqref{eq:M_Rinf_q=0}.

  \item[(ii)] Let $\mu$ be a square in $\F_q$. Then  the subgroup $G_q^{\ell_\mu}$ of $G_q$ fixing the $\EnG$-line $\ell_\mu$ is isomorphic to $C_2 \times C_2$. The length of the orbit of $\ell_\mu$ under $G_q$ is equal to $(q^3-q)/4$. Two elements of $G_q^{\ell_\mu}$ have the matrix form \eqref{eq:M_Rinf_q=0} and the other two ones are given by \eqref{eq:M_Rinf gam_q=0}.
\end{description}
\end{theorem}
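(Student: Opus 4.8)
The plan is to determine $G_q^{\ell_\mu}$ completely by splitting on the image of the point $R_{\mu,\infty}$ under a projectivity stabilizing $\ell_\mu$, then to read off the orbit length from the orbit--stabilizer relation and to identify the group structure from the orders of its elements. Throughout, recall that $q\equiv0\pmod3$ forces $q$ to be odd, and that $\#G_q=\#\mathrm{PGL}(2,q)=q^3-q$ by Theorem~\ref{th2_Hirs}(i).

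The first step is the observation that every $\psi\in G_q^{\ell_\mu}$ permutes the $q+1$ points $\{R_{\mu,\gamma}\mid\gamma\in\F_q^+\}$ of the line $\ell_\mu$, see \eqref{eq2:ellmu}; in particular $R_{\mu,\infty}\psi=R_{\mu,\delta}$ for some $\delta\in\F_q^+$. If $\delta=\infty$, then $\psi$ fixes $R_{\mu,\infty}$ and Lemma~\ref{lem:infTOinf_q=0} shows that $\MM(\psi)$ has the form \eqref{eq:M_Rinf_q=0} with $d\in\{1,-1\}$; conversely, each of these two projectivities lies in $G_q^{\ell_\mu}$ since it sends $R_{\mu,\gamma}$ to $R_{\mu,d\gamma}$. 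Denote them by $I$ (the identity, $d=1$) and $\tau$ ($d=-1$); here $\tau\ne I$ because $-1\ne1$, and $\tau$ is an involution since $P\tau^2=P$ for every point $P$. If instead $\delta\in\F_q$, Lemma~\ref{lem:infTOgam_q=0} applies: when $\mu$ is a non-square no such $\psi$ exists, and when $\mu$ is a square $\MM(\psi)$ has the form \eqref{eq:M_Rinf gam_q=0} with $b\in\{\sqrt\mu,-\sqrt\mu\}$, yielding two further projectivities $\varphi_{+},\varphi_{-}$, each an involution because $P\varphi_{\pm}^2=P$ for all $P$.

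For part (i), $\mu$ a non-square, the second case is vacuous, so $G_q^{\ell_\mu}=\{I,\tau\}$ and $\#G_q^{\ell_\mu}=2$; by \cite[Lemma~2.44(ii)]{Hirs_PGFF} the orbit of $\ell_\mu$ has length $\#G_q/\#G_q^{\ell_\mu}=(q^3-q)/2$, and all its elements have the form \eqref{eq:M_Rinf_q=0}. For part (ii), $\mu$ a square, the four projectivities $I,\tau,\varphi_{+},\varphi_{-}$ are pairwise distinct: $\tau\ne I$ as above, $\varphi_{\pm}$ move $R_{\mu,\infty}$ and therefore differ from both $I$ and $\tau$, and $\varphi_{+}\ne\varphi_{-}$ since $\sqrt\mu\ne-\sqrt\mu$ (as $q$ is odd and $\mu\ne0$); by the two lemmas these four exhaust $G_q^{\ell_\mu}$, so $\#G_q^{\ell_\mu}=4$ and the orbit has length $(q^3-q)/4$. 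Two of the elements ($I$ and $\tau$) have the matrix form \eqref{eq:M_Rinf_q=0} and the other two ($\varphi_{\pm}$) the form \eqref{eq:M_Rinf gam_q=0}. Finally, all three non-identity elements have order $2$, so the group of order $4$ cannot be cyclic, hence $G_q^{\ell_\mu}\cong C_2\times C_2$.

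All the substantive computation is already carried out in Lemmas~\ref{lem:infTOinf_q=0} and~\ref{lem:infTOgam_q=0}, so there is no real computational obstacle. The only points requiring care are the exhaustiveness of the case split---which rests precisely on $R_{\mu,\infty}$ being one of the $q+1$ points of $\ell_\mu$, so that a projectivity fixing $\ell_\mu$ setwise must send it to another point $R_{\mu,\delta}$ of $\ell_\mu$---and the verification that both projectivities produced by Lemma~\ref{lem:infTOinf_q=0} stabilize the whole line $\ell_\mu$, not merely the point $R_{\mu,\infty}$.
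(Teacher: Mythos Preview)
Your proposal is correct and is essentially an expanded version of the paper's own proof, which just reads ``The assertions follow from Lemmas~\ref{lem:infTOinf_q=0}, \ref{lem:infTOgam_q=0} and \cite[Lemma~2.44(ii)]{Hirs_PGFF}.'' You make explicit the case split on the image of $R_{\mu,\infty}$, the distinctness of the four elements, and the identification of the group as $C_2\times C_2$ via its involutions, all of which the paper leaves implicit.
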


\begin{proof}
  The assertions follow from Lemmas \ref{lem:infTOinf_q=0}, \ref{lem:infTOgam_q=0} and \cite[Lemma 2.44(ii)]{Hirs_PGFF}.
\end{proof}

\begin{lemma}\label{lem6:one orbit}
Let $q\equiv0\pmod3$, $q\ge9$.
\begin{description}
  \item[(i)]
  Two lines $\ell_\mu$ and $\ell_{\mu'}$, $\mu, \mu' \in\F_q^*\setminus\{1\}$, $\mu \neq \mu'$  belong to the same orbit of $G_q$ if and only if $\mu=d^4$,  $\mu' = d^4+d^2+1$, $d$ is such that $1-d^2$ is a square, $d \in\F_q^*\setminus\{\pm1\}$, and also $d \neq \pm \sqrt{-1}$ if $q\equiv1\pmod4$.

  \item[(ii)] All lines $\ell_\mu$ with $\mu$ non-square in $\F_q$ belong to distinct orbits of $G_q$.

  \item[(iii)]
 Let $\mu$ by a square in $\F_q$.
  If $q\equiv-1\pmod4$ then at most two $\ell_\mu$-lines belong to the same orbit of $G_q$; there are $(q-3)/8$ pairs of $\ell_\mu$-lines belonging to the same orbit.
  If  $q\equiv1\pmod4$ then at most three $\ell_\mu$-lines belong to the same orbit of $G_q$; there are $(q-9)/8$ pairs of $\ell_\mu$-lines belonging to the same orbit.
  \end{description}
\end{lemma}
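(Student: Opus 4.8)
The plan is to exploit the two families of matrices already obtained in Theorem \ref{th:stab_l_mu_q=0} and, more importantly, the analysis in Lemma \ref{lem:infTOgam_q=0}, which describes exactly when a projectivity can move $R_{\mu,\infty}$ to an affine point $R_{\mu,\gamma}$. A projectivity $\psi\in G_q$ carrying $\ell_\mu$ to $\ell_{\mu'}$ must send the pair $\{R_{\mu,\infty},R_{\mu,0}\}$ to a pair of points on $\ell_{\mu'}$; since $R_{\mu,\infty}=R_{\mu',\infty}=\Pf(1,0,1,0)$ and the only point of $\ell_{\mu'}$ other than $R_{\mu',\infty}$ fixed under the ``$d=-1$'' involution is of the form $R_{\mu',\gamma}$, the substantive case is $R_{\mu,\infty}\psi=R_{\mu',\gamma}$ for some $\gamma\in\F_q$, with the remaining cases reducing to $\mu=\mu'$ by Lemma \ref{lem:infTOinf_q=0} applied to $\ell_{\mu'}$. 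So for \textbf{(i)} I would run the computation of case (b) in the proof of Lemma \ref{lem:infTOgam_q=0} \emph{verbatim but with $\mu$ on the source line and $\mu'$ on the target line}: starting from a matrix $\MM$ of the form \eqref{eq2_M} with $[1,0,1,0]\MM$ proportional to $[\gamma,\mu',\gamma,1]$ and $[\zeta,\mu,\zeta,1]\MM$ proportional to $[1,0,1,0]$. Normalising $c=1$ and eliminating $\zeta,b$ exactly as there, one is left with the relations $\mu' = a^2\bigl((a^2-d^2)(a^2+1)+1\bigr)/d^2$ and $\mu = $ (the value forced by the target-line equations); a short elimination should collapse to $a=\pm d$ being excluded (that forced $\mu'=1$ before) and to the single surviving branch where $a^2+d^2+1=0$, giving $\mu = d^4$ and $\mu' = d^4+d^2+1$ after substituting $a^2=-d^2-1$. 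The square condition ``$1-d^2$ is a square'' is precisely the solvability condition for the auxiliary quadratic that produces the matrix entries $a,b$ over $\F_q$ (equivalently, for $R_{\mu,\infty}$ to actually land on an affine point of $\ell_{\mu'}$), and the exclusion $d\neq\pm\sqrt{-1}$ when $q\equiv1\pmod4$ rules out the degenerate case where $\mu'$ would collapse to $1$ or where $\ell_{\mu'}$ degenerates; in characteristic $3$, $\mu'=d^4+d^2+1=(d^2-1)^2+3d^2 = (d^2-1)^2$, so I should double-check which normalisation of ``$1-d^2$ a square'' is the clean one to state.

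For \textbf{(ii)}: if $\mu$ is a non-square, the entire case (b) of Lemma \ref{lem:infTOgam_q=0} never applies (there $\mu$ was forced into the shape $a^2(\cdots)/d^2$ in a way that, combined with $\mu=d^4$ from part (i)'s normalisation, makes $\mu$ a square), so the only projectivities in play are those of Lemma \ref{lem:infTOinf_q=0}, which fix $\ell_\mu$ pointwise-up-to-the-involution and in particular cannot change $\mu$. Hence distinct non-square $\mu$ give distinct orbits. This part is essentially a one-line corollary of (i) once (i)'s parametrisation is in hand: $\mu=d^4$ is always a fourth power, hence a square.

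For \textbf{(iii)}: now $\mu$ ranges over squares $\neq 0,1$. Part (i) says $\ell_\mu\sim\ell_{\mu'}$ with $\mu\neq\mu'$ iff there is $d$ with $\mu=d^4$, $\mu'=d^4+d^2+1$, $1-d^2$ a square, $d\neq\pm1$, and (when $q\equiv1\bmod4$) $d\neq\pm\sqrt{-1}$. I would set $s=d^2$ (so $s$ runs over nonzero squares, each hit by $d$ and $-d$) and note $\mu=s^2$, $\mu'=s^2+s+1$. The relation on unordered pairs $\{\mu,\mu'\}$ is then governed by whether, given $\mu=s^2$, the value $\mu'=s^2+s+1$ is itself of the form $t^2$ for another eligible $t$ — i.e. whether the ``chain'' $s\mapsto$ (square root of $s^2+s+1$, if it is a square) can continue. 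The claim that at most two (resp. three) lines sit in one orbit amounts to showing this chain has length at most $2$ (resp. $3$); I would verify this by checking that applying the map twice (resp. three times) and demanding the result again land on $\ell_\mu$ forces a return to the start, via the explicit group structure in Theorem \ref{th:stab_l_mu_q=0}(ii): the stabiliser of a square-$\mu$ line is $C_2\times C_2$ of order $4$, so an orbit of size $(q^3-q)/4$ among the $\ell_\mu$ contains at most $\#G_q/((q^3-q)/4)\cdot(\text{something})$ of them — more precisely, I would count via orbit–stabiliser the number of $\ell_{\mu}$-lines in a common $G_q$-orbit as $[\text{stab of the orbit} : \text{stab fixing the particular line}]$-flavoured bookkeeping and match it against the parity of $q\bmod 4$. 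The pair-count $(q-3)/8$ resp. $(q-9)/8$ then comes from: there are $(q-1)/2$ nonzero squares, hence (at most) $(q-1)/2 - 1 = (q-3)/2$ square values $\mu\neq 1$ after removing $\mu=1$; discarding those for which $1-d^2$ fails to be a square, and those with $d=\pm\sqrt{-1}$ in the $q\equiv1\bmod4$ case, and finally dividing by the orbit sizes ($2$ lines per orbit, so pairs, when $q\equiv-1\bmod4$; triples when $q\equiv1\bmod4$, each triple contributing — I must be careful here — two ``pairs'' in the sense of adjacent chain links, which is why the $q\equiv1\bmod4$ count subtracts more and still divides by $8$).

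\textbf{Main obstacle.} The algebra of part (i) is routine-but-delicate: the elimination in case (b) of Lemma \ref{lem:infTOgam_q=0} was already long, and rerunning it with two distinct parameters $\mu,\mu'$ doubles the bookkeeping and, crucially, requires identifying \emph{which} polynomial's solvability over $\F_q$ is equivalent to the stated ``$1-d^2$ is a square'' condition — getting that equivalence exactly right (and handling characteristic $3$ consistently, where the expression $d^4+d^2+1$ factors differently) is where errors are most likely. The second genuine difficulty is the combinatorial finish in (iii): proving the chain $s\mapsto\sqrt{s^2+s+1}$ has bounded length is not purely formal, and pinning the pair-counts to the exact values $(q-3)/8$ and $(q-9)/8$ — rather than to those values up to a small additive constant — will require carefully tracking the endpoint/degenerate cases ($\mu=1$, $1-d^2$ a non-square, $d^2=-1$) and confirming none is double-counted. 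I would do (iii) by first establishing the qualitative ``at most two/three'' bound from the $C_2\times C_2$ stabiliser via orbit–stabiliser, and only then extract the exact pair-count as a clean consequence.
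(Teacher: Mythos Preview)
Your plan for (i) is in the right spirit, but two specifics need correction. First, the surviving relation is $a^2+d^2-1=0$ (not $+1$): after the elimination one obtains $b=-ad$ and the first image equation factors as $(a+d)(a-d)(a^2+d^2-1)$, with $a=\pm d$ forcing $\mu=1$; thus $a^2=1-d^2$, which is exactly where ``$1-d^2$ is a square'' comes from. Second, the paper does \emph{not} rerun case~(b) of Lemma~\ref{lem:infTOgam_q=0} verbatim. It imposes three point conditions --- the images of $R_{\mu,\infty}$ and $R_{\mu,0}$, \emph{and} the preimage of $R_{\mu',\infty}$ --- rather than your two. The third is logically redundant but is what makes the elimination short; with only your two conditions the algebra is longer and it is not clear your ``short elimination'' actually closes without that extra equation.

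The genuine gap is in (iii). Your orbit--stabiliser sketch is a non-sequitur: knowing $\#G_q^{\ell_\mu}=4$ tells you the orbit contains $(q^3-q)/4$ lines in total, but says nothing about how many of those happen to be of the form $\ell_{\mu'}$; your ``chain'' picture likewise gives no handle on the exact counts. The paper's route is entirely different and is the idea you are missing. It exploits the symmetry of the orbit relation: if $\ell_\mu\sim\ell_{\mu'}$ then by (i) there is $d$ with $\mu=d^4$, $\mu'=d^4+d^2+1$, \emph{and also} a $d'$ with $\mu'=d'^4$, $\mu=d'^4+d'^2+1$. Adding the two expressions for $\mu+\mu'$ gives (in characteristic~$3$) $d^2+d'^2-1=0$, the affine equation of an irreducible conic. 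One then counts affine points on this conic ($q+1$ if the line at infinity is external, $q-1$ if secant --- decided by whether $-1$ is a square), subtracts the forbidden pairs $(0,\pm1),(\pm1,0)$ and, when $q\equiv1\pmod4$, the four pairs with $d$ or $d'$ equal to $\pm\sqrt{-1}$, and divides by the $8$-fold symmetry $(\pm d_1,\pm d_2)\leftrightarrow(\pm d_2,\pm d_1)$ to obtain $(q-3)/8$ resp.\ $(q-9)/8$. The ``at most two/three'' bound comes not from orbit--stabiliser but from the elementary observation that $\mu$ has at most two square roots: a third line $\ell_{\mu''}$ in the orbit forces $d''^2=-d'^2$, possible only when $-1$ is a square, and a fourth would force $d'''^2\in\{d'^2,d''^2\}$, hence $\mu'''\in\{\mu',\mu''\}$.
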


\begin{proof}
\begin{description}
  \item[(i)]
  Let $\psi\in G_q$, $\ell_\mu \psi = \ell_{\mu'}$, $\mu \neq \mu'$, let $\MM(\psi)$ be a matrix corresponding to $\psi$. We find it as is a version of $\MM$ of \eqref{eq2_M}. Note that $R_{\mu,\infty}=R_{{\mu'},\infty}=\Pf(1,0,1,0)$ and $R_{\mu,\infty}\psi$  is given by \eqref{eq:infM}. By Lemma \ref{lem:infTOinf_q=0},
if $R_{\mu,\infty} \psi = R_{{\mu'},\infty}$,
 then $R_{\mu,0} \psi = R_{\mu,0}$ that implies $\ell_\mu \psi = \ell_{\mu'}$.
\begin{description}
  \item[(a)] Let $\Pf(1,0,1,0) \psi = \Pf(0,\mu',0,1)$. Then $a=d=0$,  $b, c \neq 0$. As $\MM$ is defined up to a factor of proportionality, we can put $c=1$, obtaining $\mu'=b^2$. On the other hand, $\Pf(0,\mu,0,1)\psi = \Pf(b^3,0,\mu b,0)$ that implies $\mu = b^2$, so $\mu =\mu'$, contradiction.
  \item[(b)] Let $\Pf(1,0,1,0) \psi = \Pf(\zeta,\mu',\zeta,1)$, $\zeta \in \F^*$. Then $a, c \neq 0$ and putting again $c=1$ we obtain
\begin{align}
   & a^3-a-ad^2+bd=0;\db\label{eq6:equiv_q=0_eq1}\\
   &  \mu' = a^2+b^2-abd. \label{eq6:equiv_q=0_eq2}
\end{align}
\begin{description}
  \item[(b1)] Let $\Pf(0,\mu,0,1) \psi = \Pf(1,0,1,0)$. By  \eqref{eq:0xM} with $c=1$ and $a\ne0$, we consistently obtain $d=0$, $\mu ab=0$,  $b = 0$; so $ad-bc=0$, contradiction.
  \item[(b2)] Let $\Pf(0,\mu,0,1) \psi = \Pf(0,\mu',0,1)$. By  \eqref{eq:0xM} with $c=1$ and $a\ne0$, we consistently obtain $b=0$, $\mu ad=0$,  $d=0$; so $ad-bc=0$, contradiction.
  \item[(b3)] Let $\Pf(0,\mu,0,1) \psi = \Pf(\zeta',\mu',\zeta',1)$, $\zeta' \in \F^*, \zeta' \neq \zeta$. By  \eqref{eq:0xM} with $c=1$ and \eqref{eq6:equiv_q=0_eq2}, we have
      \begin{align}\label{eq6:equiv_q=0_eq3}
&\mu(b-ad)+bd^2-b^3=0;\db\\
&\label{eq6:3equiv_q=0_eq1_0}
\mu(a^2d-ab)+b^2d-\mu' d^3= (ad -b)(\mu a -ad^2 + bd^3 -bd)=0.
\end{align}
As $c=1$, $a\neq0$, from $ad-bc \neq 0$ and \eqref{eq6:3equiv_q=0_eq1_0}  we obtain
\begin{equation}\label{eq6:equiv_q=0_eq4}
\mu = (ad^2 - bd^3 +bd)/a.
\end{equation}
Now consider  $R_{\mu,\gamma}\psi = \Pf(\gamma,\mu,\gamma,1)\psi$,  $\gamma \in \F^*$. By \eqref{eq:M_q=0},
\begin{align}\label{eq6:equiv_q=0_im_point}
& [\gamma,\mu,\gamma,1]\MM=
[\gamma a^3+ b^3,~
\mu a^2 d -\mu ab + \gamma a^2 -\gamma abd +\gamma b^2 + b^2 d,\db\\
&-\mu ad +\mu b + \gamma a+ \gamma a d^2 -\gamma bd + b d^2,~
\gamma + d^3].\notag
\end{align}
As $R_{\mu,\infty}\psi, R_{\mu,0}\psi \neq R_{\mu',\infty}$, there exists $ \gamma' \in \F^*$ such that $R_{\mu,\gamma'}\psi =  R_{\mu',\infty}$.
By \eqref{eq6:equiv_q=0_im_point} we obtain $\gamma' = -d^3$ and, using also \eqref{eq6:equiv_q=0_eq4}, $(ad -b)^2(a^2d -ab -d^3 + d)=0$ from which, as $ad-b \neq 0$ and $a\neq 0$, it follows
\begin{equation}\label{eq6:equiv_q=0_eq5}
b = (a^2d -d^3 + d)/a.
\end{equation}
From  \eqref{eq6:equiv_q=0_eq1}, substituting the value of $b$, we obtain
$(a + d)(a - d)(a^2 + d^2 -1)=0$. If $a = \pm d$ then, by \eqref{eq6:equiv_q=0_eq5}, $b = \pm 1$, that implies, by \eqref{eq6:equiv_q=0_eq4},
$\mu = 1$, contradiction. Therefore
\begin{align}\label{eq6:equiv_q=0_eq6}
&a^2 + d^2 -1=0.
\end{align}
This implies $1-d^2$ is a square and from \eqref{eq6:equiv_q=0_eq5}
\begin{equation}\label{eq6:equiv_q=0_eq5new}
b = -ad.
\end{equation}
From \eqref{eq6:equiv_q=0_eq4}, \eqref{eq6:equiv_q=0_eq2}, \eqref{eq6:equiv_q=0_eq6}, and \eqref{eq6:equiv_q=0_eq5new}, we obtain
\begin{equation}\label{eq6:equiv_q=0_eq7}
\mu = d^4, \; \mu' = d^4+d^2+1 = (d+1)^2(d-1)^2.
\end{equation}
For $c=1$, by  \eqref{eq6:equiv_q=0_eq5new},  $ad-bc=-ad \neq 0$.
\end{description}
\end{description}
Now we consider forbidden values of $d$.
As $\mu \neq 1$, $d^4 \neq 1$, so
$d \ne \pm1$, $d \neq \pm \sqrt{-1}$. By \cite[Section 1.5 (ix)]{Hirs_PGFF} and \cite[Section 1.5 (x)]{Hirs_PGFF}, $-1$ is a square if and only if $q\equiv1\pmod4$, whereas $-1$ is not a square if and only if $q\equiv -1 \pmod4$. Also $d \neq 0$, otherwise $\mu=0$, contradiction.
Note that $\mu' =0$ implies $d=\pm1 $, whereas  $\mu' =1$ implies $d=0$ or $d=\pm\sqrt{-1}$, so no other condition on $d$ is obtained considering $\mu'$.

  \item[(ii)] The assertion follows from the case (i).

  \item[(iii)]
  If $\ell_\mu$ and $\ell_{\mu'}$ belong to the same orbit, then there exists a projectivity $\psi' \in G_q$  other than $\psi$ of (i) such that $\ell_{\mu'}\psi' =\ell_{\mu}$. Repeating for  $\psi'$ the same argument as in the case (i) and taking in mind that $\sqrt{-1}$ does not exist for $q\equiv-1\pmod3$, we obtain
\begin{equation}\label{eq6:equiv_q=0_eq8}
\mu' = d'^4, \; \mu = d'^4+d'^2+1 = (d'+1)^2(d'-1)^2, \; d' \in\F_q^*\setminus\{\pm1, \pm\sqrt{-1}\}.\\
\end{equation}
From \eqref{eq6:equiv_q=0_eq7}, \eqref{eq6:equiv_q=0_eq8}  we have 
$\mu +\mu' = d^4+d'^4 = d'^4+d'^2+1 + d^4+d^2+1$ that implies
\begin{equation}\label{eq6:equiv_q=0_eq11}
d^2+d'^2-1=0 , \; d,d' \in\F_q^*\setminus\{\pm1, \pm\sqrt{-1}\}.
\end{equation}

We can see \eqref{eq6:equiv_q=0_eq11}  as the affine equation of the  irreducible conic $d^2+d'^2-z^2=0$, with respect to the infinite line $z=0$.

If  $q\equiv-1\pmod4$, $-1$ is not a square, so the line $z=0$ is external to the conic, as $d^2 \neq -d'^2$ if $d, d' \neq 0$. Therefore all the points of the conic are affine points, so there exist $q+1$ pairs $(d,d')$ satisfying  \eqref{eq6:equiv_q=0_eq11}. However not all pairs are feasible: by the constraints on $d$, the 4 pairs: $(0,\pm1)$, $(\pm1,0)$ must be excluded.

If  $q\equiv 1\pmod4$, $-1$ is a square, so the line $z=0$ has two common points with the conic, namely $\Pf(1,\pm\sqrt{-1},0)$. Therefore there exist $q-1$ pairs $(d,d')$ satisfying  \eqref{eq6:equiv_q=0_eq11}. However not all pairs are feasible: by the constraints on $d$, the 4 pairs $(0,\pm1)$, $(\pm1,0)$  and the 4 pairs $(\pm\sqrt{-1},\pm\sqrt{-1})$,  $(\pm\sqrt{-1},\mp\sqrt{-1})$  must be excluded.

By the symmetry of the equation, if the pair $(d_1, d_2)$ satisfies \eqref{eq6:equiv_q=0_eq11},  the eight pairs $\{(\pm d_i, \pm d_j), (\pm d_i, \mp d_j), i,j \in \{1,2\}, i \ne j\}$ satisfy the same equation. All the eight pairs produce the same unordered pair of values $\{\mu_1=d_1^4, \mu_2=d_2^4\}$, so in total we have $(q-3)/8$ different  pairs of lines $\ell_\mu, \ell_{\mu'}$ belonging to the same orbit for  $q\equiv -1\pmod4$, whereas for  $q\equiv 1\pmod4$ we have $(q-9)/8$ different  pairs of lines $\ell_\mu, \ell_{\mu'}$ belonging to the same orbit.

However it can happen that more than 2 lines of type $\ell_\mu$ belong to the same orbit. If the lines $\ell_\mu, \ell_{\mu'}, \ell_{\mu''}$, $\mu, \mu', \mu''$ pairwise distinct, belong to the same orbit, then there exist $d', d''$ such that
$\mu=d'^4=d''^4$, $d'^4+d'^2+1=\mu'$, $d''^4+d''^2+1=\mu''$.
As both $d'^2$ and $d''^2$ are square roots of $\mu$,  $d'^2 = - d''^2$, otherwise  $\mu' = \mu''$. This implies $-1$ is a square that, by \cite[Section 1.5 (ix)]{Hirs_PGFF}, is equivalent to say $q\equiv1\pmod4$.

If there exists $\mu'''$ belonging to the same orbit $\ell_\mu, \ell_{\mu'}, \ell_{\mu''}$ belong to, then there exists  $d'''$ such that
$\mu=d'''^4$, $d'''^4+d'''^2+1=\mu'''$. As $d'''^2$ is a square root of $\mu$,
$d'''^2= d'^2$ or $d'''^2= d''^2$, so  $\mu'''=\mu'$ or $\mu'''=\mu''$.
Therefore at most three lines of type $\ell_{\mu}$ can belong to the same orbit. \qedhere
\end{description}
\end{proof}

\begin{theorem}\label{th6:equiv_l_muq=0}
Let $q\equiv0\pmod3$, $q\ge9$. Let $\mathfrak{n}_q$ be the total number of distinct orbits of $\ell_\mu$-lines. Let $\mathfrak{S}_q$ be the total number of $\EnG$-lines contained in these orbits. Recall that the total number of $\EnG$-lines is $\#\OO_6=\#\EnG=(q^2-q)(q^2-1)$.
\begin{description}
    \item[(i)] Let $q\equiv-1\pmod4$.  We have $(q-1)/2$ orbits of length $(q^3-q)/2$ generated by $\ell_\mu$-lines, where $\mu$ is a non-square in $\F_q$, and
$(3q-9)/8$ orbits of length $(q^3-q)/4$ generated by $\ell_\mu$-lines, where $\mu$ is a square. This implies $\mathfrak{n}_q=(7q-13)/8$, $\mathfrak{S}_q=(q^3-q)(11q-17)/32\thickapprox\frac{11}{32}\#\OO_{\EnG}\thickapprox0.343\#\OO_{\EnG}$

  \item[(ii)]
 Let $q\equiv1\pmod4$.
  We have $(q-1)/2$ orbits of length $(q^3-q)/2$ generated by $\ell_\mu$-lines, where $\mu$ is a non-square in $\F_q$, and $(3q-3)/8+2\mathfrak{t}_q$ orbits of length $(q^3-q)/4$ generated by $\ell_\mu$-lines, where $\mu$ is a square, $\mathfrak{t}_q$ is the
  number of triples of $\ell_\mu$-lines belonging to the same orbit, $0\le\mathfrak{t}_q\le (q-9)/24$.  This implies $(7q-7)/8\le \mathfrak{n}_q\le(23q-39)/24$, $(q^3-q)(11q-7)/32\le\mathfrak{S}_q\le (q^3-q)(35q-45)/96,~0.343\#\OO_{\EnG}\thickapprox\frac{11}{32}\#\OO_{\EnG}\lesssim\mathfrak{S}_q\lesssim\frac{35}{96}\#\OO_{\EnG}.$

\end{description}
\end{theorem}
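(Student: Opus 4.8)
The plan is to assemble the statement from the structural results already proved in this section, treating it as a bookkeeping exercise once the orbit lengths and the equivalence pattern among the $\ell_\mu$ are in hand. First I would recall the raw ingredients: by Theorem \ref{th:stab_l_mu_q=0}(i), each $\ell_\mu$ with $\mu$ a non-square has stabilizer of order $2$, hence orbit length $(q^3-q)/2$; by Theorem \ref{th:stab_l_mu_q=0}(ii), each $\ell_\mu$ with $\mu$ a square has stabilizer $C_2\times C_2$, hence orbit length $(q^3-q)/4$. Lemma \ref{lem6:one orbit}(ii) says the non-square $\ell_\mu$'s are pairwise inequivalent, and since the number of non-squares in $\F_q^*$ is $(q-1)/2$ and $1$ is a square, all $(q-1)/2$ values $\mu$ non-square (none of which equals $1$) give exactly $(q-1)/2$ distinct orbits of length $(q^3-q)/2$. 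This part is identical in cases (i) and (ii).

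Next I would count the orbits coming from square values $\mu\in\F_q^*\setminus\{1\}$. There are $(q-1)/2$ squares in $\F_q^*$; removing $\mu=1$ leaves $(q-3)/2$ admissible square values, each contributing a line $\ell_\mu$ of orbit length $(q^3-q)/4$. These $(q-3)/2$ lines are grouped into orbits according to Lemma \ref{lem6:one orbit}(i),(iii). In case (i), $q\equiv-1\pmod4$, Lemma \ref{lem6:one orbit}(iii) gives that orbits contain at most two such lines and there are exactly $(q-3)/8$ pairs that are ``glued''; hence the number of orbits is $(q-3)/2-(q-3)/8=3(q-3)/8=(3q-9)/8$. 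In case (ii), $q\equiv1\pmod4$, orbits contain at most three such lines, there are $(q-9)/8$ pairs, and writing $\mathfrak t_q$ for the number of triples, each triple accounts for $3$ of the $(q-9)/8$ pairs (a $3$-element orbit contributes $\binom 32=3$ pairs) while the remaining pairs are genuine $2$-element orbits; so $2\mathfrak t_q$ of those $(q-9)/8$ pairs lie inside triples, leaving $(q-9)/8-3\mathfrak t_q$ honest $2$-orbits and $\mathfrak t_q$ triples, for a total of $(q-3)/2-2((q-9)/8-3\mathfrak t_q)-2\mathfrak t_q=(3q-3)/8+2\mathfrak t_q$ orbits of square type. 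The bound $0\le\mathfrak t_q\le(q-9)/24$ is immediate since $3\mathfrak t_q\le (q-9)/8$. Adding the $(q-1)/2$ non-square orbits gives $\mathfrak n_q$: in case (i), $(q-1)/2+(3q-9)/8=(7q-13)/8$; in case (ii), $(q-1)/2+(3q-3)/8+2\mathfrak t_q=(7q-7)/8+2\mathfrak t_q$, so $(7q-7)/8\le\mathfrak n_q\le(7q-7)/8+(q-9)/12=(23q-39)/24$.

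Finally I would compute $\mathfrak S_q$, the number of $\EnG$-lines in these orbits. In case (i) this is $\tfrac{q-1}{2}\cdot\tfrac{q^3-q}{2}+\tfrac{3q-9}{8}\cdot\tfrac{q^3-q}{4}=(q^3-q)\bigl(\tfrac{q-1}{4}+\tfrac{3q-9}{32}\bigr)=(q^3-q)(11q-17)/32$; comparing with $\#\OO_{\EnG}=(q^2-q)(q^2-1)=(q^3-q)(q-1)$ gives the ratio $(11q-17)/(32(q-1))\to 11/32\approx0.343$. In case (ii) the $\ell_\mu$ of square type always have orbit length $(q^3-q)/4$ regardless of whether they sit in a pair or a triple, so $\mathfrak S_q=\tfrac{q-1}{2}\cdot\tfrac{q^3-q}{2}+\bigl(\tfrac{3q-3}{8}+2\mathfrak t_q\bigr)\tfrac{q^3-q}{4}$; plugging in $\mathfrak t_q=0$ and $\mathfrak t_q=(q-9)/24$ yields the stated lower and upper bounds $(q^3-q)(11q-7)/32$ and $(q^3-q)(35q-45)/96$, with asymptotic ratios between $11/32$ and $35/96$. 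The only genuinely delicate point is the combinatorial reconciliation in case (ii) between ``number of pairs'' and ``number of triples'': one must be careful that Lemma \ref{lem6:one orbit}(iii) counts \emph{pairs} of lines in a common orbit (so a triple is counted three times there), and that the orbit count must subtract accordingly; everything else is arithmetic that I would verify but not belabor. I should also note explicitly that $q\ge9$ guarantees all the displayed fractions are nonnegative integers of the asserted shape (e.g. $q\equiv-1\pmod4$ and $q\equiv0\pmod3$ force $q\equiv 3\pmod{12}$, making $(q-3)/8$ an integer, etc.), which is the last thing to check.
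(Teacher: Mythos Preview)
Your approach is precisely the paper's: its proof says in one line that everything follows from Theorem \ref{th:stab_l_mu_q=0} and Lemma \ref{lem6:one orbit} by ``obvious direct computations'', and you spell those computations out. The non-square count and all of case (i) are handled correctly.

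In case (ii), however, your bookkeeping slips. You rightly observe that a triple contributes $\binom{3}{2}=3$ to the pair count of Lemma \ref{lem6:one orbit}(iii), so the number of genuine two-element orbits is $s_2=(q-9)/8-3\mathfrak t_q$ (the ``$2\mathfrak t_q$'' in the preceding sentence is a typo for $3\mathfrak t_q$). But the displayed equality
\[
\frac{q-3}{2}-2\Bigl(\frac{q-9}{8}-3\mathfrak t_q\Bigr)-2\mathfrak t_q \;=\; \frac{3q-3}{8}+2\mathfrak t_q
\]
is false: the left-hand side equals $(q+3)/4+4\mathfrak t_q$, not $(3q-3)/8+2\mathfrak t_q$. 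The clean way to count is to write, with $s_1,s_2,\mathfrak t_q$ the numbers of square-type orbits meeting $1,2,3$ of the $\ell_\mu$ respectively, the relations $s_1+2s_2+3\mathfrak t_q=(q-3)/2$ and $s_2+3\mathfrak t_q=(q-9)/8$; subtracting gives
\[
s_1+s_2+\mathfrak t_q=\frac{q-3}{2}-\frac{q-9}{8}+\mathfrak t_q=\frac{3q-3}{8}+\mathfrak t_q.
\]
You should redo this step and then recompute the bounds on $\mathfrak n_q$ and $\mathfrak S_q$; in particular, check whether the coefficient of $\mathfrak t_q$ you obtain agrees with the one asserted in the statement, since this is exactly the ``delicate point'' you yourself flagged.
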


\begin{proof}
The assertions follow from Theorem \ref{th:stab_l_mu_q=0} and Lemma \ref{lem6:one orbit} and obvious direct computations. In the point (ii),
 the left inequalities correspond to the case when there are no triples of  $\ell_\mu$-lines belonging to the same orbit, the right ones correspond to the case when all equivalent $\ell_\mu$-lines come in triples.
\end{proof}

\begin{example}\label{examp6}
  Using Magma, for $q\equiv1\pmod4$, we obtain the following:
  \begin{align}\label{eq6:example1}
   q=3^{2m}\equiv1\pmod4,~m=1,\ldots5, ~\mathfrak{t}_q=\frac{q-(-1)^m\sqrt{q}-15}{48}.
  \end{align}
  This implies that for $q=3^{2m},~m=1,\ldots5$, we have exactly $(10q-24-(-1)^m\sqrt{q})/24$ orbits of length $(q^3-q)/4$ generated by $\ell_\mu$-lines, where $\mu$ is a square. Also, $\mathfrak{S}_q= (q^3-q)(34q-48-(-1)^m\sqrt{q})/96\thickapprox\frac{17}{48}\#\OO_{\EnG} \thickapprox0.354\#\OO_{\EnG}$.
\end{example}

\begin{conjecture}
  The results of Example \emph{\ref{examp6}} hold for all $m\ge1$.
\end{conjecture}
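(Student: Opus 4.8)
The plan is to reduce the evaluation of $\mathfrak{t}_q$ to a point count on an explicit elliptic curve over $\F_3$, and then to use supersingularity to pin that count down over $\F_q=\F_{3^{2m}}$. First, from the proof of Lemma~\ref{lem6:one orbit}(iii) one extracts a clean description of a triple: three lines $\ell_\mu,\ell_{\mu'},\ell_{\mu''}$ lie in one $G_q$-orbit precisely when, for some $s\in\F_q\setminus\{0,1,-1\}$ with $s$, $1-s$ and $1+s$ all nonzero squares in $\F_q$ (``$-s$ a square'' being automatic because $-1$ is a square for $q\equiv1\pmod4$), one has $\mu=s^2$, $\mu'=s^2+s+1=(1-s)^2$ and $\mu''=s^2-s+1=(1+s)^2$. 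The combinatorial core is then to pass to fourth roots: writing $u$ for a fourth root of one of $\mu,\mu',\mu''$, so that $u^2\in\{\pm s,\pm(1-s),\pm(1+s)\}$, one shows that the twelve fourth roots of $\mu,\mu',\mu''$ are exactly the $u\in\F_q^*$ for which $1-u^2$ and $1+u^2$ are both nonzero squares, and that each such $u$ generates a unique (and, by Lemma~\ref{lem6:one orbit}, maximal) triple. This gives
\begin{equation*}
\mathfrak{t}_q=\tfrac1{12}\,\#\bigl\{u\in\F_q^*:\ 1-u^2\text{ and }1+u^2\text{ are nonzero squares in }\F_q\bigr\}.
\end{equation*}

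Next I would turn the right-hand side into a character sum. With $\chi$ the quadratic character of $\F_q$, expanding $\tfrac14\bigl(1+\chi(1-u^2)\bigr)\bigl(1+\chi(1+u^2)\bigr)$ and summing over $u$ produces the three sums $\sum_u\chi(1-u^2)$, $\sum_u\chi(1+u^2)$, $\sum_u\chi(1-u^4)$. The first two are elementary — for $q\equiv1\pmod4$ each equals $-1$, being the affine point count $q-1$ of the conic $y^2=1\mp u^2$ minus $q$ — while $\sum_u\chi(1-u^4)=\#D(\F_q)-2-q$ is controlled by the elliptic curve $D\colon y^2=1-u^4$, the $-2$ accounting for its two points at infinity (which are $\F_q$-rational since $q\equiv1\pmod4$). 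Removing the contributions of the degenerate values $u=0,\pm1,\pm\sqrt{-1}$ leaves
\begin{equation*}
\mathfrak{t}_q=\frac{q-15-a_q(D)}{48},\qquad a_q(D):=q+1-\#D(\F_q).
\end{equation*}

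It remains to compute $\#D(\F_{3^{2m}})$. The curve $D\colon y^2=1-u^4$ has the order-$4$ automorphism $(u,y)\mapsto(\sqrt{-1}\,u,y)$, hence $j(D)=1728\equiv0\pmod3$, so $D$ is supersingular in characteristic $3$. A direct count over $\F_3$ gives $\#D(\F_3)=4$ (the two points at infinity are not $\F_3$-rational, $-1$ being a non-square in $\F_3$), so $a_3(D)=0$ and the Frobenius $\pi$ of $D/\F_3$ satisfies $\pi^2=-3$; therefore over $\F_q=\F_{3^{2m}}$ the Frobenius is $\pi^{2m}=(-3)^m$, $a_q(D)=2(-3)^m=2(-1)^m\sqrt q$, and $\#D(\F_q)=q+1-2(-1)^m\sqrt q$. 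Substituting gives $\mathfrak{t}_q=\bigl(q-15-2(-1)^m\sqrt q\bigr)/48$, which equals $0$ when $q=9$, in accordance with Theorem~\ref{th6:equiv_l_muq=0}; the announced counts of the orbits of length $(q^3-q)/4$ and of $\mathfrak{S}_q$ then follow immediately from Theorem~\ref{th6:equiv_l_muq=0}.

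The two steps I expect to be delicate are the combinatorial reduction and the characteristic-$3$ point count. For the former one must verify that the multiplicity with which each triple is hit by the $u$'s is exactly $12$: this means checking that all twelve fourth roots are ``valid'' in the precise sense of Lemma~\ref{lem6:one orbit}(i) — in particular respecting the exclusions $d\ne\pm1,\pm\sqrt{-1}$ — and handling the degenerate loci with care, as well as re-confirming from Lemma~\ref{lem6:one orbit} that the orbit-equivalence components among $\ell_\mu$-lines really have size at most $3$. For the latter, while $j(D)=0$ in characteristic $3$ is immediate, the exact value $a_3(D)=0$ (and hence the coefficient of $\sqrt q$ and the sign pattern $(-1)^m$) relies on the explicit count $\#D(\F_3)=4$ and on the behaviour of Frobenius in the quadratic tower over a supersingular curve; comparison with the Magma data of Example~\ref{examp6} for $m=1,\dots,5$ gives a direct consistency check and fixes the exact form of the statement.
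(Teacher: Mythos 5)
The paper offers no proof of this statement at all: it is a genuine conjecture, supported only by the Magma computations behind Example \ref{examp6} for $m\le 5$. So your proposal is not an alternative route to an existing argument but a candidate proof where none exists, and its skeleton is sound. I checked the key reductions: the characterization of a triple as $\{s^2,(1-s)^2,(1+s)^2\}$ with $s,1-s,1+s$ nonzero squares does follow from Lemma \ref{lem6:one orbit} (using $d^4+d^2+1=(1-d^2)^2$ in characteristic $3$ and $d''^2=-d'^2$ from part (iii)); the $12$-to-$1$ correspondence between the set $S=\{u\in\F_q^*: 1\pm u^2 \text{ nonzero squares}\}$ and triples holds (each triple is based at $6$ admissible values of $s$, each with $2$ square roots); the character-sum evaluation gives $\#S=(\#D(\F_q)-16)/4$; and $\#D(\F_3)=4$, hence $a_3=0$, $\pi^2=-3$ and $a_{3^{2m}}=2(-3)^m$ are all correct. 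This yields $\mathfrak{t}_q=\bigl(q-15-2(-1)^m\sqrt q\bigr)/48$, which is a non-negative integer satisfying $\mathfrak{t}_q\le(q-9)/24$ for all $m\ge1$ and gives $\mathfrak{t}_9=0$, $\mathfrak{t}_{81}=1$, etc.

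The one point you must confront head-on rather than defer to a ``consistency check'': your formula has $2(-1)^m\sqrt q$ where Example \ref{examp6} prints $(-1)^m\sqrt q$, so you are \emph{not} proving the conjecture as literally stated. The printed formula cannot be correct --- at $m=1$ it gives $-3/48$ and at $m=2$ it gives $57/48$, neither a non-negative integer, while Theorem \ref{th6:equiv_l_muq=0} forces $\mathfrak{t}_9=0$ --- so Example \ref{examp6} must contain a typographical error and your derivation supplies the corrected statement; say so explicitly. Relatedly, be wary of the closing claim that the orbit counts ``follow immediately from Theorem \ref{th6:equiv_l_muq=0}'': the coefficient $2\mathfrak{t}_q$ there is itself inconsistent with Lemma \ref{lem6:one orbit}(iii), since with $n_2+3n_3=(q-9)/8$ equivalent unordered pairs and $n_3=\mathfrak{t}_q$ triples the number of orbits from square $\mu$ is $(q-3)/2-n_2-2n_3=(3q-3)/8+\mathfrak{t}_q$, not $(3q-3)/8+2\mathfrak{t}_q$ (check $q=81$: $24+6+1=31$ orbits, not $32$). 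So the final expressions for the number of orbits of length $(q^3-q)/4$ and for $\mathfrak{S}_q$ need to be recomputed from your $\mathfrak{t}_q$ with the corrected coefficient, and they too will differ from the printed ones. None of this undermines your argument --- it means you are proving a repaired version of the example --- but the write-up should make the repairs explicit rather than presenting the result as confirming the printed formulas.
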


\section{On the orbits of lines $\ell_\mu$, $\mu\in\F_q^*\setminus\{1,1/9\}$, $q$ odd, $q\not \equiv0\pmod3$}\label{sec:orbitlinesLmuOdd}

\begin{lemma}\label{lem7:infTOgamqodd}
Let $q$ be odd, $q \not \equiv0\pmod3$. Let  $\psi \in G_q^{\ell_\mu}$ be such that $ R_{\mu,\infty}\psi=R_{\mu,\gamma}$, $\gamma\in\F_q$. If $\mu$ is not a square in $\F_q$ then $\psi$ does not exist. If $\mu$ is a square in $\F_q$ and either $\mu \neq -1/3$, or  $q \not\equiv1\pmod {12}$, or $-1/3$ is not a fourth power,  then a matrix $\MM(\psi)$ corresponding to $\psi$ is as follows.
 \begin{equation}\label{eq7:M_Rinf gamqodd}
   \MM(\psi)=
   \left[
 \begin{array}{cccc}
 0&0&0&1\\
 0&0&b&0\\
 0&b^2&0&0\\
 b^3&0&0&0
 \end{array}
  \right],~b\in\{\sqrt{\mu},-\sqrt{\mu}\}.
\end{equation}
Moreover, for any point $P\in\PG(3,q)$, we have $P(\MM(\psi))^2=P$. Also,
\begin{equation*}
R_{\mu,\infty}\MM(\psi)=R_{\mu,0},~R_{\mu,0}\MM(\psi)=R_{\mu,\infty},~R_{\mu,\gamma}\MM(\psi)=R_{\mu,\gamma'},~\gamma,\gamma'\in\F_q^*,~\gamma'=
\frac{\pm\sqrt{\mu^3}}{\gamma}.
\end{equation*}
If $\mu = -1/3$ and  $q \equiv1\pmod {12}$ and $-1/3$ is a fourth power,  then there are four distinct fourth roots of $-1/3$ and a matrix $\MM'$ corresponding to $\psi$ has either the form \eqref{eq7:M_Rinf gamqodd} or the form \eqref{eq2_M} with
\begin{equation}\label{eq7:M_Rinf gamqodd_1_3}
  a =-b/d,   ~ b =\pm \sqrt{1/3},  ~ c = 1,   ~ d = \t{a fourth root of } -1/3.
\end{equation}
If $\MM'$ has the form \eqref{eq7:M_Rinf gamqodd} it has order $2$, otherwise it has order $3$.
\end{lemma}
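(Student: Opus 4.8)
The plan is to mimic the proof of Lemma~\ref{lem:infTOgam_q=0}, but retaining those terms of the general matrix \eqref{eq2_M} that vanish only when $q\equiv0\pmod3$. Let $\MM$ be a matrix of the form \eqref{eq2_M} representing $\psi$, with parameters $a,b,c,d$. Since $R_{\mu,\infty}=\Pf(1,0,1,0)$ is the unique point of $\ell_\mu$ with vanishing last coordinate and, by hypothesis, $R_{\mu,\infty}\psi=R_{\mu,\gamma}$ with $\gamma\in\F_q$, the last coordinate $c(c^2+3d^2)$ of $[1,0,1,0]\MM$ is nonzero; in particular $c\ne0$, so I may normalize $(a,b,c,d)$ so that $c=1$, and then this last coordinate equals $1+3d^2\ne0$. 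Because $\psi$ permutes the points of $\ell_\mu$ and maps its point at infinity to a finite one, there is a $\zeta\in\F_q$ with $R_{\mu,\zeta}\psi=R_{\mu,\infty}$. Expanding the two vector identities
\begin{equation*}
[1,0,1,0]\MM=(1+3d^2)\,[\gamma,\mu,\gamma,1],\qquad [\zeta,\mu,\zeta,1]\MM=\sigma\,[1,0,1,0],\quad\sigma\in\F_q^*,
\end{equation*}
componentwise, I obtain from the first identity the relation $a^3+3ab^2=a+ad^2+2bd$ (equating the $0$th and $2$nd coordinates) and $a^2+b^2+2abd=(1+3d^2)\mu$ (the $1$st coordinate); and from the second identity the relation $\zeta(1+3d^2)+d^3+3\mu d=0$ (the $3$rd coordinate, which determines $\zeta$), together with the vanishing of its $1$st coordinate and the equality of its $0$th and $2$nd coordinates.

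I would then split according to whether $a=0$. If $a=0$, the relation $a^3+3ab^2=a+ad^2+2bd$ gives $bd=0$; the case $b=0$ is impossible since it forces $ad-bc=0$, so $d=0$, and then $\MM$ reduces to the matrix \eqref{eq7:M_Rinf gamqodd}, the equation for the $1$st coordinate becomes $b^2=\mu$ (so $\mu$ is a square, $b=\pm\sqrt{\mu}$, $b\ne0$ as $\mu\ne0$), and all the remaining equations hold identically. For such $\MM$ one has $\MM^2=b^3I$, hence $P(\MM(\psi))^2=P$ for every point $P$; and evaluating $[\gamma,\mu,\gamma,1]\MM$ directly gives $R_{\mu,\infty}\leftrightarrow R_{\mu,0}$ and, for $\gamma\in\F_q^*$, $R_{\mu,\gamma}\mapsto R_{\mu,\gamma'}$ with $\gamma'=b^3/\gamma=\pm\sqrt{\mu^3}/\gamma$. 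In particular, if $\mu$ is not a square the branch $a=0$ is empty; since the branch $a\ne0$ forces $\mu=-1/3$ (which is a square whenever it occurs), $\psi$ cannot exist, proving the first assertion of the lemma.

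The heart of the matter is the case $a\ne0$. Here I would substitute the value of $\zeta$ from the $3$rd-coordinate relation, together with the identity $a^3+3ab^2=a+ad^2+2bd$, into the two remaining equations of the second identity, and then eliminate $b$ (for instance by means of the quadratic $3ab^2-2db+a(a^2-1-d^2)=0$ coming from that identity). I expect the elimination to yield a polynomial in $a$ and $d$ all of whose factors, except for those ruled out by the previously excluded degeneracies ($a=0$, $b=0$, $d=0$, $ad-bc=0$, $\mu=1$), force $3d^4+1=0$; back-substitution then gives $b^2=1/3$, $a=-b/d$ and, from $a^2+b^2+2abd=(1+3d^2)\mu$, the value $\mu=-1/3$, i.e. the matrix \eqref{eq7:M_Rinf gamqodd_1_3}. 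A solution with $a\ne0$ requires both a fourth root $d$ of $-1/3$ and a square root $b$ of $1/3$ in $\F_q$. The first forces $-1/3$ to be a fourth power, in particular a nonzero square, which by (the proof of) Lemma~\ref{lem1:equation} occurs only for $q\equiv1\pmod3$; given this, $1/3=-(d^2)^2$ is a square if and only if $-1$ is, i.e. if and only if $q\equiv1\pmod4$, so $b$ exists exactly then. Thus this branch is non-empty precisely when $q\equiv1\pmod{12}$ and $-1/3$ is a fourth power, in which case $-1/3$ has exactly four fourth roots; outside the exceptional case the branch $a\ne0$ is empty and only \eqref{eq7:M_Rinf gamqodd} survives.

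Finally, the orders claimed ($2$ for \eqref{eq7:M_Rinf gamqodd}, immediate from $\MM^2=b^3I$; $3$ for \eqref{eq7:M_Rinf gamqodd_1_3}) are verified by a short direct computation, as was done in Lemma~\ref{lem3:stabil0infqm1}, if convenient with the aid of Magma. I expect the main obstacle to be exactly this $a\ne0$ elimination --- keeping track of the spurious factors so that precisely the condition $3d^4+1=0$ remains --- and, secondarily, extracting the correct congruence conditions on $q$ for a fourth root of $-1/3$ (together with a square root of $1/3$) to exist in $\F_q$.
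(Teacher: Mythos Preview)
Your proposal is correct and follows essentially the same strategy as the paper: normalise $c=1$, write out the constraints from $R_{\mu,\infty}\psi\in\ell_\mu$ and $R_{\mu,\zeta}\psi=R_{\mu,\infty}$, and eliminate. The paper splits first on $\zeta$ (not on $a$) and, in the nontrivial branch, expresses $a$ \emph{linearly} in $b$ via the vanishing first coordinate of $[\zeta,\mu,\zeta,1]\MM$, then solves for $b^2$ rationally in $d,\mu$; the resulting factorisation isolates $\mu\in\{1,1/9,-1/3\}$ first and only afterwards yields $d^4=-1/3$, so your expectation of hitting $3d^4+1=0$ before $\mu=-1/3$ is reversed relative to the paper, but the endpoints coincide (and do remember to exclude $\mu=1/9$ alongside $\mu=1$, since the paper needs both).
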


\begin{proof}
We consider $\mu\in\F_q^*\setminus\{1,1/9\}$ due to Lemma \ref{lem4:EnGEllMu}. Let $\MM$ be given by \eqref{eq2_M}. By  hypothesis, there exist $\gamma,\zeta\in\F_q$ such that
$[1,0,1,0]\MM=[\gamma,\mu,\gamma,1]$ and $[\zeta, \mu ,\zeta ,1]\MM=[1,0,1,0]$.
This implies
\begin{align}\label{eq7:Im_mu_infty_1qodd}
&a^3 + 3ab^2 - ac^2 - ad^2 - 2bcd=0;\db\\
&\label{eq7:Im_mu_infty_2qodd}
a^2c + 2abd + b^2c-\mu c^3 - 3 \mu cd^2=0.\db\\
&\label{eq7:Im_mu_infty_3qodd}
3 \mu a^2b - 2 \mu acd -  \mu bc^2 + \zeta a^3  + 3\zeta ab^2  -\zeta ac^2  - \zeta ad^2  + b^3 -  2\zeta bcd  - bd^2=0;\db\\
&\label{eq7:Im_mu_infty_4qodd}
 \mu a^2d + 2 \mu abc + \zeta a^2c  + 2\zeta abd  + \zeta b^2c  + b^2d = 0;\db\\
&\label{eq7:Im_mu_infty_5qodd}
3 \mu c^2d + \zeta c^3  + 3\zeta cd^2  + d^3=0.
\end{align}

If $c = 0$ then, by \eqref{eq7:Im_mu_infty_5qodd},  $d^3=0$, so $ad-bc= 0$, contradiction. Therefore, 
, $c\ne0$. As $\MM$ is defined up to a factor of proportionality, we can put $c=1$.
\begin{description}
  \item[(a)] Let $\zeta = 0$.  By \eqref{eq7:Im_mu_infty_5qodd}, $d(3\mu  + d^2)=0$.
\begin{description}
  \item[(a1)] Let $d=0$. By \eqref{eq7:Im_mu_infty_4qodd}, $2\mu ab=0$.
\begin{description}
  \item[(a11)] Let $b=0$. Then $ad-bc= 0$, contradiction.
  \item[(a12)] Let $b\ne0$. Then $a=0$ and, by \eqref{eq7:Im_mu_infty_2qodd}, $ b^2-\mu=0$ that implies $\mu$ is a square and $b = \pm \sqrt{\mu}$.
  Then for $c=1$, $a=d=\zeta=0$, also \eqref{eq7:Im_mu_infty_1qodd} and \eqref{eq7:Im_mu_infty_3qodd} are satisfied and we obtain the matrix \eqref{eq7:M_Rinf gamqodd}.
\end{description}
  \item[(a2)] Let $d \neq 0$. Then, by \eqref{eq7:Im_mu_infty_5qodd}, $-3\mu$ must be a square and $d^2 = -3\mu$.
Equation \eqref{eq7:Im_mu_infty_4qodd} becomes  $\mu a^2d + 2 \mu ab   + b^2d=0$ that gives
 $a = -3b/d$ or $a=b/d$.
 \begin{description}
   \item[(a21)] Let $a=b/d$. Then $ad-bc= 0$, contradiction.
   \item[(a22)] Let  $a = -3b/d$. From  \eqref{eq7:Im_mu_infty_1qodd} we have
$-9 b(\mu-1) (\mu+3 b^2)/d^3=0.$
\begin{description}
  \item[(a221)] Let $b=0$. Then $a=0$ and  $ad-bc= 0$, contradiction.

  \item[(a222)] Let $b\ne0$. Then $-\mu/3$ must be a square and $b^2 = -\mu / 3$.  It follows that $a^2=1$ as $d^2 = -3\mu$. Also, $ ad=-3b$.
Then  \eqref{eq7:Im_mu_infty_3qodd} becomes $32\mu b/3 =0$. As $q$ is odd, this implies $\mu = 0$, contradiction.
\end{description}
 \end{description}
\end{description}
  \item[(b)] Let $\zeta \ne 0$.  By above, this means
$[0, \mu ,0 ,1]\MM=[\gamma',\mu,\gamma',1],~\gamma'\in\F_q$ where $\gamma' \neq \gamma$ with $[1,0,1,0]\MM=[\gamma,\mu,\gamma,1]$. What has been said entails
\begin{align}\label{eq7:Im_mu_infty_6qodd}
&3 \mu a^2b  + b^3      -2 \mu ad -  \mu b  - bd^2  =0;\db\\
&\label{eq7:Im_mu_infty_7qodd}
\mu a^2d + 2 \mu ab  + b^2d  -    3 \mu^2 d   - \mu d^3    =0.
\end{align}
\begin{description}
  \item[(b1)]
Let $d=0$. By \eqref{eq7:Im_mu_infty_5qodd}, $ \zeta=0$, contradiction.

  \item[(b2)] Let  $d \neq 0$. If $a=0$ then, by \eqref{eq7:Im_mu_infty_1qodd}, $-2bd=0$ that implies $b=0$, and $ad-bc=0$, contradiction. So, we should put $a \neq 0$. Now, if $b=0$ then, by \eqref{eq7:Im_mu_infty_6qodd}, $-2 \mu a d = 0$, contradiction. So, we should take also $b \neq 0$.

        By \eqref{eq7:Im_mu_infty_5qodd},
 $\zeta (1+3 d^2) = -3 \mu d - d^3$.
 \begin{description}
   \item[(b21)] Let $1+3 d^2=0$. Then $-3 \mu d - d^3=0$, $ d^2=-1/3$, $\mu = 1/9$, contradiction.
   \item[(b22)] Let $1+3 d^2\ne0$. Then

   \begin{align}\label{eq7:zeta b21}
    \zeta = (-3 \mu d - d^3)/(1   + 3 d^2). 
   \end{align}

Now, by
\eqref{eq7:Im_mu_infty_4qodd},
$(3\mu d^2 - 2\mu  - d^2)a= bd(2d^2 + 1- 3\mu)$.
If $3\mu d^2 - 2\mu  - d^2=0$, then $2d^2 + 1- 3\mu=0$, $\mu = (2d^2 + 1)/3$, and the coefficient of $a$ becomes $2(d - 1)(d + 1)(d^2 + 1/3)$. If $d = \pm 1$, then  $\mu  =1$, contradiction. As $ d^2 +1/3\ne0$ we have $3\mu d^2 - 2\mu  - d^2\ne0$ and
\begin{equation}\label{eq7:Im_mu_infty_val_a_qodd}
a=\frac{2bd^3 + bd- 3\mu bd}{3\mu d^2 - 2\mu  - d^2}.
\end{equation}
Substituting this value of $a$ in
\eqref{eq7:Im_mu_infty_2qodd} (taking into account $d^2 +1/3\ne0$)
 we obtain 
 \begin{align}\label{eq7:Im_mu_infty_eq02_long}
&   b^2 ( 3 \mu^2 d^2 - 4 \mu^2 -4 \mu d^4 + 6\mu d^2-   d^2)=-\mu  (3\mu (3 d^2 - 2) - d^2)^2;
\end{align}
If the coefficient of $b^2$ is $0$, then also
 $ \mu (3 d^2 - 2) - d^2=0$. If $3 d^2 - 2=0$, then $d^2=0$, contradiction; so  $ \mu  = d^2/(3 d^2 - 2)$. Substituting this value of $\mu$ in the coefficient of $b^2$ we obtain 
$d^2(d-1)(d+1)(d^2 + 1/3)/(d^2-2/3)^2 = 0,$
contradiction, as $d \neq 0$, $d^2 + 1/3 \neq 0$ and $d = \pm1$ implies $\mu = 1$.
So, the coefficient of $b^2$ cannot be $0$, and we obtain 
\begin{equation}\label{eq7:Im_mu_infty_val_b_qodd}
    b^2 =
\frac{-\mu  (3\mu d^2 - 2\mu - d^2)^2}{ 3 \mu^2 d^2 - 4 \mu^2 -4 \mu d^4 + 6\mu d^2-   d^2}.
\end{equation}

Substituting the value of $\zeta$ of \eqref{eq7:zeta b21} in \eqref{eq7:Im_mu_infty_3qodd} we have
\begin{equation}\label{eq:eq_4_2}
 (ad-b) (3\mu a^2 + 9\mu abd - 3\mu d^2 + mu - a^2 d^2 - abd - 3 b^2 d^2 - b^2 + d^4  + d^2)=0. 
\end{equation}
Recalling that $ad-b \neq 0$ and substituting the value of $a$ of \eqref{eq7:Im_mu_infty_val_a_qodd} in the second factor of \eqref{eq:eq_4_2} we obtain
\begin{align}\label{eq7:eq_4_3}
&(\mu (3 d^2 - 1) - d^2(d^2+1))(27 \mu^2 b^2 d^2 + 9\mu^2 d^4 - 12\mu^2 d^2\db\\
&+ 4 \mu^2 - 18 \mu b^2 d^2 - 4 \mu b^2 - 6\mu d^4 + 4 \mu d^2  - 4 b^2 d^4 - b^2 d^2 + d^4)=0.\notag
\end{align}
Suppose $\mu (3 d^2 - 1) - d^2(d^2+1)=0$.
If $3 d^2 -1 = 0$ then also $ d^2 (d^2+1)=0$. As $d \neq 0$ this implies $d^2 = -1$, so $-3-1=-4=0$, contradiction, as q is odd.
Therefore $\mu=   (d^4 + d^2)/(3d^2 - 1).$
Substituting this value of $\mu$ in \eqref{eq7:Im_mu_infty_val_b_qodd} we obtain $b^2 = (d^6 + d^4)/(3 d^2 - 1) = \mu d^2$. As $b \neq 0$, from \eqref{eq7:Im_mu_infty_val_a_qodd} we obtain $ab = (d^5 + d^3)/(3 d^2 - 1)$ and finally $abd-b^2=0$, contradiction.

Substituting the value of $b^2$ \eqref{eq7:Im_mu_infty_val_b_qodd} in the second factor of \eqref{eq7:eq_4_3} we obtain
\begin{equation}\label{eq7:eq_4_5}
 d^2 (\mu - 1) (\mu + 1/3) (\mu - 1/9) (3\mu d^2 - 2\mu - d^2)^ 2=0.\\
\end{equation}
Suppose $3\mu d^2 - 2\mu - d^2=0$. Then, by \eqref{eq7:Im_mu_infty_val_b_qodd}, $b^2 =0$, contradiction. 

Suppose $\mu = - 1/3$.
Then by  \eqref{eq7:Im_mu_infty_val_a_qodd},  \eqref{eq7:Im_mu_infty_val_b_qodd},
\begin{align}\label{eq7:Im_mu_infty_val_zeta_a_b_mu1_3}
&a = (3bd^3+3bd)/(1-3d^2);\\
&b^2 = (3d^2-1)^2/(9d^4-18d^2-3).\notag
\end{align}
Substituting this value of $a$ in  \eqref{eq7:Im_mu_infty_1qodd} we obtain

\begin{equation}
bd (d^2 - 1/3)^ 2 (d^2 + 1/3) (d^4 + 1/3)=0.
\end{equation}
By the constraints above, $d^4 =- 1/3$, so $- 1/3$ must be a fourth power. Hence, by  \eqref{eq7:Im_mu_infty_val_zeta_a_b_mu1_3}, $b^2 = 1/3$, so $1/3$ must be a square.
Then  $a= (d(3bd^3+3bd))/(d(1-3d^2))= (b(-1+3d^2))/(d(1-3d^2))=-b/d$ and $ad-b=-2b \neq 0$.
As $\mu$ is a square and $\mu = - 1/3$, by \cite[Section 1.5(xi)]{Hirs_PGFF}  $q \equiv1\pmod3$, so as $1/3$ is also a square, $-1$ must be a square that,
by \cite[Section 1.5(ix)]{Hirs_PGFF}, happens if and only if  $q  \equiv1\pmod4$. The two conditions implies  $q \equiv1\pmod {12}$.
Then, by \cite[Section 1.5(v)]{Hirs_PGFF}, if $- 1/3$ is a fourth power the equation $d^4 =- 1/3$ has four distinct roots. 

The other factors of \eqref{eq7:eq_4_5} cannot be zero and this completes the proof about the structure of the matrix  $\MM(\psi)$.\\
The remaining assertions follow by direct computation using Magma.\qedhere
 \end{description}
 \end{description}
\end{description}
\end{proof}

\begin{remark}
  Note that in Lemma \ref{lem7:infTOgamqodd} the condition 
$q \equiv1\pmod {12}$ and $-1/3$ is a fourth power
is not empty; the examples are  $q = 37, 49, 61,121, 157, 169, 193, 313, \ldots$ 
\end{remark}

\begin{theorem}\label{th7:stab_l_mu_qodd}
Let $q \not \equiv0\pmod3$. Let the $\EnG$-line $\ell_\mu$ be as in \eqref{eq2:ellmu} with $\mu\in\F_q^*\setminus\{1,1/9\}$. Let $G_q^{\ell_\mu}$ be the subgroup  of $G_q$ fixing  $\ell_\mu$ and let $\Os_\mu$ be the orbit of $\ell_\mu$ under $G_q$. Then 
$\#G_q^{\ell_\mu}$, $\#\Os_\mu$  and the structure of $G_q^{\ell_\mu}$ are as follows:
\begin{description}
  \item[(i) ] Let $\mu$ be a non-square in $\F_q$. Then  $\#G_q^{\ell_\mu}=2$, $\#\Os_\mu=(q^3-q)/2$. The
elements of $G_q^{\ell_\mu}$ have the matrix form \eqref{eq:M_Rinfodd}.

  \item[(ii)] Let $\mu$ be a square in $\F_q$ and  either $\mu \neq -1/3$, or  $q \not\equiv1\pmod {12}$, or $-1/3$ is not a fourth power. Then $G_q^{\ell_\mu}$
  is isomorphic to $C_2 \times C_2$ and $\#\Os_\mu=(q^3-q)/4$. Two
elements of $G_q^{\ell_\mu}$ have the matrix form \eqref{eq:M_Rinfodd} and the other two ones are given by \eqref{eq7:M_Rinf gamqodd}.

  \item[(iii)]
  If $\mu = -1/3$ and  $q \equiv1\pmod {12}$ and $-1/3$ is a fourth power, then  $\mu$ is a square in $\F_q$, $G_q^{\ell_\mu}$
  is isomorphic to $A_4$ and $\#\Os_\mu=(q^3-q)/12$. Two
elements of $G_q^{\ell_\mu}$ have the matrix form \eqref{eq:M_Rinfodd},   two other ones are given by \eqref{eq7:M_Rinf gamqodd}, and the last eight elements have the matrix form \eqref{eq7:M_Rinf gamqodd_1_3}.
\end{description}
\end{theorem}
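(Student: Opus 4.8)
The statement of Theorem \ref{th7:stab_l_mu_qodd} is a direct structural consequence of the case analysis already carried out in Lemmas \ref{lemS5:P0notfixedodd} and \ref{lem7:infTOgamqodd}, together with the orbit-stabilizer count \cite[Lemma 2.44(ii)]{Hirs_PGFF}. The plan is to organize the argument around the single dichotomy that controls everything: given $\psi\in G_q^{\ell_\mu}$, either $R_{\mu,\infty}\psi=R_{\mu,\infty}$ or $R_{\mu,\infty}\psi=R_{\mu,\gamma}$ for some $\gamma\in\F_q$ (these are the only possibilities, since $\psi$ permutes the points of $\ell_\mu$ and by \eqref{eq2:R_in notin} the point $R_{\mu,\infty}$ is the unique point of $\ell_\mu$ lying on $\pi_\t{osc}(\infty)$, which is preserved or not accordingly — more directly, one just splits on whether $\psi$ fixes $R_{\mu,\infty}$).

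First I would recall that the ``fixing'' branch was completely resolved in Lemma \ref{lemS5:P0notfixedodd}: any $\psi\in G_q^{\ell_\mu}$ fixing $R_{\mu,\infty}$ has matrix \eqref{eq:M_Rinfodd} with $d\in\{1,-1\}$, so this branch contributes exactly two elements (the identity and one involution), regardless of $\mu$. Then I would invoke Lemma \ref{lem7:infTOgamqodd} for the ``moving'' branch $R_{\mu,\infty}\psi=R_{\mu,\gamma}$: if $\mu$ is a non-square there are no such $\psi$, giving $\#G_q^{\ell_\mu}=2$ and case (i); if $\mu$ is a square and we are not in the exceptional situation, the moving branch contributes exactly two further elements, of matrix form \eqref{eq7:M_Rinf gamqodd} (one for each square root $b=\pm\sqrt{\mu}$), each of order $2$, giving $\#G_q^{\ell_\mu}=4$; and in the exceptional case $\mu=-1/3$, $q\equiv1\pmod{12}$, $-1/3$ a fourth power, the moving branch contributes the two matrices \eqref{eq7:M_Rinf gamqodd} of order $2$ plus the eight matrices \eqref{eq7:M_Rinf gamqodd_1_3} of order $3$, giving $\#G_q^{\ell_\mu}=12$.

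To pin down the group \emph{structure} rather than just the order, I would argue as follows. In case (ii) the group has order $4$ and, by the above, contains at least two distinct involutions (the one from \eqref{eq:M_Rinfodd} with $d=-1$ and the two from \eqref{eq7:M_Rinf gamqodd}); a group of order $4$ with more than one involution is not cyclic, hence $G_q^{\ell_\mu}\cong C_2\times C_2$. In case (iii) the group has order $12$ with at least three elements of order $2$ (the involutions just listed) and eight elements of order $3$ (from \eqref{eq7:M_Rinf gamqodd_1_3}), and — exactly as in the proof of Lemma \ref{lem3:stabil0infqm1}(ii) — the only group of order $12$ with three elements of order $2$ and eight of order $3$ is $A_4$ \cite{groupbook}. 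Finally the orbit lengths $\#\Os_\mu=\#G_q/\#G_q^{\ell_\mu}=(q^3-q)/\#G_q^{\ell_\mu}$ follow from $\#G_q=q^3-q$ and \cite[Lemma 2.44(ii)]{Hirs_PGFF}, noting that $\ell_\mu$ is indeed an $\EnG$-line here by Lemma \ref{lem4:EnGEllMu}(iii) since $\mu\ne1/9$.

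The routine verifications (that the listed matrices really do stabilize $\ell_\mu$, and their orders) are already asserted in Lemmas \ref{lemS5:P0notfixedodd} and \ref{lem7:infTOgamqodd} via direct computation, so no real obstacle remains at the level of this theorem; the only point requiring a little care is the bookkeeping that the two branches are disjoint and jointly exhaust $G_q^{\ell_\mu}$, and that in case (iii) the eight order-$3$ matrices \eqref{eq7:M_Rinf gamqodd_1_3} are genuinely distinct as projectivities (four choices of fourth root of $-1/3$ times the two signs of $b$, with $a=-b/d$ and $c=1$ forced), which one checks by comparing the matrices \eqref{eq2_M} entrywise up to scalar. I expect the identification $G_q^{\ell_\mu}\cong A_4$ in case (iii) to be the most delicate part, precisely because it relies on the exact count of elements by order rather than on an explicit presentation, so one must be sure the ten moving-branch matrices are pairwise distinct and distinct from the two fixing-branch matrices.
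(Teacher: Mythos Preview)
Your proposal is correct and follows essentially the same approach as the paper: the paper's proof simply states that the assertions follow from Lemmas~\ref{lemS5:P0notfixedodd} and~\ref{lem7:infTOgamqodd} together with the orbit--stabilizer count \cite[Lemma 2.44(ii)]{Hirs_PGFF}, and then identifies the order-$12$ group as $A_4$ via the element-order profile exactly as you do. Your write-up is more explicit about the fixing/moving dichotomy and actually justifies the $C_2\times C_2$ identification in case~(ii), which the paper leaves implicit, but the underlying argument is the same.
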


\begin{proof}
  The assertions follow from Lemmas \ref{lemS5:P0notfixedodd} , \ref{lem7:infTOgamqodd}, and \cite[Lemma 2.44(ii)]{Hirs_PGFF}. 
Finally, the only group of order $12$ having  three elements of order two and eight elements of order three is $A_4$, see \cite{groupbook}.
\end{proof}

\begin{lemma}\label{lem7:l0inf_equiv_l_mu}
Let $q$ be odd,  $q\equiv  -1 \pmod3$, $\mu$ be not a square. The line $\Lc$ and a line $\ell_\mu$ belong to the same orbit of $G_q$ if and only if $\mu= -1/3$,  $q\equiv  -1 \pmod{12}$.
\end{lemma}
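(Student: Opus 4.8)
The plan is first to reduce the biconditional to a single question about projectivities. By Theorem~\ref{th3:main}(iv) we have $\#\Os_\Lc=(q^3-q)/2$, and by Theorem~\ref{th7:stab_l_mu_qodd}(i), since $\mu$ is a non-square, also $\#\Os_\mu=(q^3-q)/2$. Hence $\Os_\Lc=\Os_\mu$ if and only if $\ell_\mu\in\Os_\Lc$, i.e.\ if and only if there is a $\psi\in G_q$ with $\Lc\psi=\ell_\mu$; so the statement reduces to deciding for which pairs $(q,\mu)$ such a $\psi$ exists.

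So let $\psi\in G_q$ with $\Lc\psi=\ell_\mu$ and let $\MM=\MM(\psi)$ be of the form \eqref{eq2_M}. The set of points with $x_0=x_2$, $x_1=\mu x_3$ is exactly $\ell_\mu$ (it is a line containing all the points \eqref{eq2:R}), so $\Lc\psi=\ell_\mu$ is equivalent to asking that both images $Q_\infty\psi=[0,0,1,0]\MM$ and $Q_0\psi=[1,0,0,1]\MM$ of the two points \eqref{eq3:ell0infdef} spanning $\Lc$ satisfy those two equations. Using $q\not\equiv0\pmod3$ (so $3\ne0$), this gives four polynomial relations among $a,b,c,d,\mu$; two of them already force $c\ne0$ and $d\ne0$, so we may normalize $c=1$ and are left with a system in $a,b,d,\mu$.

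The next step is to solve this system. The two relations coming from $Q_\infty\psi\in\ell_\mu$ yield, with $r:=b/d$,
\[
a=\frac{2bd}{3b^2-d^2},\qquad \mu=\frac{b^2(b^2+d^2)}{d^2(3b^2-d^2)}=\frac{r^2(r^2+1)}{3r^2-1},
\]
the side cases $3b^2=d^2$, $b=0$, $r^2=\mu$ being discarded by short arguments (they force $ad-bc=0$ or $\mu=1$). Substituting these formulas into the two relations coming from $Q_0\psi\in\ell_\mu$ and clearing denominators expresses first $d^2$, and then $d$ itself, as rational functions of $r$; equating the square of the second expression with the first leaves a single polynomial equation in $r$. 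The core of the argument is to show that, after discarding the branches corresponding to the already excluded values $\mu\in\{0,1,1/9\}$ (or to $ad-bc=0$), the only possibility left is
\[
3r^4+6r^2-1=0,\qquad\text{equivalently}\qquad \mu=-\tfrac13 .
\]
Note that $-\tfrac13$ is a non-square precisely when $q\equiv-1\pmod3$ (multiply by the square $9$ and recall that $-3$ is a non-square exactly then), so the hypothesis that $\mu$ is a non-square is automatic in this conclusion and imposes nothing new.

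It remains to see for which $q$ this is solvable in $\F_q$. Writing $s=r^2$, the equation $3s^2+6s-1=0$ has its roots $s=-1\pm\tfrac{2}{\sqrt3}$ in $\F_q$ if and only if $\sqrt3\in\F_q$, which for $q\equiv-1\pmod3$, by quadratic reciprocity, happens if and only if $q\equiv-1\pmod4$, i.e.\ $q\equiv-1\pmod{12}$. When this holds the two candidate values of $r^2$ have product $-\tfrac13$, a non-square, so exactly one of them is a square: hence $r$ exists in $\F_q$, and back-substitution produces, up to the $2$-element freedom matching $\#G_q^\Lc=2$, an explicit $\psi$ with $\Lc\psi=\ell_{-1/3}$. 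This gives the ``if'' direction, and together with the uniqueness $\mu=-\tfrac13$ above, the ``only if'' direction. (Necessity of $q\equiv-1\pmod{12}$ can be cross-checked group-theoretically: $\psi$ would conjugate the non-trivial element of $G_q^\Lc$ onto that of $G_q^{\ell_\mu}$, so the involutions described in Lemma~\ref{lem3:stabil0infqm1}(i) and Theorem~\ref{th7:stab_l_mu_qodd}(i) would be conjugate in $G_q\cong\mathrm{PGL}(2,q)$, and comparing how each acts on $\C$ shows this forces $3$, hence $12$, to be a square in $\F_q$.) The main obstacle is the polynomial bookkeeping in the solving step: after substituting the expressions for $a$ and $\mu$ into the two $Q_0$-relations and eliminating $d$, the univariate equation in $r$ carries several spurious factors, and one must verify that each of them corresponds to an already excluded value of $\mu$ (or to $ad-bc=0$), while carefully controlling the denominators $3b^2-d^2$, $3r^2-1$, $r^2-\mu$ so that no solution is created or lost, so that $\mu=-\tfrac13$ is genuinely forced.
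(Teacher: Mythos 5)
Your approach is the same as the paper's: take a matrix $\MM$ of the form \eqref{eq2_M} sending $\Lc$ to $\ell_\mu$, derive polynomial relations in $a,b,c,d,\mu$, eliminate down to a univariate equation, and read off $\mu=-1/3$ together with the solvability condition $q\equiv-1\pmod{12}$. Your formulation via the line equations $x_0=x_2$, $x_1=\mu x_3$ of \eqref{eq2:ellmueq} is a mild streamlining (it absorbs the paper's case distinctions on where $Q_\infty$ and $Q_0$ land into two uniform pairs of equations), the normalization $c=1$ versus the paper's $d=1$ is immaterial, and your endgame --- $3s^2+6s-1=0$ is solvable iff $3$ is a square iff $q\equiv-1\pmod{12}$, with exactly one root a square because their product is $-1/3$ --- is exactly the paper's argument, as is the remark that $\mu=-1/3$ is automatically a non-square here.

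The gap is that the decisive step is announced rather than performed. The entire content of the ``only if'' direction is that, after substituting $a=2bd/(3b^2-d^2)$ and $\mu=r^2(r^2+1)/(3r^2-1)$ into the two $Q_0$-relations and eliminating $d$, the only surviving factor (beyond those forcing $\mu\in\{0,1,1/9\}$ or $ad-bc=0$) is $3r^4+6r^2-1$; you label this ``the core of the argument'' and then defer it as ``polynomial bookkeeping''. Moreover, the mechanics you describe do not match what the equations actually give: with $c=1$ and $b=rd$ one finds $a=2r/(3r^2-1)$ depends on $r$ alone, and the two $Q_0$-relations each express $d^3$ --- not $d^2$ and then $d$ --- as a rational function of $r$, namely $d^3(r^3-r)=a-a^3$ and $d^3(r^2-\mu)=\mu-a^2$, so the elimination is ``equate two expressions for $d^3$'', not ``square one and compare with the other''. (This observation also matters for the ``if'' direction: since $q\equiv-1\pmod3$, every element of $\F_q$ has a unique cube root, so once $r$ is found $d$ exists automatically; as written you never explain how $d$ is recovered.) In short, the proposal is a correct and well-aimed plan, but the identity that actually forces $\mu=-1/3$ --- in the paper's normalization, $b^5(b-1)^2(b+1)^2(3b^4+6b^2-1)=0$ after substituting $a^3=4b^3/(9b^2-1)$ --- still has to be computed and its spurious factors excluded before the lemma is proved.
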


\begin{proof}
Let $\psi\in G_q$, $\Lc \psi = \ell_\mu$ and let $\MM(\psi)$ be a matrix corresponding to $\psi$. We find it as is a version of $\MM$ of \eqref{eq2_M}. By \eqref{eq2_M}, we have
\begin{equation}\label{eq7:inf_eq1}
[0,0,1,0] \MM=[3ab^2,b^2c+2abd,ad^2+2bcd,3cd^2].
\end{equation}
\begin{description}
  \item[(a)] Let $\Pf(0,0,1,0) \psi = \Pf(1,0,1,0)$. Then $ad^2+2bcd \neq 0$, $3cd^2=0$, $b^2c+2abd=0$  from which we sequently obtain $d \neq 0$, $c=0$, $ab=0$. But $3ab^2 \neq 0$, contradiction.

  \item[(b)] Let $\Pf(0,0,1,0) \psi = \Pf(0,\mu,0,1)$. Then $b^2c+2abd \neq 0$, $ 3ab^2=0$, $ad^2+2bcd=0$ from which we sequently obtain $b \neq 0$, $a=0$, $cd=0$. But $3cd^2 \neq 0$, contradiction.

  \item[(c)] Let
  $\Pf(0,0,1,0) \psi = \Pf(\zeta,\mu,\zeta,1)$, $\zeta \in \F^*_q$. This implies
\begin{align}\label{eq3:inf_eq2}
&3ab^2 -ad^2-2bcd =0;\db\\
&\label{eq3:inf_eq3}
b^2c+2abd -  3 \mu cd^2=0;\db\\
&\label{eq3:inf_eq4}
3ab^2, 3cd^2 \neq 0.
\end{align}
By \eqref{eq3:inf_eq4}, $a,b,c,d \neq 0$. As $\MM$ is defined up to a factor of proportionality, we can put $d=1$. Then by  \eqref{eq3:inf_eq2}
\begin{equation}\label{eq3:inf_eq5}
c= (3ab^2 -a)/2b.
\end{equation}
Substituting this value of $c$ in \eqref{eq3:inf_eq3} we obtain:
$3a(-3\mu b^2 + \mu + b^4 + b^2)/2b=0$ which implies $\mu(3b^2-1)= b^4 + b^2$. Suppose $3b^2-1=0$; then  $b^4 + b^2=4/9=0$, contradiction as $q$ is odd. Therefore $b^2 \neq 1/3$ and
\begin{equation}\label{eq3:inf_eq6}
\mu =( b^4 + b^2)/(3b^2-1).
\end{equation}
If $b= \pm 1$, then $\mu = 1$, contradiction. Therefore in the following we assume $b \neq \pm 1$. By $d=1$ and \eqref{eq3:inf_eq5} we have:
\begin{align}\label{eq3:inf_eq7}
 & [1,0,0,1] \MM =
[8a^3b^3 + 8b^6,~12a^3b^4 - 4a^3b^2 + 8b^5,\db\\
&18a^3b^5 - 12a^3b^3 + 2a^3b + 8b^4,~
27a^3b^6 - 27a^3b^4 + 9a^3b^2 - a^3 + 8b^3].\notag
\end{align}
\begin{description}
  \item[(c1)] Let $\Pf(1,0,0,1) \psi = \Pf(1,0,1,0)$. Then, by \eqref{eq3:inf_eq7},
\begin{align}\label{eq3:inf_p2_eq1}
&2b(b-1)(b+1)(9a^3b^2 - a^3 - 4b^3)=0;\db\\
&\label{eq3:inf_p2_eq2}
 4b^2(3a^3b^2 - a^3 + 2b^3)=0;\db\\
&\label{eq3:inf_p2_eq3}
27a^3b^6 - 27a^3b^4 + 9a^3b^2 - a^3 + 8b^3= 0.
\end{align}
From \eqref{eq3:inf_p2_eq1}, $a^3(9b^2 - 1)= 4b^3$. If $9b^2 - 1=0$ then
$4b^3 =0$, contradiction as $q$ is odd and $b \neq 0$, so $a^3= 4b^3/(9b^2 - 1)$.
From \eqref{eq3:inf_p2_eq2}, substituting the value of $a^3$, we obtain
 $6b^3(5b^2 - 1)/(9b^2 - 1)=0$. If $q\equiv  0 \pmod5$ then $-1=0$, contradiction. Suppose $q \not \equiv  0 \pmod5$; then we obtain
  $b^2 = 1/5.$ Then $a^3=b$. Substituting the values of $a^3$ and $b^2$ in \eqref{eq3:inf_p2_eq3}, we obtain:
 $192b/125 =0$. As $192=2^6 3$, this implies $b=0$, contradiction.

  \item[(c2)] Let $\Pf(1,0,0,1) \psi = \Pf(\zeta',\mu,\zeta',1)$, $\zeta' \in \F, \zeta' \neq \zeta$. This implies
\begin{align}\label{eq3:inf_eq8}
&2b(b-1)(b+1)(9a^3b^2 - a^3 - 4b^3)=0;\db\\
&\label{eq3:inf_eq9}
-27a^3b^{10} + 54a^3b^6 - 32a^3b^4 + 5a^3b^2 + 16b^7 - 16b^5 =0;
\end{align}
Equation \eqref{eq3:inf_eq8} is the same as \eqref{eq3:inf_p2_eq1}, so reasoning as above we obtain $b^2 \neq 1/9$ and $a^3= 4b^3/(9b^2 - 1)$.
Substituting the value of $a^3$ in \eqref{eq3:inf_eq9}, we obtain $b^5(b-1)^2(b+1)^2(3b^4 + 6b^2 - 1)$, that implies
\begin{equation}\label{eq3:inf_eq10}
3b^4 + 6b^2 - 1=0.
\end{equation}
The excluded values $b = 0, \pm 1, 1/3$ and $b^2 =1/3, 1/9$ are not roots of \eqref{eq3:inf_eq10}. Equation \eqref{eq3:inf_eq10} is equianharmonic, as its $I$ and $\Delta$ invariants are $0$ and not $0$, respectively; see \cite[Section 1.11]{Hirs_PGFF}. Therefore, by \cite[Theorem 1.42]{Hirs_PGFF}, as  $q\equiv  -1 \pmod3$, equation \eqref{eq3:inf_eq10} has $0$ or $2$ roots.

Let $y=b^2$. The equation $3y^2 + 6y - 1=0$ has solution in $\F_q$ if and only if $6^2+12=16 \cdot 3$ is a square in $\F_q$. By \cite[Section 1.5]{Hirs_PGFF}, if $q$ is odd,  $q\equiv  -1 \pmod3$, $3$ is a square if and only if  $q\equiv  -1 \pmod{12}$. In this case, we obtain $b^2=(-3 \pm 2 \sqrt{3})/3$.

Exactly one of the two values of $b^2$ is a square in $\F_q$. In fact, if both the values are squares or not squares, $[(-3 - 2 \sqrt{3})/3 ] \cdot [(-3 + 2 \sqrt{3})/3 ]=-1/3$ would be a square, contradiction as $3$ is a square and $-1$ is not a square.
Therefore, equation \eqref{eq3:inf_eq10} has exactly $2$ solutions in $\F_q$.
Anyway, substituting both the possible values of $b^2$ in \eqref{eq3:inf_eq6},
we obtain the same value $\mu = -1/3$.

If $ad-bc=0$, then $a = (3ab^2-a)/2$. As $a  \neq 0$, this implies $b^2=1$, contradiction as $b \neq \pm1$. \qedhere
\end{description}
\end{description}
\end{proof}

\begin{lemma}\label{lem7:l0inf_equiv_l_mu_2}
Let $q\equiv  1 \pmod{12}$, $\mu= -1/3$, $-1/2$ be a cube and $-1/3$ be a fourth power. Then $\mu$ is a square and the lines $\Lc$ and $\ell_{-1/3}$ belong to the same orbit of $G_q$.
\end{lemma}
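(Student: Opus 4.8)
The claim has two parts. That $\mu=-1/3$ is a square is immediate: being a fourth power by hypothesis it is in particular a square (alternatively, $q\equiv1\pmod{12}$ makes $-1$ and $-3$ squares by \cite[Section 1.5(ix)]{Hirs_PGFF} and Lemma~\ref{lem1:equation}, hence so are $3=(-1)(-3)$ and $-1/3$). For the second part it suffices to exhibit one projectivity $\psi\in G_q$ with $\Lc\psi=\ell_{-1/3}$. My plan is to re-use the algebraic relations among the entries of the matrix built in case (c), subcase (c2), of the proof of Lemma~\ref{lem7:l0inf_equiv_l_mu}, and merely to check that, under the present hypotheses, the entries may be chosen in $\F_q$ itself --- the only point not automatic once $q\equiv1\pmod3$. (This is also the expected outcome: by Theorems~\ref{th3:main} and \ref{th7:stab_l_mu_qodd} the orbits $\Os_\Lc$ and $\Os_{-1/3}$ have the same size $(q^3-q)/12$ precisely under these hypotheses.)

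Recall the construction. Taking a candidate matrix in the form \eqref{eq2_M}, normalising $d=1$, and imposing that the two points $\Pf(0,0,1,0)$ and $\Pf(1,0,0,1)$ spanning $\Lc$ go to points of $\ell_\mu$ forces, exactly as in subcase (c2) of Lemma~\ref{lem7:l0inf_equiv_l_mu}, the relations $c=a(3b^2-1)/(2b)$, $\mu=(b^4+b^2)/(3b^2-1)$, $a^3=4b^3/(9b^2-1)$ (so $9b^2-1\ne0$), together with $3b^4+6b^2-1=0$; there it is also checked that the resulting $\mu$ equals $-1/3$ and that $ad-bc=3a(1-b^2)/2\ne0$ since $b^2\ne1$. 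So the whole proof reduces to two solvability statements over $\F_q$: (i) $3y^2+6y-1=0$ has a root $y$ that is a square in $\F_q$ (giving $b=\sqrt{y}\in\F_q$), and (ii) for such $b$ the element $4b^3/(9b^2-1)$ is a cube in $\F_q$ (giving $a\in\F_q$).

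For (i): the discriminant of $3y^2+6y-1$ equals $48=16\cdot3$, a square since $3$ is; completing the square gives the roots $-1\pm2/\sqrt3$, and using $2-\sqrt3=(\sqrt3-1)^2/2$ one root is $y_1=(2-\sqrt3)/\sqrt3=(\sqrt3-1)^2/(2\sqrt3)$. Since $\sqrt3\ne\pm1$, $y_1$ is a square in $\F_q$ if and only if $2\sqrt3$ is, and because $(q-1)/2$ is even one has $(2\sqrt3)^{(q-1)/2}=2^{(q-1)/2}3^{(q-1)/4}$; the hypothesis that $-1/3$ is a fourth power gives $3^{(q-1)/4}=(-1)^{(q-1)/4}$, so $y_1$ is a square iff $2^{(q-1)/2}=(-1)^{(q-1)/4}$, which holds in both classes $q\equiv1\pmod8$ and $q\equiv5\pmod8$ using that $2$ is a square exactly when $q\equiv\pm1\pmod8$ (see \cite[Section 1.5]{Hirs_PGFF}). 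Thus $b:=\sqrt{y_1}\in\F_q$. For (ii): with $b^2=y_1$ a short computation gives $4b^3/(9b^2-1)=b^3(5+3\sqrt3)$, and the identity $(1+\sqrt3)^3=2(5+3\sqrt3)$ yields $4b^3/(9b^2-1)=b^3(1+\sqrt3)^3/2$; since $q\equiv1\pmod6$ makes $-1$ a cube, this is a cube iff $-1/2$ is a cube, which holds by hypothesis, so $a:=b(1+\sqrt3)\sqrt[3]{1/2}\in\F_q$.

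Assembling $a$, $b$, $c=a(3b^2-1)/(2b)$, $d=1$ (all nonzero, with $ad-bc\ne0$), the matrix \eqref{eq2_M} represents a projectivity $\psi\in G_q$, and substituting $b^2=y_1$ gives $\mu=(b^4+b^2)/(3b^2-1)=-1/3$. By construction $\Pf(0,0,1,0)\psi$ and $\Pf(1,0,0,1)\psi$ are two distinct points of $\ell_{-1/3}$, hence $\Lc\psi=\ell_{-1/3}$, which proves the lemma. The step I expect to be the real obstacle is (i): spotting the rewriting $2-\sqrt3=(\sqrt3-1)^2/2$ and then turning ``$-1/3$ is a fourth power'' into the quadratic-residue relation $2^{(q-1)/2}=(-1)^{(q-1)/4}$ via Euler's criterion modulo $8$; the parallel bookkeeping in (ii) through $(1+\sqrt3)^3=2(5+3\sqrt3)$ is exactly what explains why the cube hypothesis is stated for $-1/2$ rather than for $2$.
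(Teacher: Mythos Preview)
Your argument is correct, and the overall reduction is the same as the paper's: both normalise $d=1$, force $c=a(3b^2-1)/(2b)$, $\mu=(b^4+b^2)/(3b^2-1)$, $a^3=4b^3/(9b^2-1)$, and arrive at the quartic $3b^4+6b^2-1=0$, then check $ad-bc\ne0$ via $b^2\ne1$. Where you diverge is in showing that suitable $b$ and $a$ exist in $\F_q$. The paper factors the quartic and the cubic explicitly over $\F_q$ by introducing $\alpha=\sqrt[4]{-1/3}$, $\beta=\sqrt3$, $\gamma=\sqrt[3]{-1/2}$ and simply writing down a root $b=((\alpha^3+\alpha)\beta+3\alpha^3+\alpha)/2$ and then $a=\gamma\alpha^3\beta+\gamma\alpha$; you instead prove solvability by number-theoretic means, via the identities $2-\sqrt3=(\sqrt3-1)^2/2$ and $(1+\sqrt3)^3=2(5+3\sqrt3)$ together with an Euler-criterion case split modulo $8$. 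Your route is tidier and, more usefully, makes transparent \emph{why} the hypotheses are ``$-1/3$ a fourth power'' and ``$-1/2$ a cube'': the first is exactly what forces $2\sqrt3$ to be a square (so that one root of $3y^2+6y-1$ is itself a square), and the second is exactly what makes $b^3(1+\sqrt3)^3/2$ a cube. The paper's approach, by contrast, gives a fully explicit witness, which is convenient if one wants to write down the matrix. One small point you glide over: your parenthetical ``all nonzero'' should include the check $c\ne0$, i.e.\ $3b^2-1\ne0$; this holds since $b^2=1/3$ would give $3b^4+6b^2-1=4/3\ne0$.
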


\begin{proof}
Let $\psi\in G_q$, $\Lc \psi = \ell_{-1/3}$ and let $\MM(\psi)$ be a matrix corresponding to $\psi$.  We find it as is a version of $\MM$ of \eqref{eq2_M}. By \eqref{eq2_M} we have \eqref{eq7:inf_eq1}.
Reasoning  as in Lemma~\ref{lem7:l0inf_equiv_l_mu}, we obtain $\Pf(0,0,1,0) \psi = \Pf(\zeta,-1/3,\zeta,1), \zeta \in \F^* $,
that implies
\begin{align}\label{eq7:inf_eq2}
&3ab^2 -ad^2-2bcd =0;\db\\
&\label{eq7:inf_eq3}b^2c+2abd + cd^2=0;\db\\
&\label{eq7:inf_eq4}3ab^2, 3cd^2 \neq 0.
\end{align}
By \eqref{eq7:inf_eq4}, $a,b,c,d \neq 0$. As $\MM$ is defined up to a factor of proportionality, we can put $d=1$. Then by  \eqref{eq7:inf_eq2},
$c= (3ab^2 -a)/2b.$
Substituting this value of $c$ in \eqref{eq7:inf_eq3} we obtain 
$a( 3b^4 + 6 b^2-1)=0$ which implies
\begin{equation}\label{eq7:inf_eq6}
3b^4 + 6 b^2-1=0.
\end{equation}
As $q$ is odd, the values $\pm 1, \pm 1/3$, are not roots of the equation \eqref{eq7:inf_eq6}, so in the following we assume $b \neq \pm 1, \pm 1/3$.

Reasoning  as in Lemma \ref{lem7:l0inf_equiv_l_mu}, we obtain
 $\Pf(1,0,0,1) \psi = \Pf(\zeta',\mu,\zeta',1)$, $\zeta' \in \F, \zeta' \neq \zeta$, and
 \begin{equation}\label{eq7:value_a_3}
a^3= 4b^3/(9b^2 - 1).
\end{equation}
 As $q\equiv  1 \pmod{12}$,  $-3$ and $-1$ are squares in $\F_q$ by \cite[Section 1.5 ]{Hirs_PGFF} . Therefore also $3$ is a square.
 Let $\alpha$ be a fourth root of $-1/3$ and 
 $\beta$ be a square root of $3$ and
 $\gamma$ be a cubic root of $-1/2$. Then Equation \eqref{eq7:inf_eq6} can be factorized in the following way:
\begin{equation*}
\begin{gathered}
 (2b + (-\alpha^3 - \alpha)\beta  -3\alpha^3 - \alpha) (2b + (\alpha^3 - \alpha)\beta + 3\alpha^3 + \alpha)\\
 (2b + (\alpha^3 + \alpha)\beta -3\alpha^3 - \alpha) (2b + (\alpha^3 + \alpha)\beta + 3\alpha^3 + \alpha)=0.
\end{gathered}
\end{equation*}
Choosing, for example, 
\begin{equation*}
b=((\alpha^3 + \alpha) \beta + 3 \alpha^3 +  \alpha)/2,
\end{equation*}
Equation \eqref{eq7:value_a_3} can be factorized in the following way:
\begin{equation*}
\begin{gathered}  
    (2a + (\gamma \alpha^3 - \gamma \alpha) \beta + 3 \gamma \alpha^3 + \gamma \alpha) (2a + (\gamma\alpha^3+ \gamma\alpha) \beta  -3\gamma \alpha^3 + \gamma\alpha)\\
     (a - \gamma \alpha^3 \beta - \gamma \alpha)=0
\end{gathered}
\end{equation*}
and we can choose, for example, $a = \gamma \alpha^3 \beta + \gamma \alpha$. Note that $a \neq 0$. In fact $a = \gamma \alpha(\alpha^2 \beta + 1)=0 $ implies $\alpha^2 \beta =-1$ whence $(\alpha^2 \beta)^2 =(-1)^2$ which simplifies to $-1=1$, contradiction, as $q$ is odd.\\
Finally, if $ad-bc=0$, then $a = (3ab^2-a)/2$. As $a  \neq 0$, this implies $b^2=1$, contradiction as $b \neq \pm1$. 

\end{proof}

\begin{remark}
 Note that in Lemma \ref{lem7:l0inf_equiv_l_mu_2} the condition 
  $q\equiv  1 \pmod{12}$, $-1/2$ is a cube, and $-1/3$ is a fourth power
is not empty; the examples are $ q = 121, 157, 397, 433, 529, 601, 625$, $961, 997, \ldots$
For $q = 121$ we actually tested by Magma that $\Lc$ and $\ell_{-1/3}$ give the same orbit. 
\end{remark} 
\begin{theorem}\label{th7:l0inf_equiv_l_mu}
The line $\Lc$ and a line $\ell_\mu$ belong to the same orbit of $G_q$ if and only if $q$ is odd and $\mu= -1/3$,  $q\equiv  -1 \pmod{12}$ and $\mu$ is not a square, or  $q\equiv  1 \pmod{12}$ and $\mu$ is a square  and $1/2$ is a cube and $-1/3$ is a fourth power.
\end{theorem}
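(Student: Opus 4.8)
The plan is to split on the residue of $q$ modulo $12$, after first clearing away the cases where $\Lc$ or $\ell_\mu$ is not an $\EnG$-line. If $q$ is even, Theorem~\ref{S5:th_numberorbits} already says that no $\ell_\mu$-line lies in $\Os_\Lc$, while every condition on the right-hand side forces $q$ odd, so the equivalence holds vacuously. If $q\equiv0\pmod3$, then $\Lc$ lies in the osculating plane $\pi_\t{osc}(1)$ (the remark after Lemma~\ref{lem3:l0infInEnG}) and so is not an $\EnG$-line, whereas $\ell_\mu$ is an $\EnG$-line by Lemma~\ref{lem4:EnGEllMu}(ii); since $G_q$ preserves the class $\OO_{\EnG}$, the two lines cannot be $G_q$-equivalent, and none of the right-hand conditions can hold (they force $q\equiv\pm1\pmod{12}$, hence $q\not\equiv0\pmod3$). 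From now on assume $q$ odd, $q\not\equiv0\pmod3$; note also $\Lc\ne\ell_\mu$ always, since the coordinate vectors $(0,-1,0,0,0,1)$ and $(\mu,0,1,-\mu,0,1)$ are never proportional (look at the second entry).

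Next I would use stabilizer orders to pin down which $\ell_\mu$ can possibly be equivalent to $\Lc$. If $\Lc$ and $\ell_\mu$ lie in a common $G_q$-orbit, their stabilizers are conjugate, hence $\#G_q^{\Lc}=\#G_q^{\ell_\mu}$. By Theorem~\ref{th3:main}, $\#G_q^{\Lc}=2$ when $q\equiv-1\pmod3$, and $\#G_q^{\Lc}\in\{3,12\}$ when $q\equiv1\pmod3$, the value $12$ occurring exactly when $-1/2$ is a cube. By Theorem~\ref{th7:stab_l_mu_qodd}, $\#G_q^{\ell_\mu}=2$ when $\mu$ is a non-square, $\#G_q^{\ell_\mu}=4$ when $\mu$ is a square outside the exceptional case, and $\#G_q^{\ell_\mu}=12$ precisely when $\mu=-1/3$, $q\equiv1\pmod{12}$ and $-1/3$ is a fourth power. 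Matching the orders: for $q\equiv-1\pmod3$ the equality forces $\mu$ to be a non-square; for $q\equiv1\pmod3$ it forces $\#G_q^{\Lc}=\#G_q^{\ell_\mu}=12$, whence $-1/2$ (equivalently $1/2$, since $-1=(-1)^3$ is a cube) is a cube, $q\equiv1\pmod{12}$, $-1/3$ is a fourth power, and $\mu=-1/3$, which is then a square. In both subcases the only candidate is $\mu=-1/3$, with the arithmetic constraints just listed; in particular, when $\#G_q^{\Lc}=3$, i.e. $q\equiv1\pmod3$ and $-1/2$ is not a cube, no $\ell_\mu$ is equivalent to $\Lc$, consistent with the right-hand side.

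It then remains to decide, for $\mu=-1/3$ under those constraints, whether $\Lc$ and $\ell_{-1/3}$ are genuinely $G_q$-equivalent, and this is exactly the content of the two preceding lemmas. If $q\equiv-1\pmod3$ then $-3$, hence $-1/3$, is a non-square by \cite[Section~1.5(xi)]{Hirs_PGFF}, so Lemma~\ref{lem7:l0inf_equiv_l_mu} applies and gives $\Lc$ and $\ell_{-1/3}$ in the same orbit iff $q\equiv-1\pmod{12}$; this is the first branch of the statement. If $q\equiv1\pmod3$, then under $q\equiv1\pmod{12}$, $1/2$ a cube, $-1/3$ a fourth power, Lemma~\ref{lem7:l0inf_equiv_l_mu_2} shows $-1/3$ is a square and $\Lc$, $\ell_{-1/3}$ are in the same orbit; this is the second branch. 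Conversely, whenever the conditions of either branch hold, the relevant lemma produces the equivalence directly, and the square/non-square status of $-1/3$ recorded in each branch is forced by $q\bmod12$ via \cite[Section~1.5(ix),(xi)]{Hirs_PGFF}. This exhausts all residues $q\equiv1,5,7,11\pmod{12}$ and finishes the proof.

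The main obstacle is making the modulo-$12$ case analysis airtight: one must check that the stabilizer-order comparison in the second step really forces $\mu=-1/3$ in every potentially equivalent configuration — in particular ruling out, for $q\equiv1\pmod3$ and $\mu$ a square, the generic stabilizer $C_2\times C_2$ of order $4$ against the orders $3$ and $12$ available to $\Lc$, and ruling out $\mu$ a square when $q\equiv-1\pmod3$ — and that the square versus non-square status of $-1/3$ deduced from $q\bmod12$ is exactly compatible with the hypotheses of Lemmas~\ref{lem7:l0inf_equiv_l_mu} and \ref{lem7:l0inf_equiv_l_mu_2}, so that no residue class is either left uncovered or claimed twice.
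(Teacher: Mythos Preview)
Your proof is correct and follows essentially the same strategy as the paper: rule out cases by comparing the stabilizer groups of $\Lc$ and $\ell_\mu$ via Theorems~\ref{th3:main}, \ref{S5:stab}, \ref{th7:stab_l_mu_qodd}, and then invoke Lemmas~\ref{lem7:l0inf_equiv_l_mu} and~\ref{lem7:l0inf_equiv_l_mu_2} for the surviving cases $\mu=-1/3$. Your write-up is in fact somewhat more careful than the paper's own terse proof, since you explicitly dispose of the residue classes $q\equiv5,7\pmod{12}$ and the case $q\equiv1\pmod3$, $-1/2$ a cube, $\mu$ a non-square, which the paper leaves implicit.
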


\begin{proof}
If $q$ is even,
or $q$ is odd and $q\equiv  1 \pmod3$ and $-1/2$ is not a cube,
or $\mu$ is a square and $\mu \neq -1/3$,
or $\mu$ is a square and $\mu = -1/3$ and  $q \not\equiv  1 \pmod {12}$,
or $\mu$ is a square and $\mu = -1/3$ and  $q \equiv  1 \pmod {12}$ and $-1/3$ is not a fourth power,
by  Theorems \eqref{th3:main}, \eqref{S5:stab} and \eqref{th7:stab_l_mu_qodd}, the line $\Lc$ and a line $\ell_\mu$ have different stabilizer groups, so they cannot belong to the same orbit. 

 Then apply Lemmas \eqref{lem7:l0inf_equiv_l_mu} and \eqref{lem7:l0inf_equiv_l_mu_2}.
\end{proof}

\end{document}